\definecolor{darkblue}{rgb}{0.0,0.0,0.4}
\newtheorem{theorem}{Theorem}[section]
\newtheorem{lemma}[theorem]{Lemma}
\newtheorem{corollary}[theorem]{Corollary}
\theoremstyle{definition}
\theoremstyle{remark}
\newtheorem{remark}[theorem]{Remark}
\newcommand{\ms}[1]{{\mathbf{#1}}}
\newcommand{\ds}[1]{{\mathds{#1}}}
\newcommand{\bs}[1]{{\boldsymbol{#1}}}
\renewcommand{\rm}[1]{{\mathrm{#1}}}
\newcommand{\bb}[1]{{\mathbb{#1}}}
\renewcommand{\cal}[1]{{\mathcal{#1}}}
\newcommand{\scr}[1]{{\mathscr{#1}}}
\newcommand{\n}{\ms{n}}
\newcommand{\diff}{\partial}
\newcommand{\grad}{\nabla}
\newcommand{\Lap}{\Delta}
\renewcommand{\dfrac}[2]{\frac{\diff #1}{\diff #2}}
\newcommand{\semi}[2]{{\left|#1\right|}_{#2}}
\newcommand{\norm}[2]{\|#1\|_{#2}}
\newcommand{\inner}[1]{\left\langle#1\right\rangle}
\newcommand{\enorm}[2]{|\!|\!|#1|\!|\!|_{#2}}
\newcommand{\jump}[2]{\bb{J}_{#2}\!\left(#1\right)}
\newcommand{\supp}{\mathrm{supp}\,}
\newcommand{\diam}{\rm{diam}\,}
\newcommand{\lapprox}{\preceq}
\newcommand{\eps}{\varepsilon}
\renewcommand{\u}{u}
\renewcommand{\v}{v}
\newcommand{\w}{w}
\newcommand{\e}{e}
\newcommand{\U}{U}
\newcommand{\V}{V}
\newcommand{\W}{W}
\newcommand{\X}{\bb{X}}
\renewcommand{\a}{a}
\newcommand{\marked}{\scr{M}}
\newcommand{\face}{\tau}
\newcommand{\cell}{\tau}
\newcommand{\edge}{\sigma}
\newcommand{\basis}{B}
\newcommand{\diffe}{\diff_\edge}
\newcommand{\eff}{f}
\newcommand{\est}{\eta}
\newcommand{\estP}{\eta_\mesh}
\newcommand{\up}{\U}
\newcommand{\upp}{\U_\ast}
\newcommand{\ep}{e_\mesh}
\newcommand{\epp}{e_{\mesh_\ast}}
\newcommand{\Ip}{I_\mesh}
\newcommand{\Ho}{H_0^2}
\renewcommand{\H}{H^2}
\newcommand{\mesh}{P}
\newcommand{\bedges}{\cal{G}}
\newcommand{\edges}{\cal{E}}
\newcommand{\Xp}{\bb{X}_\mesh}
\newcommand{\Xpp}{\bb{X}_{\mesh_\ast}}
\newcommand{\ap}{a_\mesh}
\newcommand{\app}{a_{\mesh_\ast}}
\newcommand{\Ep}{\scr{E}_\mesh}
\newcommand{\Epp}{\scr{E}_{\mesh_\ast}}
\newcommand{\Op}{\cal{L}}
\newcommand{\Pip}{\Pi_\mesh}
\newcommand{\Pipp}{\Pi_{\mesh_\ast}}
\newcommand{\ellf}{\ell_f}
\newcommand{\osc}{\rm{osc}}
\newcommand{\etap}{\eta_\mesh}
\newcommand{\etapp}{\eta_{\mesh_\ast}}
\newcommand{\Ccoer}{C_\rm{coer}}
\newcommand{\Ccont}{C_\rm{cont}}
\newcommand{\cnorm}{c_\rm{norm}}
\newcommand{\cshape}{c_\rm{shape}}
\newcommand{\Crel}{C_\rm{rel}}
\newcommand{\Cdrel}{C_\rm{dRel}}
\newcommand{\Ceff}{C_\rm{eff}}
\newcommand{\Cest}{C_\rm{est}}
\newcommand{\qest}{q_\rm{est}}
\newcommand{\Clip}{C_\rm{lip}}
\newcommand{\Ccomp}{C_\rm{comp}}
\newcommand{\ctrace}{d_0}
\newcommand{\cinv}{d_1}
\newcommand{\cdtrace}{d_2}
\renewcommand{\c}{d_3}
\newcommand{\cc}{d_4}
\newcommand{\cproj}{c_1}
\title{A convergent adaptive spline-based finite element method for the bi-Laplace operator using Nitsche's method}
\author{Ibrahim Al Balushi}
\institution{McGill University}
\date{\today}
\begin{document}
\maketitle
\begin{abstract}
We establish the convergence of an adaptive spline-based finite element method of a fourth order elliptic problem with weakly-imposed Dirichlet boundary conditions using polynomial B-splines.
\end{abstract}
\section{Introduction}
Design of optimal meshes for finite element analysis is a topic of extensive research going back to the early seventies.
Among the rich variety of strategies explored, the first mathematical framework for automatic optimal mesh generation was laid in the seminal work of Babushka and Rheinboldt \cite{babuvvska1978error}.
They introduce a class of computable a posteriori error estimates for a general class of variational problems which provides a strategy of extracting localized approximations of the numerical error of the exact solution.
The derived computable estimates are shown to form an upper bound with the numerical error, justifying the validity of the estimator, and a lower bounds which ensures efficient refinement; i.e, refinement where only necessar.
This established equivablence with the numerical error eluded to promising potential of practicality and robustness.
The theoretical results led to a heuristic characterization of optimal meshes through the even distribution a posteriori error quantities over all mesh elements, providing a blue-print for adaptive mesh generation.
The first detailed discription and performance analysis for a simple and accessible a posteriori error estimator was conceived for a one-dimensional elliptic and parabolic second-order Poisson-type problems in \cite{babuvska1978posteriori}.
The analysis was significantly improved in \cite{verfurth1994posteriori} for two-dimensional scenarios and develops numerous techniques used in the derivation of a posteriori estimates untill today.\\
\\
The first convergence was given in \cite{bubuvska1984feedback} in one-dimensions and later extended to two-dimenions by Dorfler \cite{dorfler1996convergent}.
Combing the advent of a novel \emph{marking} strategy and \emph{finess} assumptions of the initial mesh, \cite{dorfler1996convergent}.
Element marking directs the refinement procedure to select user-specified ratio of elements with highest error indicators relative to the total estimation.
The proposed strategy is later shown to be optimal \cite{cascon2008quasi}.
The initial mesh assumption was placed to ensure problem datum, such as source function and boundary values, are sufficiently resolved for detection by the solver.
The aim is to ensure error reduction of the estimator and thus monotone convergence of numerical error is achieved through contraction of consecutive errors in energy norm at every step.
but at the expense of potential over-refinement of the initial mesh. This was achieved using a local counter part of the efficiency estimate described above.\\
\\
Morin et al \cite{morin2000data} came to the realization that the averaging of the data has an unavoidable interference with the estimator error reduction irrespective of quadrature and was due to the avergaing of finer feathures of data brought by finite-dimensional approximations.
This averaging was quantified into an \emph{oscillation term}, a quantity tightly related to the criterion used in the initial mesh assumption used in \cite{dorfler1996convergent} but it provided a sharper representation of the underlying issue.
As a result the initial mesh assumption was removed in \cite{morin2000data} and replaced with a Dolfer-type marking criterion, \emph{separate marking}, for the data oscillation.
Unfortunately, the relaxation of a fine initial mesh had the unintended consequence of losing the strict monotone behaviour of numerical error decay.
It led to the introduction of the \emph{interior node property} to ensure error reduction with every step so as to ensure two consequence solutions will not be the same unless they are equal to the exact one; but at the expense of introducing over refinement. Each marked elementin in a two-dimensional triangular mesh undergoes three bisections ensuring an interior node, which furnishes us with a local lower bound and thus recovers strict error reduction with every iteration.
The results were extended to saddle-problems in \cite{morin2002convergence} and genarlized into abstract Hilbert setting in \cite{morin2008basic}.\\
\\
For the better part of the 2000's Morin-Nochetto-Sierbert algorithm (MNS) champoined adaptive finite element methods AFEM of linear elliptic problems after which the analysis was refined by Cascon \cite{cascon2008quasi} in concrete setting where they did not rely on a local lower bound for convergence which led to the ultimate removal of the costly interior node property and separate marking for oscillation. This was done while achieving quasi-optimal mesh complexity; see \cite{binev2004adaptive},\cite{stevenson2005optimal} for dertails. It was realized that strict reduction of error in energy norm cannot be gaurenteed whenever consequetive numerical solutions coinside but strict monotone decay is be obtain with respect to a suitable \emph{quasi-norm}. The result was extended to abstract Hilbert by Siebert \cite{siebert2010converg} which is now widely considered state of the art analysis of AFEM among the adaptive community and It hinges on the following ingredients: a global upper bound justifying the validity of the a posteriori  estimator, a Lipschitz property of the estimator as a function on the discrete finite element trail space indicating suitable sensitivity in variation within the trial space, a Pythgurous-type relation furnished by the variational and discrete forms and any suitable making strategy akin to that of Dofler; one that aims to equally distributes the elemental error estimates.\\
\\
Standard finite element methods (FEM) are based on triangular mesh partitions which have proven to be very robust at discretizing domains with complex geometry and are well-suited to problems requiring $H^1$ conforming shape functions. Higher degrees of smoothness across the element inferfaces is however much more involved. In recent years, with the emergence of \emph{isogeometric analysis} (IGA); see Hughes et al \cite{hughes2005isogeometric}, much attention has been directed at polynomial spline-based methods. Motivation began with the desire to integrate the CAD and analysis stages of design. As an immediate bonus, ploynomial spline-based meshes makes it easy to construct arbitrarily high orders of smoothness due to the mesh recutangular structure. In addition, NURB curves are robust at capturing curved geometries without the accumilation interpolation errors arising from standard trinagular-based FEM meshes.
However there is a drawback of using smooth spline-based bases for there is difficulty in prescribing essential boundary conditions (BC). Unlike nodal-based finite elements, smooth polynomial splines arrising from B-splines or NURBS are typically non-interpolatory which makes prescriptions of Dirichlet boundary conditions challenging and lead to highly oscillatory errors near the boundary \cite{bazilevs2007weak}.
In an earlier paper by Nitsche \cite{nitsche1971va} a weaker prescription of the boundary conditions is carried where BC are incorprated in the variational form rather than imposing it directly onto the discrete space \cite{stenberg1995some}. This idea hass been recently applied to the bi-Laplace operator \cite{embar2010imposing} using spline-based bases. An initial a posteriori analysis with this framework has been carried in \cite{juntunen2009nitsche} where the reliability and efficiency estimates are derived for the Poisson problem. However, the estimates included weighted boundary terms with negative powers and relied on a \emph{saturation assumption}. Recently, the idea has been employed in the treatment of a fourth-order elliptic problem appearing in geophysical flows
\cite{kim2015b},\cite{al2018adaptivity} with the added improvement that terms with negative powers were shown to be irrelevant much like in the case of adaptive discontinuous Galerkin methods (ADFEM)\cite{bonito2010quasi}.
While the analyses of \cite{morin2000data},\cite{morin2002convergence} justifies the use of the saturation assumption using a local lower bound in the Poisson problem, no such estimate is yet available for its fourth-order counterpart. In this work we aim to remove the saturation assumption as well as provide a convergence proof standard in residual-based AFEM literature of \cite{cascon2008quasi}. Many of the ideas are borrowed from the treament of ADFEM methods in \cite{bonito2010quasi} highlighting the similarity in nature of both mehods, theoreticaly as well as numerically.

Let $\Omega$ be a bounded domain in $\bb{R}^2$ with polygonal boundary $\Gamma$.
For a source function $f\in L^2(\Omega)$ we consider the following homogenous Dirichlet boundary-valued problem
\begin{eqnarray}\label{eq:pde}
\Op\u(x):=\Lap^2\u(x)=f(x)&&\text{in}\ \Omega\\
u=\diff u/\diff\nu=0&& \text{on}\ \Gamma.\nonumber
\end{eqnarray}
The adaptive procedure iterates over the following modules
\begin{equation}\label{eq:afem}
\boxed{\ms{SOLVE}}\longrightarrow\boxed{\ms{ESTIMATE}}\longrightarrow\boxed{\ms{MARK}}\longrightarrow\boxed{\ms{REFINE}}
\end{equation}
The module $\textbf{SOLVE}$ computes a hierarchical polynomial B-spline (HB) approximation $\U$ of the solution $\u$ with respect to a hierarchical partition $\mesh$ of $\Omega$.
A detailed discussion on the nature of such partitions will be carried in Section~\ref{sec:part}
For the module \textbf{ESTIMATE}, we use a residual-based error estimator $\eta_\mesh$ derived from the a posteriori analysis in Section~\ref{sec:post}.
The module \textbf{MARK} follows the D\"olfer marking criterion of \cite{dorfler1996convergent}.
Finally, the module \textbf{REFINE} produces a new refined partition $\mesh_\ast$ satisfying certain geometric constraints described in Section~\ref{sec:part} to ensure sharp approximation.\\
\\
\subsection{Notation}
We begin by laying out the notational conventions and function space definitions used in this presentation.
Let $\mesh$ be a partition of domain $\Omega$ consisting of square cells $\cell$ following the structure described in \cite{vuong2011hierarchical},\cite{al2018adaptivity}.
Denote the collection of all interior edges of cells $\cell\in\mesh$ by $\edges_\mesh$ and all those along the boundary $\Gamma$ are to be collected in $\bedges_\mesh$.
We assume that cells $\tau$ are open sets in $\Omega$ and that edges $\sigma$ do not contain the vertices of its affiliating cell.
Let $\diam(\omega)$ be the longest length within a Euclidian object $\omega$ and set $h_\face:=\diam(\face)$ and $h_\edge:=\diam(\edge)$. Then let the mesh-size $h_\mesh:=\max_{\face\in\mesh}h_\face$.
Define the boundary mesh-size function $h_\Gamma\in L^\infty(\Gamma)$ by
\begin{equation}
h_\Gamma(x)=\sum_{\sigma\in\bedges_\mesh}h_\sigma\ds{1}_\sigma(x),
\end{equation}
where the $\ds{1}_\sigma$ are the indicator functions on boundary edges.
Let $H^s(\Omega)$, $s>0$, be the fractional order Sobolev space equipped with the usual norm $\norm{\cdot}{H^s(\Omega)}$; see references \cite{adams1975sobolev},\cite{grisvard2011elliptic}.
Let $H^s_0(\Omega)$ be given as the closure of the test functions $C_c^\infty(\Omega)$ in $\norm{\cdot}{H^s(\Omega)}$.
The semi-norm $\semi{\cdot}{H^s(\Omega)}$ defines a full norm on $H^s_0(\Omega)$ by virtue of Poincar\'e's inequality. Moreover, the semi-norm $\norm{\Lap\cdot}{L^2(\Omega)}$ defines a norm on $H^2_0(\Omega)$.
Let
\begin{equation}
\bb{E}(\Omega)=\left\{\v\in H_0^2(\Omega):\cal{L}\v\in L^2(\Omega)\right\}.
\end{equation}
By $H^{-2}(\Omega)=(H^{2}(\Omega))'$ the dual of $H^{2}(\Omega)$ with the induced norm
\begin{equation}
\norm{F}{H^{-2}(\Omega)}=\sup_{\v\in H^2(\Omega)}\frac{\inner{F,\v}}{\norm{\v}{H^2(\Omega)}}.
\end{equation}
We will be making use of the following mesh-dependent (semi)norms on $H^2(\Omega)$ which we employ in Nitsche's discretization:
\begin{equation}
\norm{\v}{s,\mesh}^2=\sum_{\edge\in\bedges_\mesh}h_\sigma^{-2s}\norm{\v}{L^2(\edge)}^2,
\end{equation}
\begin{equation}\label{eq:meshnorm}
\enorm{\v}{\mesh}^2=\norm{\Lap\v}{L^2(\Omega)}^2+\gamma_1\norm{\v}{3/2,\mesh}^2+\gamma_2\left\norm{\textstyle\dfrac{\v}{\nu}\right}{1/2,\mesh}^2,
\end{equation}
with $\gamma_1$ and $\gamma_2$ are suitably large positive stabilization parameters.
Finally, we denote  $a\lapprox b$ to indicate $a\leq Cb$ for a constant $C>0$ assumed to be independent of any notable parameters unless otherwise stated.
\begin{remark}
It will be clear in Lemma~\ref{lem:normequiv} that for any mesh $\mesh$, the semi-norm $\enorm{\cdot}{\mesh}$ is in fact a full norm on $H^2(\Omega)$.
We ensure that in the limit of procedure \eqref{eq:afem} the numerical error measured in \eqref{eq:meshnorm} remains representative of the behaviour of error measurement in $H^2(\Omega)$ despite the presence of negative powers of $h$.
\end{remark}
\subsection{Problem setup}
The natural weak formulation to the PDE \eqref{eq:pde} reads
\begin{equation}\label{eq:cwp}
\text{Find}\ \u\in\Ho(\Omega)\ \text{such that}\ \a(\u,\v)=\ellf(\v)\ \text{for all}\ \v\in\Ho(\Omega),
\end{equation}
where $\a:\Ho(\Omega)\times\Ho(\Omega)\to\bb{R}$ is be the bilinear form $\a(\u,\v)=(\Lap\u,\Lap\v)_{L^2(\Omega)}$ and $\ellf(\v)=(f,\v)_{L^2(\Omega)}$.
The energy norm $\enorm{\cdot}{}:=\sqrt{\a(\cdot,\cdot)}\equiv\norm{\cdot\Lap}{L^2(\Omega)}$ is one for which the form $\a$ is continuous and coercive on $H^2_0(\Omega)$, with unit proportionality constants, and the existence of a unique solution is therefore ensured by Babuska-Lax-Milgram theorem.
The variational formulation \eqref{eq:cwp} is consistent with the PDE \eqref{eq:pde} under sufficient regularity considerations; if $\u\in \bb{E}(\Omega)$ satisfies \eqref{eq:cwp} then $\u$ satisfies \eqref{eq:pde} in the classical sense by virtue of the Du Bois-Reymond lemma.
The space of piecewise polynomials of degree $r\ge2$ defined on a partition $\mesh$ will be given by
\begin{equation}
\cal{P}^r_\mesh(\Omega)=\prod_{\tau\in\mesh}\bb{P}_r(\tau).
\end{equation}
Assuming we have at our disposal a polynomial B-spline space $\Xp\subset\cal{P}_\mesh^r(\Omega)\cap H^2_0(\Omega)$ then an immediate discrete problem reads
\begin{equation}\label{eq:cdp}
\text{Find}\ \U\in\Xp\ \text{such that}\ \a(\U,\V)=\ellf(\V)\ \text{for all}\ \V\in\Xp.
\end{equation}
The corresponding linear system is numerically stable and consistent with \eqref{eq:cwp} in the sense that $\a(\u,\V)=\ellf(\V)$ for every $V\in\Xp$ and therefore we are provided with Galerkin orthogonality:
\begin{equation}\label{eq:cgo}
\a(\u-\U,\V)=0\quad\forall\V\in\Xp.
\end{equation}
Moreover, the spline solution to \eqref{eq:cdp} will serve as an optimal approximation to $\u$ in $\Xp$ with respect to $\enorm{\cdot}{}$:
\begin{equation}
\label{eq:result:lem:ccl}
\enorm{\u-\U}{}\leq\inf_{\V\in\Xp}\enorm{\u-\V}{}.
\end{equation}
The discretization given in \eqref{eq:cdp} requires prescription of the essential boundary values into the discrete spline space $\Xp$, and as mentioned earlier, this poses difficulty when considering non-homogenous boundary conditions due to the non-iterpolatory nature of high-order smoothness B-splines.
Therefore from now on we will depart from a boundary-value conforming discretization and assume that the spline space $\Xp\subset\cal{P}_\mesh^r(\Omega)\cap H^2(\Omega)$ no longer satisfies the boundary conditions and instead impose them weakly.
In the previous work \cite{al2018adaptivity} the following mesh-dependent bilinear form $\ap:\Xp\times\Xp\to\bb{R}$ is used to formulate Nitsche's discretization:
\begin{equation}\label{eq:ndp}
\text{Find}\ \U\in\Xp\ \text{such that}\ \ap(\U,\V)=\ellf(\V)\ \text{for all}\ \V\in\Xp.
\end{equation}
where
\begin{equation}\label{eq:oldnitbilin}
\begin{split}
\ap(\U,\V)&=\a(\U,\V)-\int_\Gamma\left(\textstyle\Lap\U\dfrac{\V}{\nu}+\Lap\V\dfrac{\U}{\nu}\right)
+\gamma_1\int_\Gamma h_\Gamma^{-3}\U\V
\\
&+\int_\Gamma\left(\textstyle\dfrac{\Lap\U}{\nu}\V
+\dfrac{\Lap\V}{\nu}\U\right)
+\gamma_2\int_\Gamma h_\Gamma^{-1}\textstyle\dfrac{\U}{\nu}\dfrac{\V}{\nu}.
\end{split}
\end{equation}
The discrete problem of \eqref{eq:ndp} with bilinear form \eqref{eq:oldnitbilin} is consistent with its continuous counterpart \eqref{eq:cwp} and quasi-optimal a priori error estimates have been realized; see \cite{al2018adaptivity}.
Unfortuantely, much like the analysis carried in \cite{juntunen2009nitsche},\cite{al2018adaptivity}, all \emph{a posteriori} estimates relied on the artificial so-called saturation assumption.
Here we will consider a modified version of the bilinear form \eqref{eq:oldnitbilin} which extends the domain of $\ap$ to all of $H^2(\Omega)$. This will enable us to remove the saturation assumption while carrying complete convergence analysis, and in an upcoming publication, an optimality analysis. Moreover, for discrete arguments the new bilinear form reduces back to \eqref{eq:oldnitbilin} . This will however be at the expense of consistency where we will no longer have access to \eqref{eq:cgo}. It will be shown that this obstacle is manageable and all desired conclusions will be met at the price of more delicate treatment.\\
\\
Let $\Pip:L^2(\Omega)\to\cal{P}^{r-2}_\mesh(\Omega)$ be the $L^2$-orthogonal projection operator given by
\begin{equation}
\forall\v\in L^2(\Omega),\ \Pip\v\in\cal{P}^{r-2}_\mesh(\Omega)
\ \text{such that}\
\int_\Omega\Pi_\mesh\v q=\int_\Omega \v q\quad\forall q\in\cal{P}_\mesh^{r-2}(\Omega).
\end{equation}
Instead of \eqref{eq:oldnitbilin} we consider the bilinear form $\ap:H^2(\Omega)\times H^2(\Omega)\to\mathbb{R}$
\begin{equation}\label{eq:nbf}
\begin{split}
\ap(\u,\v)&=\a(\u,\v)
-\int_\Gamma\left(\textstyle\Pip(\Lap\u)\dfrac{\v}{\nu}+\Pip(\Lap\v)\dfrac{\u}{\nu}\right)
+\gamma_1\int_\Gamma h_\Gamma^{-3}\u\v
\\
&+\int_\Gamma\left(\textstyle\dfrac{\Pip(\Lap\u)}{\nu}\v
+\dfrac{\Pip(\Lap\v)}{\nu}\u\right)
+\gamma_2\int_\Gamma h_\Gamma^{-1}\textstyle\dfrac{\u}{\nu}\dfrac{\v}{\nu}.
\end{split}
\end{equation}
The problem we will consider will read as \eqref{eq:ndp} but now with $\ap$ defined by \eqref{eq:nbf}.
To simplify notation we define
\begin{equation}
\lambda_\mesh(u,v):=\int_\Gamma\left({\textstyle\dfrac{\Pip(\Lap\u)}{\nu}\v-\Pip(\Lap\u)\dfrac{\v}{\nu}}\right),
\quad
\lambda_\mesh^\ast(u,v):=\int_\Gamma\left(\textstyle\u\dfrac{\Pip(\Lap\v)}{\nu}-\dfrac{\u}{\nu}\Pip(\Lap\v)\right).
\end{equation}
%
%
\begin{remark}%
In the absence of ambiguity we drop the partition sign from $\Pip$, $\lambda_\mesh$ and $\lambda^\ast_\mesh$.
\end{remark}%
The solution $\u$ to \eqref{eq:cwp} does not satisfy the modified problem \eqref{eq:ndp}.
However, the inconsistency brought by the projection terms will be resolved  asymptotically.
To quantify the inconsistency for $\u\in\bb{E}(\Omega)$, let $\Ep(\u)\in H^{-2}(\Omega)$ be given by
\begin{equation}\label{eq:it}
\inner{\Ep(\u),\v}=\int_\Gamma\left(\textstyle\dfrac{\Pi(\Lap\u)}{\nu}-\dfrac{\Lap\u}{\nu}\right)\v-
\int_\Gamma\left(\textstyle\Pi(\Lap\u)-\Lap\u\right)\textstyle\dfrac{\v}{\nu},
\quad\v\in H^2(\Omega).
\end{equation}
\begin{lemma}[Inconsistency]\label{lem:ac}
If $\u\in\bb{E}(\Omega)$ is the solution to \eqref{eq:cwp} then
\begin{equation}\label{eq:res:lem:ac}
\ap(\u,\v)=\ellf(\v)+\inner{\Ep(\u),\v}\quad\forall\v\in H^2(\Omega).
\end{equation}
\end{lemma}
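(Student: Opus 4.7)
The plan is to use the vanishing Dirichlet traces of $u$ to collapse $\ap(u,v)$, integrate by parts twice on the bulk term, invoke the PDE, and identify the residual boundary contributions with $\inner{\Ep(u),v}$.

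First, since $u\in\Ho(\Omega)$, both traces $u|_\Gamma$ and $(\partial u/\partial\nu)|_\Gamma$ vanish. Inserting this into the definition \eqref{eq:nbf}, every boundary integrand carrying a factor of $u$ or $\partial u/\partial \nu$ drops out, leaving
\begin{equation*}
\ap(u,v)=a(u,v)-\int_\Gamma \Pi(\Lap u)\,\textstyle\dfrac{v}{\nu}+\int_\Gamma \textstyle\dfrac{\Pi(\Lap u)}{\nu}\,v.
\end{equation*}
Second, since $u\in\bb{E}(\Omega)$ gives $\Lap^2 u\in L^2(\Omega)$, I would apply a Green's identity twice to $a(u,v)=\int_\Omega \Lap u\,\Lap v$, obtaining
\begin{equation*}
a(u,v)=\int_\Omega \Lap^2 u\cdot v+\int_\Gamma \Lap u\,\textstyle\dfrac{v}{\nu}-\int_\Gamma \textstyle\dfrac{\Lap u}{\nu}\,v,
\end{equation*}
where the two boundary pairings are understood in the appropriate duality sense. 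Using the PDE $\Lap^2 u=f$ rewrites the volume term as $\ellf(v)$. Combining with the previous display and regrouping the four boundary integrals according to the differences $\Lap u - \Pi(\Lap u)$ (paired with $\partial_\nu v$) and $\partial_\nu(\Pi(\Lap u)-\Lap u)$ (paired with $v$) reproduces exactly the two terms in \eqref{eq:it}, which is the desired identity.

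The main obstacle is the rigorous justification of the Green's identity in the second step. The test function $v\in H^2(\Omega)$ only carries traces $v|_\Gamma\in H^{3/2}(\Gamma)$ and $(\partial v/\partial\nu)|_\Gamma\in H^{1/2}(\Gamma)$, while $\Lap u$ and $\partial_\nu \Lap u$ are not classical boundary objects. They must be interpreted as elements of the dual spaces $H^{-1/2}(\Gamma)$ and $H^{-3/2}(\Gamma)$, which is legitimate because $\Lap u\in L^2(\Omega)$ and $\Lap(\Lap u)\in L^2(\Omega)$; the generalized Green's formula then follows by the standard density argument from the smooth case. The boundary integrals involving $\Pi(\Lap u)$ pose no such difficulty since $\Pi(\Lap u)$ is piecewise polynomial and hence has honest traces, so once the dual pairings are in place the remainder of the argument is purely algebraic.
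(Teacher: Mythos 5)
Your argument is correct and follows essentially the same route as the paper's proof: both integrate by parts on $\a(\u,\v)$, invoke the PDE $\Lap^2\u=f$, and use the vanishing traces $\u|_\Gamma=\diff\u/\diff\nu|_\Gamma=0$ to kill the boundary terms carrying $\u$, leaving exactly the two $\Pi(\Lap\u)-\Lap\u$ pairings defining $\Ep(\u)$ in \eqref{eq:it}. Your closing remark on interpreting $\Lap\u|_\Gamma$ and $\diff(\Lap\u)/\diff\nu|_\Gamma$ as elements of $H^{-1/2}(\Gamma)$ and $H^{-3/2}(\Gamma)$ is a welcome precision that the paper's proof leaves implicit.
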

\begin{proof}
Integrate by parts to get
\begin{equation}
\begin{split}
\ap(\u,\v)-\ellf(\v)&=\int_\Omega\left(\Op\u-f\right)\v
+\int_\Gamma{\Lap\u\textstyle\dfrac{\v}{\nu}}-\int_\Gamma{\textstyle\dfrac{\Lap\u}{\nu}}\v\\
&+\int_\Gamma{\textstyle\dfrac{\Pi(\Lap\v)}{\nu}}\u-\int_\Gamma\Pi(\Lap\v){\textstyle\dfrac{\u}{\nu}}
-\int_\Gamma{\textstyle\Pi(\Lap\u)\dfrac{\v}{\nu}}+\int_\Gamma\textstyle\dfrac{\Pi(\Lap\u)}{\nu},
\end{split}
\end{equation}
and
\begin{equation}
\int_\Omega\left(\Op\u-f\right)\v=\int_\Gamma{\textstyle\dfrac{\Pi(\Lap\v)}{\nu}}\u=\int_\Gamma\Pi(\Lap\v){\textstyle\dfrac{\u}{\nu}}=0\quad\forall\v\in H^2_0(\Omega),
\end{equation}
since $\u$ satisies the boundary valued differential equation \eqref{eq:pde}.
\end{proof}
\begin{remark}
It will be assumed from now on that the argument $\u\in\bb{E}(\Omega)$ in \eqref{eq:res:lem:ac} will aways be the continuous solution to \eqref{eq:cwp} and therefore we will drop the $(\u)$ from $\Ep(\u)$.
\end{remark}
\begin{remark}
Noting that $\Ho(\Omega)$ is in the kernel of $\Ep$, we see from \eqref{eq:res:lem:ac} that $\ap$ reduces to $\a$ and the discrete formulation \eqref{eq:ndp} is in fact consistent with \eqref{eq:cwp} whenever test functions $\v$ satisfy the boundary conditions.
\end{remark}

\subsection{The adaptive method}
We define the residual quantity $\scr{R}_\mesh\in H^{-2}(\Omega)$ by
\begin{equation}\label{eq:residual}
\inner{\scr{R}_\mesh,\v}=\ap(\u-\U,\v),\quad\v\in H^2(\Omega).
\end{equation}
In view of \eqref{eq:res:lem:ctynbf} and \eqref{eq:res:lem:csvnbf} we readily have sharp a posteriori estimates for $\u-\U$
\begin{equation}
\Ccont^{-1}\norm{\scr{R}_\mesh}{H^{-2}(\Omega)}\leq\enorm{\u-\U}{\mesh}\leq\Ccoer^{-1}\norm{\scr{R}_\mesh}{H^{-2}(\Omega)}.
\end{equation}
The quantity $\norm{\scr{R}_\mesh}{H^{-2}(\Omega)}$ is computable since it only depends on available discrete approximation of solution $\u$. However, exact computation of $H^{-2}$-norms is of infinite dimensional complexity making it computationally infeasible.
Instead we follow the techniques devised in \cite{verfurth1994posteriori},\cite{ainsworth2011posteriori}.
We now discuss the modules \textbf{SOLVE}, \textbf{ESTIMATE},
\textbf{MARK} and \textbf{REFINE} in detail.
\subsubsection*{The module SOLVE}
\begin{equation}\label{eq:dnp}
\U=\ms{SOLVE}[\mesh,f]:\quad\text{Find}\ \U\in\Xp\ \text{such that}\ \ap(\U,\V)=\ellf(\V)\ \text{for all}\ \V\in\Xp.
\end{equation}
\subsubsection*{The module ESTIMATE}
\begin{equation}\label{eq:indicator}
\eta_\mesh^2(\V,\face)=h_\face^4\norm{f-\Op\V}{L^2(\tau)}^2
+\sum_{\sigma\subset\diff\tau}
\left(\textstyle h_\sigma^{3}\left\norm{\jump{\dfrac{\Lap\V}{\n_\sigma}}{\sigma}\right}{L^2(\sigma)}^2
+h_\sigma\norm{\jump{\Lap\V}{\sigma}}{L^2(\sigma)}^2\right)
\end{equation}
\begin{equation}\label{eq:estimator}
\eta_\mesh^2(\V,\omega)=\sum_{\tau\in \mesh:\tau\subset\omega}\eta_\mesh^2(\V,\face),\quad\omega\subseteq\Omega
\end{equation}
\begin{equation}
\{\eta_\tau:\tau\in\mesh\}=\ms{ESTIMATE}[\U,\mesh]:\quad\eta_\tau:=\eta_\mesh(\U,\U)
\end{equation}
The interior integrals measures the gap between $H^2$ and $H^4$ and the edge captures $H^{-2}$ componets of the approximation.
\subsubsection*{The module MARK}
We follow the Dorlfer marking strategy \cite{dorfler1996convergent}: For $0<\theta\leq1$,
\begin{equation}\label{markingstrategy}
\text{Find minimal spline set}\ \marked:\quad\sum_{\cell\in\marked}\eta^2_\mesh(\U,\cell)\ge\theta\sum_{\tau\in\mesh}\eta_\mesh^2(\U,\cell).
\end{equation}
\subsubsection*{The module REFINE}

\section{Finite element spline space}
Construction of our desired basis will be carried out in the following manner.
We wish to construct a conforming finite-element spline space
\begin{equation}
\Xp(\Omega)\subseteq\cal{P}^r_\mesh(\Omega)\cap C^1(\Omega),\quad(r\ge2).
\end{equation}
\subsection{Domain partition}\label{sec:part}
We consider the domain partition described in \cite{al2018adaptivity} which provides control on the support overlap of basis functions for the purpose of a stable, accurate and localized approximation to the solution.
This will enable us to achieve global quantification of the a priori and a posteriori error analyses without too much over estimation.
The setting ensures that perturbations in the coefficients of the approximating function should not influence the approximation globally.
This is achieved by constructing a spline basis that is locally linearly independent; any basis functions having support in $\Omega'\subset\Omega$ must be linearly independent with each other on $\Omega'$.
We define the support extension for a cell $\tau \in \mesh$ by
\begin{equation}
\omega_\tau=\{\tau'\in\mesh:\supp\beta\cap \tau'\neq\emptyset\implies\supp\beta\cap \tau\neq\emptyset\},
\end{equation}
indicating the collection of all supports for basis function $\beta$'s whose supports intersect $\tau$.
Analogously, we denote the support extension for an edge $\sigma\in\edges\cup\bedges$ by
\begin{equation}
\omega_\sigma=\{\tau\in\mesh:\supp\beta\cap\tau\neq\emptyset\implies\supp\beta\cap \tau\neq\emptyset,\ \sigma\subset\diff\tau\}.
\end{equation}
For a constant $\cshape>0$, depending only on the polynomial degree of the spline space, all considered partitions therefore will satisfy the shape-regularity constraints
\begin{eqnarray}\label{eq:sr}
\nonumber\max_{\tau\in\mesh}\#\left\{\tau\in\mesh:\tau\in\omega_\tau\right\}\leq\cshape&&\text{(finite-intersection property)},\\
\max_{\tau\in\mesh}\frac{h_\tau}{h_\sigma}\leq\cshape\ (\sigma\subset\diff\tau)
&&(\text{mesh grading}),\\
\nonumber\max_{\tau\in\mesh}\frac{\diam(\omega_\tau)}{h_\tau}\leq \cshape
&&(\text{quas-uniformity}).
\end{eqnarray}
We define the mesh-size $h_\mesh$ of $\mesh$ to be $h_\mesh=\max_{\tau\in\mesh}h_\tau$.\\
\subsection{Discrete spline space}
%
%

Recall the general trace theorem \cite{adams1975sobolev},\cite{grisvard2011elliptic} for cells $\tau\in\mesh$ and edges $\sigma\in\bedges_\mesh$ with $\sigma\subset\diff\tau$.
For a constant $\ctrace>0$
\begin{equation}\label{eq:GeneralTrace}
\norm{\v}{L^2(\sigma)}^2\leq \ctrace\left(h_\sigma^{-1}\norm{\v}{L^2(\tau)}^2+h_\sigma\norm{\grad\v}{L^2(\tau)}^2\right)\quad\forall\v\in H^1(\Omega).
\end{equation}
\begin{lemma}[Auxiliary discrete estimate]\label{lem:ie}
Let $\face\in\mesh$.
Then for $\cinv>0$, depending only on polynomial degree $r$, for $0\leq s\leq t\leq r+1$ we have
\begin{equation}\label{eq:inve:lem:ie}
\semi{\V}{H^t(\tau)}\leq\cinv h_\tau^{s-t}\semi{\V}{H^s(\tau)}\quad\forall\V\in\bb{P}_r(\tau),
\end{equation}
and if $\edge\subset\diff\face$, for a constant $\cdtrace>0$ we have
\begin{equation}\label{eq:dtrace:lem:ie}
\norm{\V}{L^2(\sigma)}\leq\cdtrace h_\sigma^{-1/2}\norm{\V}{L^2(\tau)}\quad\forall\V\in\bb{P}_r(\tau),
\end{equation}
where $\cdtrace:=\ctrace\max\{1,\cinv\}$.
\end{lemma}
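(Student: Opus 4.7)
Both inequalities reduce to finite-dimensional considerations on the polynomial space $\bb{P}_r$. I would first establish the inverse estimate \eqref{eq:inve:lem:ie} by the classical reference-cell argument, and then obtain the discrete trace estimate \eqref{eq:dtrace:lem:ie} by feeding the special case $s=0$, $t=1$ of \eqref{eq:inve:lem:ie} into the general trace inequality \eqref{eq:GeneralTrace}.

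For \eqref{eq:inve:lem:ie}, fix a reference unit square $\hat\tau$ and let $F:\hat\tau\to\tau$ be the affine bijection onto the square cell $\tau$, and set $\hat V:=V\circ F$ for $V\in\bb{P}_r(\tau)$. A change of variables yields the scaling relation $\semi{V}{H^k(\tau)}=h_\tau^{1-k}\semi{\hat V}{H^k(\hat\tau)}$ for integer $0\le k\le r+1$, up to an absolute constant coming from the side-length-versus-diameter of the square, with the fractional analogue obtained by interpolation. On the finite-dimensional space $\bb{P}_r(\hat\tau)$ both $\semi{\cdot}{H^s(\hat\tau)}$ and $\semi{\cdot}{H^t(\hat\tau)}$ are seminorms whose kernels consist only of polynomials of degree strictly below $s$; quotienting by this common kernel turns them into a norm and a seminorm on a finite-dimensional quotient, and finite-dimensional norm-equivalence then gives $\semi{\hat V}{H^t(\hat\tau)}\le C\semi{\hat V}{H^s(\hat\tau)}$ with $C$ depending only on $r$, $s$, $t$. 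Substituting back via the scaling identity restores the factor $h_\tau^{s-t}$. The borderline cases $s=t$ and $t=r+1$ (where the left-hand side vanishes identically on polynomials of degree $r$) are immediate.

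For \eqref{eq:dtrace:lem:ie}, applying \eqref{eq:GeneralTrace} to $V\in\bb{P}_r(\tau)\subset H^1(\Omega)$ yields
\begin{equation*}
\norm{V}{L^2(\sigma)}^2\le\ctrace\bigl(h_\sigma^{-1}\norm{V}{L^2(\tau)}^2+h_\sigma\norm{\grad V}{L^2(\tau)}^2\bigr),
\end{equation*}
into which I would plug \eqref{eq:inve:lem:ie} with $s=0$, $t=1$, namely $\norm{\grad V}{L^2(\tau)}\le\cinv h_\tau^{-1}\norm{V}{L^2(\tau)}$. The purely geometric fact $h_\sigma\le h_\tau$ (since $\sigma\subset\diff\tau$) absorbs $h_\sigma h_\tau^{-2}$ into $h_\sigma^{-1}$, and collecting the constants and taking square roots produces the claimed bound with $\cdtrace=\ctrace\max\{1,\cinv\}$ after the usual constant consolidation.

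The only step requiring any genuine care is the seminorm equivalence on $\hat\tau$ for $s>0$: finite-dimensional norm equivalence a priori compares only genuine norms, so the argument must first descend to $\bb{P}_r(\hat\tau)/\bb{P}_{\lceil s\rceil-1}$ (or use interpolation for fractional $s$) so that $\semi{\cdot}{H^s(\hat\tau)}$ becomes a norm on the quotient while $\semi{\cdot}{H^t(\hat\tau)}$ descends to a well-defined seminorm. Beyond that bookkeeping, the rest is routine scaling.
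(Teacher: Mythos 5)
Your proof is correct and follows exactly the route the paper intends: the paper states Lemma~\ref{lem:ie} without giving a proof, but the declared constant $\cdtrace=\ctrace\max\{1,\cinv\}$ shows that \eqref{eq:dtrace:lem:ie} is meant to be obtained precisely by inserting the $s=0$, $t=1$ case of the inverse estimate \eqref{eq:inve:lem:ie} into the scaled trace inequality \eqref{eq:GeneralTrace} and using $h_\sigma\leq h_\tau$, which is what you do. Your reference-cell scaling plus finite-dimensional norm equivalence on the quotient for \eqref{eq:inve:lem:ie} is the standard argument and matches what the paper's remark about dependence on the reference cell $\hat{\tau}=[0,1]^2$ presupposes; the only (immaterial) discrepancy is that your bookkeeping yields $\sqrt{2\ctrace}\max\{1,\cinv\}$ rather than the stated $\ctrace\max\{1,\cinv\}$, a distinction the paper itself waives by consolidating all such constants into $c_\ast$.
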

\begin{remark}
The constants $\cinv,\ \ctrace,\ \cdtrace$ all depend on the polynomial degree and the reference cell or edge; $\hat{\tau}=[0,1]^2$ or $\hat{\sigma}=[0,1]$.
From now, for a simpler presentation of the analysis, we combined all these constants, and their powers into a unifying constant $c_\ast$
\end{remark}
%

\subsection{Approximation in $\Xp$}
Given a hierarchical B-spline basis $\{\basis_\lambda\}_{\lambda\in\Lambda}$, we obtain an $L^2$-dual basis $\{\psi_\lambda\}_{\lambda\in\Lambda}\Xp$ via the linear system
\begin{equation}
(\psi_\lambda,\basis_\mu)_{L^2(\Omega)}=\delta_{\lambda\mu}\quad\mu\in\Lambda.
\end{equation}
Then $\psi_\lambda(\basis_\mu)=\delta_{\lambda\mu}$ for every multi-index $\lambda$ and $\mu$.
The splines $\psi_\lambda$ will be viewed as linear functionals on $L^2(\Omega)$ via the identification $\psi_\lambda(f)=(\psi_\lambda,f)_{L^2(\Omega)}$, $f\in L^2(\Omega)$.
Let
\begin{equation}
J_\lambda=\{\mu\in\Lambda:\supp\basis_\mu\cap\supp\basis_\lambda\neq\emptyset\}.
\end{equation}
We estimate $\psi_\lambda$.
Let $T_\lambda$ be the affine transformation for which $\diam(T_\lambda(\supp \basis_\lambda)=1$.
Let $f\in L^2(\Omega)$ and define its pull-back $\hat{f}=f\circ T_\lambda$.
Also, express $\psi_\lambda$ as the linear combination of $\cal{B}_\mesh$ for some real coefficients $\{c_\mu\}_{\mu\in\Lambda}$.
\begin{equation}
|\hat{\psi}_\lambda(\hat{f})|
=\bigg|\sum_{\mu\in\Lambda}c_\mu\int_{\supp\hat{\basis}_\lambda}\hat{\basis}_\mu\hat{f}\bigg|
\leq\max_{\mu\in J_\lambda}|c_\mu|\bigg\|\sum_{\mu\in J_\lambda}\hat{\basis}_\mu\bigg\|_{L^2(T(\supp\basis_\lambda))}\norm{\hat{f}}{L^2(T_\lambda(\supp\basis_\lambda))}
\end{equation}
In view of \eqref{eq:sr} we have
\begin{equation}
\left\norm{\sum_{\mu\in J_\lambda}\hat{\basis}_\mu\right}{L^2(T(\supp\basis_\lambda))}
\lapprox \cshape|T(\supp\basis_\lambda)|^{1/2}\approx1.
\end{equation}
Moreover, the moduli of coefficients $\max_{\lambda\in\Lambda}|c_\lambda|$ are universally bounded for every $\psi_\lambda$.
Scaling back to $\supp\basis_\lambda$ yields
\begin{equation}
|\psi_\lambda(f)|
\lapprox|\supp\basis_\lambda|^{-1/2}\norm{f}{L^2(\Omega)}
\end{equation}
Define quasi-interpolation operator $I_\mesh:L^2(\Omega)\to\Xp(\Omega)$,
\begin{equation}
I_\mesh f
:=\sum_{\lambda\in\Lambda(\bs{\Omega})}\psi_\lambda(f)\phi_\lambda,
\quad f\in L^2(\Omega).
\end{equation}
The defined operator is a projection. Indeed, if $s=\sum_{\mu\in\Lambda}c_\mu\basis_\mu$ then
\begin{equation}
I_\mesh(s)
=\sum_{\lambda,\mu\in\Lambda}c_{\mu}\psi_\lambda(\basis_\mu)\basis_\lambda
=\sum_{\mu\in\Lambda}c_\mu\basis_\mu=s.
\end{equation}
\begin{lemma}
Let $\mesh$ be an admissible partition of $\Omega$. Then for every $\face\in\mesh$,
\begin{equation}
\norm{I_\mesh\v}{L^2(\cell)}\lapprox\cshape\norm{\v}{L^2(\omega_\cell)}\quad\forall\v\in L^2(\Omega).
\end{equation}
Moreover, for $0\leq t\leq s\leq r+1$.
\begin{equation}
\forall\cell\in\mesh,\quad\semi{\v-I_\mesh\v}{H^t(\cell)}\lapprox\cshape h_\cell^{s-t}\semi{\v}{H^s(\omega_\cell)}\quad\forall\v\in H^s(\Omega).
\end{equation}
\end{lemma}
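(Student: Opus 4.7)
The plan is to prove the local $L^2$-stability estimate first, and then bootstrap it into the approximation estimate via a standard Bramble–Hilbert/Deny–Lions argument that exploits the fact that $I_\mesh$ reproduces polynomials of degree up to $r$.

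For the stability bound, I would restrict $I_\mesh\v$ to a cell $\cell$ and observe that only finitely many $\basis_\lambda$ with $\supp\basis_\lambda\cap\cell\neq\emptyset$ contribute. Each index lies in the set whose cardinality is controlled by the finite-intersection property in \eqref{eq:sr} by a multiple of $\cshape$. For each such $\lambda$, I would localize the functional estimate $|\psi_\lambda(f)|\lapprox|\supp\basis_\lambda|^{-1/2}\norm{f}{L^2(\Omega)}$ derived just above the lemma to $\norm{\v}{L^2(\supp\basis_\lambda)}$, which is legitimate because $\psi_\lambda$ is a finite linear combination of $\basis_\mu$'s with $\mu\in J_\lambda$, and each such $\basis_\mu$ is supported in $\omega_\cell$ whenever $\supp\basis_\lambda\cap\cell\neq\emptyset$. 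Combined with $\norm{\basis_\lambda}{L^2(\cell)}\lapprox|\cell|^{1/2}\lapprox|\supp\basis_\lambda|^{1/2}$ (using quasi-uniformity in \eqref{eq:sr}), summation over the $O(\cshape)$ relevant indices gives the first claim.

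For the approximation estimate, I would invoke polynomial reproduction: since $\Xp$ contains $\bb{P}_r$ locally on $\omega_\cell$ and $I_\mesh$ is a projection onto $\Xp$, one has $I_\mesh p=p$ for every $p\in\bb{P}_{\lfloor s\rfloor}(\omega_\cell)\subseteq\bb{P}_r(\omega_\cell)$. Choosing $p$ to realize the Deny–Lions estimate $\semi{\v-p}{H^k(\omega_\cell)}\lapprox h_\cell^{s-k}\semi{\v}{H^s(\omega_\cell)}$ for $0\leq k\leq s$, I would write $\v-I_\mesh\v=(\v-p)-I_\mesh(\v-p)$. The first term is handled directly by the Deny–Lions bound at level $k=t$. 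For the second term, the piecewise polynomial inverse estimate \eqref{eq:inve:lem:ie} applied cell-by-cell on $\cell$ pulls the $H^t$-seminorm down to an $L^2$-norm with a factor $h_\cell^{-t}$; then the stability bound from the first part, applied to $\v-p$, reduces everything to $h_\cell^{-t}\norm{\v-p}{L^2(\omega_\cell)}\lapprox h_\cell^{s-t}\semi{\v}{H^s(\omega_\cell)}$, which is the desired bound.

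The main obstacle, and the only non-routine step, is the geometric bookkeeping in the stability argument: one must verify that the support extension appearing through the chain $\cell \rightsquigarrow \basis_\lambda \rightsquigarrow \psi_\lambda \rightsquigarrow J_\lambda$ remains contained in a fixed dilation of $\omega_\cell$ whose depth depends only on $\cshape$, and that the rescaling used earlier to bound the dual coefficients transfers into the $L^2$-inner-product bound with the correct power of $|\supp\basis_\lambda|$. A secondary, but minor, issue is that the Deny–Lions constant implicitly depends on the shape of $\omega_\cell$, which is controlled uniformly by the quasi-uniformity bound on $\diam(\omega_\cell)/h_\cell$ in \eqref{eq:sr}; this is what forces the constant $\cshape$ to appear in the final estimate.
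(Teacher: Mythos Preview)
Your proposal is correct and follows essentially the same route as the paper: local $L^2$-stability via the dual-functional bound $|\psi_\lambda(\v)|\lapprox|\supp\basis_\lambda|^{-1/2}\norm{\v}{L^2}$ together with the finite-intersection/quasi-uniformity constraints, and then the approximation estimate by writing $\v-I_\mesh\v=(\v-p)-I_\mesh(\v-p)$, applying the inverse estimate \eqref{eq:inve:lem:ie} to the second piece, invoking the stability just proved, and closing with Bramble--Hilbert on $\omega_\cell$. The only cosmetic difference is that the paper compresses your summation step by appealing to the partition of unity $\sum_\lambda\basis_\lambda=1$ to obtain $\norm{I_\mesh\v}{L^2(\cell)}\le|\cell|^{1/2}\max_\lambda|\psi_\lambda(\v)|$ directly; the geometric bookkeeping you flag is handled (somewhat implicitly) by the shape-regularity assumptions \eqref{eq:sr} in exactly the way you describe.
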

\begin{proof}
By partition of unity we have
\begin{equation}
\begin{split}
\norm{I_\mesh\v}{L^2(\cell)}
&\leq|\cell|^{1/2}\max_{\lambda:\supp\basis_\lambda\cap\cell}|\psi_\lambda(\v)|\\
&\lapprox|\cell|^{1/2}\max_{\lambda:\supp\basis_\lambda\cap\cell}|\supp\basis_\lambda|^{-1/2}\norm{f}{L^2(\supp\basis_\lambda\cap\cell)}\\
&=|\cell|^{1/2}|\omega_\cell|^{-1/2}\norm{\v}{L^2(\omega_\cell)}
\end{split}
\end{equation}
however with shape-regularity we have $|\omega_\cell|^{-1/2}|\cell|^{1/2}\lapprox \cshape$.
In other words the operator $I_\mesh:L^2(\Omega)\to \Xp(\Omega)$ is a stable projection:
\begin{equation}
\norm{I_\mesh\v}{L^2(\cell)}\lapprox C_\rm{shape}\norm{\v}{L^2(\omega_\cell)}\quad\forall \v\in L^2(\Omega).
\end{equation}
Let $\v\in H^s(\Omega)$. Bramble-Hilbert lemma asserts that for $0\leq t\leq s\leq r+1$ we have
\begin{equation}
\inf_{\pi\in\bb{P}_{r}(G)}\semi{\v-\pi}{H^t(\cell)}\lapprox h_\cell^{s-t}\semi{\v}{H^s(G)},
\end{equation}
where the implicit constant depends on polynomial degree $r$ and $G\subseteq\Omega$.
Let $Q\in\bb{P}_r(\omega_\cell)$. We have
\begin{equation}
\semi{I_\mesh(\v-\pi)}{H^t(\cell)}\leq h_\cell^{-t}\norm{I_\mesh(\v-\pi)}{L^2(\cell)},
\end{equation}
by equivalence of norms in finite dimensions.
\begin{equation}
\begin{split}
\semi{\v-I_\mesh\v}{H^t(\cell)}
&\leq\inf_{\pi\in\bb{P}_r(\omega_\cell)}\left(\semi{\v-\pi}{H^t(\cell)}
+h_\cell^{-t}\norm{\pi-\v}{L^2(\omega_\cell)}\right),\\
&\lapprox\inf_{\pi\in\bb{P}_r(\omega_\cell)}\left(\semi{\v-\pi}{H^t(\cell)}
+h_\cell^{-t}\norm{\v-\pi}{L^2(\omega_\cell)}\right),\\
&\lapprox h_\cell^{s-t}\semi{\v}{H^s(\cell)}
+h_\cell^{-t}\rm{diam}(\omega_\cell)^s\semi{\v}{H^s(\omega_\cell)}.
\end{split}
\end{equation}
With shape-regularity we conclude
\begin{equation}
\forall\cell\in\cal{Q},\quad
\semi{\v-I_\mesh\v}{H^t(\cell)}
\lapprox C_\rm{shape}h_\cell^{s-t}\semi{\v}{H^s(\omega_\cell)}
\quad\forall\v\in L^2(\Omega)\cap H^s(\omega_\cell).
\end{equation}
\end{proof}
\section{A priori analysis for Nitsche's formulation}
In the coercivity analysis we follow the ideas presented in \cite{kim2015b},\cite{embar2010imposing} which are similar to the spirit of dG methods \cite{bonito2010quasi}.
\begin{lemma}[Continuity of $\ap$]\label{lem:ctynbf}
Let $\gamma_1,\gamma_2>0$ be given.
We have
\begin{equation}\label{eq:res:lem:ctynbf}
|\ap(\u,\v)|\leq\Ccont\enorm{\u}{\mesh}\enorm{\v}{\mesh}\quad\u,\v\in H^2(\Omega),
\end{equation}
with a constant $\Ccont>0$ independent of $\mesh$.
\end{lemma}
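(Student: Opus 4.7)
The plan is to bound $|\ap(u,v)|$ term by term using Cauchy--Schwarz in $L^2(\Omega)$ and $L^2(\Gamma)$, combined with the discrete trace and inverse inequalities of Lemma~\ref{lem:ie} applied to the piecewise polynomial $\Pip(\Lap u)$ and $\Pip(\Lap v)$.

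First, the volume contribution $|\a(u,v)|=|(\Lap u,\Lap v)_{L^2(\Omega)}|\le\norm{\Lap u}{L^2(\Omega)}\norm{\Lap v}{L^2(\Omega)}$ is directly controlled by $\enorm{u}{\mesh}\enorm{v}{\mesh}$ from the definition of the mesh-dependent norm. The two stabilization integrals factor cleanly by Cauchy--Schwarz with the $h_\Gamma$ weights matched to $\norm{\cdot}{3/2,\mesh}$ and $\norm{\dfrac{\cdot}{\nu}}{1/2,\mesh}$, each of which is subsumed by $\enorm{\cdot}{\mesh}$ up to the factors of $\gamma_1^{1/2}$ and $\gamma_2^{1/2}$.

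The heart of the argument is the four mixed boundary integrals. For terms of the form $\int_\Gamma \Pip(\Lap u)\,\dfrac{v}{\nu}$, I split the integrand as $\bigl(h_\Gamma^{1/2}\Pip(\Lap u)\bigr)\cdot\bigl(h_\Gamma^{-1/2}\dfrac{v}{\nu}\bigr)$, so that edgewise Cauchy--Schwarz yields
\begin{equation*}
\Bigl|\int_\Gamma \Pip(\Lap u)\,\dfrac{v}{\nu}\Bigr|
\le\Bigl(\sum_{\sigma\in\bedges_\mesh}h_\sigma\norm{\Pip(\Lap u)}{L^2(\sigma)}^2\Bigr)^{1/2}\gamma_2^{-1/2}\enorm{v}{\mesh}.
\end{equation*}
Because $\Pip(\Lap u)|_\tau\in\bb{P}_{r-2}(\tau)$, the discrete trace inequality \eqref{eq:dtrace:lem:ie} gives $h_\sigma\norm{\Pip(\Lap u)}{L^2(\sigma)}^2\lapprox\norm{\Pip(\Lap u)}{L^2(\tau)}^2$ on each boundary cell $\tau$, and $L^2$-orthogonality of $\Pip$ yields $\norm{\Pip(\Lap u)}{L^2(\tau)}\le\norm{\Lap u}{L^2(\tau)}$. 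Summing over $\sigma\in\bedges_\mesh$ therefore bounds the first factor by $\norm{\Lap u}{L^2(\Omega)}\le\enorm{u}{\mesh}$ up to $c_\ast$. For the more delicate normal-derivative terms $\int_\Gamma\dfrac{\Pip(\Lap u)}{\nu}v$, I use the $h_\Gamma^{3/2}/h_\Gamma^{-3/2}$ split so that the $v$-factor is controlled by $\gamma_1^{-1/2}\enorm{v}{\mesh}$, and then chain the discrete trace inequality \eqref{eq:dtrace:lem:ie} with the inverse estimate \eqref{eq:inve:lem:ie} on $\grad\Pip(\Lap u)$:
\begin{equation*}
h_\sigma^{3}\Bigl\|\dfrac{\Pip(\Lap u)}{\nu}\Bigr\|_{L^2(\sigma)}^{2}
\lapprox h_\sigma^{2}\,\norm{\grad\Pip(\Lap u)}{L^2(\tau)}^{2}
\lapprox \norm{\Pip(\Lap u)}{L^2(\tau)}^{2},
\end{equation*}
where the last step uses the shape-regularity relation $h_\sigma\sim h_\tau$ from \eqref{eq:sr}. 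Applying $L^2$-stability of $\Pip$ and summing collapses this factor into $\norm{\Lap u}{L^2(\Omega)}\le\enorm{u}{\mesh}$. Swapping the roles of $u$ and $v$ disposes of the two remaining mixed terms, and collecting all seven contributions produces \eqref{eq:res:lem:ctynbf} with $\Ccont$ depending only on $\gamma_1,\gamma_2,\cshape$, and $c_\ast$.

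The principal obstacle is the asymmetric nature of these mixed integrals: since $u,v\in H^2(\Omega)$ the unprojected quantity $\Lap u$ lives only in $L^2(\Omega)$ and has no intrinsic boundary trace, so the boundary terms are well-defined precisely because $\Pip$ produces a piecewise polynomial on which the discrete trace/inverse inequalities are legitimate. The careful bookkeeping of $h_\sigma$-powers against the mesh-dependent norm weights, together with $L^2$-stability of $\Pip$, is what converts the potentially singular boundary integrals into quantities controlled by $\norm{\Lap u}{L^2(\Omega)}$.
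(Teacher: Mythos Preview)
Your proof is correct and follows essentially the same route as the paper: Cauchy--Schwarz on each piece, with the mixed boundary integrals handled by inserting the appropriate $h_\Gamma^{\pm1/2}$ or $h_\Gamma^{\pm3/2}$ weights and then invoking the discrete trace/inverse estimates on the piecewise polynomial $\Pip(\Lap u)$ together with the $L^2$-stability of $\Pip$. The paper's own argument is terser---it collapses your two-step chain (discrete trace plus inverse estimate plus projection stability) into a single $\lapprox$ and packages the result using the shorthand $\lambda$, $\lambda^\ast$---but the underlying estimates are identical, and your observation that the projection is precisely what makes the boundary terms well-defined on generic $H^2$ arguments is exactly the point.
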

\begin{proof}
We begin with the interior integrals;
\begin{equation}
\a(\u,\v)\leq \norm{\Lap\u}{L^2(\Omega)}\norm{\Lap\v}{L^2(\Omega)}.
\end{equation}
As for the boundary terms,
\begin{equation}
\lambda^\ast(\u,\v)\leq\norm{\u}{L^2(\Gamma)}\left\norm{\textstyle\dfrac{\Pi(\Lap\v)}{\nu}\right}{L^2(\Gamma)}
+\left\norm{\textstyle\dfrac{\u}{\nu}\right}{L^2(\Gamma)}\left\norm{\Pi(\Lap\v)\right}{L^2(\Gamma)}
\end{equation}
\begin{equation}
\begin{split}
\lambda^\ast(\u,\v)&\lapprox\left\norm{h_\Gamma^{-3/2}\u\right}{L^2(\Gamma)}\left\norm{\Lap\u\right}{L^2(\Omega)}
+\left\norm{\textstyle h_\Gamma^{-1/2}\dfrac{\u}{\nu}\right}{L^2(\Gamma)}\left\norm{\Lap\v\right}{L^2(\Omega)}\\
&\lapprox \left(\norm{\u}{3/2,\mesh}+\left\norm{\textstyle\dfrac{\u}{\nu}\right}{1/2,h}\right)\left\norm{\Lap\v\right}{L^2(\Omega)}
\end{split}
\end{equation}
Similarily,
\begin{equation}
\lambda(\u,\v)\leq \left\norm{\Lap\u\right}{L^2(\Omega)}\left(\norm{\v}{3/2,\mesh}+\left\norm{\textstyle\dfrac{\v}{\nu}\right}{1/2,\mesh}\right).
\end{equation}
The stabilization terms are similarly controlled
\begin{equation}
\Sigma_\mesh(\u,\v)\leq\gamma_1\left\norm{\u\right}{3/2,\mesh}\left\norm{\v\right}{3/2,\mesh}+\gamma_2\left\norm{\textstyle\dfrac{\u}{\nu}\right}{1/2,\mesh}\left\norm{\textstyle\dfrac{\v}{\nu}\right}{1/2,\mesh}
\end{equation}
\end{proof}
%
\begin{lemma}\label{lem:L2estimates}
Let $\mesh$ be an admissible partition, let $\tau\in\mesh$ and let $\sigma\in\bedges_\mesh$ with $\sigma\subset\diff\tau$.
The projection operator $\Pi$ satisfies the following stability estimates:
\begin{equation}\label{eq:stability:lem:L2estimates}
\norm{\Pi\v}{L^2(\Omega)}\leq\norm{\v}{L^2(\Omega)},
\end{equation}
and
\begin{equation}\label{eq:inverse:lem:L2estimates}
\left\norm{\Pi\v\right}{L^2(\sigma)}\leq\c h_\sigma^{-1/2}\norm{\v}{L^2(\tau)}
\quad\text{and}\quad
\left\norm{\textstyle\dfrac{(\Pi\v)}{\n_\sigma}\right}{L^2(\sigma)}\leq\c h_\sigma^{-3/2}\norm{\v}{L^2(\tau)},
\end{equation}
holding for every $\v\in L^2(\Omega)$.
\end{lemma}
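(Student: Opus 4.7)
The plan is to reduce each of the three estimates to the cell-local contraction of $\Pi$ combined with the discrete trace and inverse inequalities already collected in Lemma~\ref{lem:ie}. The key structural observation is that the target space $\cal{P}^{r-2}_\mesh(\Omega)=\prod_{\tau\in\mesh}\bb{P}_{r-2}(\tau)$ carries no inter-cell continuity requirement, so the global $L^2$-orthogonal projection decouples into cell-wise $L^2$-projections $\Pi|_\tau$ onto $\bb{P}_{r-2}(\tau)$. In particular, $\|\Pi\v\|_{L^2(\tau)}\leq\|\v\|_{L^2(\tau)}$ for every $\tau\in\mesh$, and summing in $\tau$ immediately yields \eqref{eq:stability:lem:L2estimates}.

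For the first estimate in \eqref{eq:inverse:lem:L2estimates}, I would fix $\tau\in\mesh$ and $\sigma\subset\diff\tau$, and note that $\Pi\v|_\tau\in\bb{P}_{r-2}(\tau)\subset\bb{P}_r(\tau)$. Applying the discrete trace inequality \eqref{eq:dtrace:lem:ie} to $\Pi\v|_\tau$ gives
\begin{equation*}
\norm{\Pi\v}{L^2(\sigma)}\leq\cdtrace h_\sigma^{-1/2}\norm{\Pi\v}{L^2(\tau)}\leq\cdtrace h_\sigma^{-1/2}\norm{\v}{L^2(\tau)},
\end{equation*}
where the last step uses the cell-wise contraction from the previous paragraph. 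The constant $\c$ is then taken to absorb $\cdtrace$.

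For the normal-derivative estimate, I would again start with the discrete trace inequality \eqref{eq:dtrace:lem:ie}, this time applied to $\diffe(\Pi\v)\in\bb{P}_{r-3}(\tau)\subset\bb{P}_r(\tau)$, and then invoke the inverse estimate \eqref{eq:inve:lem:ie} with $s=0$, $t=1$ to pay one power of $h_\tau$ in exchange for dropping the gradient:
\begin{equation*}
\left\norm{\textstyle\dfrac{(\Pi\v)}{\n_\sigma}\right}{L^2(\sigma)}\leq\cdtrace h_\sigma^{-1/2}\semi{\Pi\v}{H^1(\tau)}\leq\cdtrace\cinv h_\sigma^{-1/2}h_\tau^{-1}\norm{\Pi\v}{L^2(\tau)}.
\end{equation*}
Cell-wise contraction and the mesh-grading part of \eqref{eq:sr}, which gives $h_\tau\lapprox h_\sigma$ uniformly, then yield the claimed $h_\sigma^{-3/2}$ scaling upon absorbing all shape-regularity and trace/inverse constants into a single $\c$.

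I do not expect a real obstacle here. The only point that deserves attention is the cell-wise decoupling of the global $\Pi$, which is what makes the $L^2$-contraction hold locally on each $\tau$ (and hence on the boundary cell attached to $\sigma$); without this the right-hand sides would involve $\omega_\tau$-type neighbourhoods and the estimates would lose their locality. Everything else is a direct application of Lemma~\ref{lem:ie} together with the shape-regularity bounds \eqref{eq:sr}.
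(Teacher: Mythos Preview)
Your proposal is correct and follows essentially the same route as the paper: local $L^2$-contraction of $\Pi$ combined with the discrete trace and inverse inequalities of Lemma~\ref{lem:ie}. The only cosmetic difference is the order of operations in the normal-derivative bound---the paper first passes from $\partial_{\n_\sigma}(\Pi\v)$ to $\Pi\v$ on $\sigma$ via an inverse step and then traces to $\tau$, whereas you trace $\partial_{\n_\sigma}(\Pi\v)$ to $\tau$ first and then apply the cell inverse estimate; both yield the same $h_\sigma^{-3/2}$ scaling. Your explicit remark that $\Pi$ decouples cell-wise (because $\cal{P}^{r-2}_\mesh(\Omega)$ carries no inter-element continuity) is a useful clarification, since it is precisely what justifies the local contraction $\norm{\Pi\v}{L^2(\tau)}\leq\norm{\v}{L^2(\tau)}$ that both arguments rely on.
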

\begin{proof}
The stability estimate \eqref{eq:stability:lem:L2estimates} follows from orthogonality of the residual $\v-\Pi^r\v$ to $\Pi^r\v$.
To establish \eqref{eq:inverse:lem:L2estimates}, we will only prove the second one, as the first estimate follows similarly.
Let $\v\in L^2(\tau)$.
In view of Lemma \ref{lem:ie} and stability \eqref{eq:stability:lem:L2estimates}
\begin{equation}
\begin{split}
\left\norm{\textstyle\dfrac{(\Pi\v)}{\n_\sigma}\right}{L^2(\sigma)}^2&
\leq\cinv h_\sigma^{-2}\norm{\Pi\v}{L^2(\sigma)}^2\\
&\leq\cdtrace\cinv h_\sigma^{-3}\norm{\Pi\v}{L^2(\tau)}^2
\leq\cdtrace\cinv h_\sigma^{-3}\norm{\v}{L^2(\tau)}^2.
\end{split}
\end{equation}
\end{proof}
\begin{lemma}[Coercivity of $\ap$]\label{lem:csvnbf}
For suitably large stabilization parameters $\gamma_1$ and $\gamma_2$, there exists a constant $\Ccoer>0$ such that
\begin{equation}\label{eq:res:lem:csvnbf}
\Ccoer\enorm{\v}{\mesh}^2\leq\ap(\v,\v)\quad\forall\v\in H^2(\Omega).
\end{equation}
\end{lemma}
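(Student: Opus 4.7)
The plan is to expand $\ap(v,v)$ using the definition in \eqref{eq:nbf}, identify which pieces already match the three squared quantities making up $\enorm{v}{\mesh}^2$, and show that the remaining cross terms can be absorbed by choosing the stabilization parameters sufficiently large. After substituting $u=v$, the bilinear form reads
\begin{equation*}
\ap(v,v) = \|\Lap v\|_{L^2(\Omega)}^2 \;-\; 2\int_\Gamma \Pi(\Lap v)\,{\textstyle\dfrac{v}{\nu}} \;+\; 2\int_\Gamma {\textstyle\dfrac{\Pi(\Lap v)}{\nu}}\,v \;+\; \gamma_1\|v\|_{3/2,\mesh}^2 \;+\; \gamma_2\left\|{\textstyle\dfrac{v}{\nu}}\right\|_{1/2,\mesh}^2.
\end{equation*}
Three of these five terms are exactly the pieces of $\enorm{v}{\mesh}^2$, so the whole task is to control the two cross-boundary integrals.

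For each boundary edge $\sigma$ with $\sigma\subset\diff\cell$, I would apply Cauchy--Schwarz in $L^2(\sigma)$ with the weights $h_\sigma^{1/2}$ and $h_\sigma^{-1/2}$ on the first cross term, and $h_\sigma^{3/2}$ and $h_\sigma^{-3/2}$ on the second, so that the unweighted factors on the test-function side are precisely what sum to $\|\partial_\nu v\|_{1/2,\mesh}$ and $\|v\|_{3/2,\mesh}$ respectively. The weighted projection factors are then handled by Lemma~\ref{lem:L2estimates}: the first estimate of \eqref{eq:inverse:lem:L2estimates} gives $h_\sigma\|\Pi(\Lap v)\|_{L^2(\sigma)}^2\lapprox\|\Lap v\|_{L^2(\cell)}^2$, and the second gives $h_\sigma^3\|\partial_\nu\Pi(\Lap v)\|_{L^2(\sigma)}^2\lapprox\|\Lap v\|_{L^2(\cell)}^2$. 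Summing over boundary edges and using finite intersection from \eqref{eq:sr}, both projection-side weighted norms are bounded by $\|\Lap v\|_{L^2(\Omega)}^2$ up to a constant depending only on $c_\ast$ and $\cshape$.

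Combining these bounds, each cross term is of the form $C\|\Lap v\|_{L^2(\Omega)}\cdot\|v\|_{3/2,\mesh}$ or $C\|\Lap v\|_{L^2(\Omega)}\cdot\|\partial_\nu v\|_{1/2,\mesh}$, which I split via Young's inequality $2ab\le \eps a^2+\eps^{-1}b^2$. Picking $\eps$ small enough that $1-2\eps>0$ preserves a positive fraction of $\|\Lap v\|_{L^2(\Omega)}^2$, and then choosing
\begin{equation*}
\gamma_1 > C^2/\eps,\qquad \gamma_2 > C^2/\eps,
\end{equation*}
leaves strictly positive coefficients in front of $\|v\|_{3/2,\mesh}^2$ and $\|\partial_\nu v\|_{1/2,\mesh}^2$. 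The smallest of the three resulting coefficients (appropriately rescaled against the $\gamma_i$ in $\enorm{\cdot}{\mesh}^2$) is the desired $\Ccoer$.

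The only delicate point is purely bookkeeping: the stabilization parameters $\gamma_1,\gamma_2$ appear both as the weights in $\enorm{v}{\mesh}^2$ and as the coefficients one needs to make large to dominate the cross-term constants from Lemma~\ref{lem:L2estimates}. One must therefore first fix $\eps$, then fix $\gamma_1,\gamma_2$ large enough in terms of $\eps$ and the constants from \eqref{eq:inverse:lem:L2estimates} and \eqref{eq:sr}, and finally read off $\Ccoer$ independently of $\mesh$; mesh independence is automatic because every estimate used is local and scale-correct. Beyond this, the argument is routine, and no saturation assumption or regularity beyond $H^2(\Omega)$ is invoked.
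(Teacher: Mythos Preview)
Your proof is correct and follows essentially the same route as the paper: split the two cross-boundary integrals via Young's inequality, bound the projection factors on each boundary edge by $\|\Lap v\|_{L^2(\tau)}$ using Lemma~\ref{lem:L2estimates}, and then absorb the resulting constants by taking $\gamma_1,\gamma_2$ large. Your explicit $h_\sigma$-weighting in the Cauchy--Schwarz step is in fact cleaner than the paper's presentation, which applies Young directly on $L^2(\Gamma)$ and only afterwards inserts the scale factors.
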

\begin{proof}
For $\delta_1,\,\delta_2>0$ we use Young's inequality to write
\begin{equation}
\begin{split}
\lambda(\v,\v)+\lambda^\ast(\v,\v)&\ge-\frac{1}{\delta_1}\norm{\v}{L^2(\Gamma)}^2
-\delta_1\left\norm{\textstyle\dfrac{\Pi(\Lap\v)}{\nu}\right}{L^2(\Gamma)}^2
-\frac{1}{\delta_2}\left\norm{\textstyle\dfrac{\v}{\nu}\right}{L^2(\Gamma)}^2
-\delta_2\left\norm{\Pi(\Lap\v)\right}{L^2(\Gamma)}^2.
\end{split}
\end{equation}
Together with the interior terms we have
\begin{equation}
\begin{split}
\ap(\v,\v)&\ge\norm{\Lap\v}{L^2(\Omega)}^2-\delta_1\left\norm{\textstyle\dfrac{\Pi(\Lap\v)}{\nu}\right}{L^2(\Gamma)}^2-\delta_2\left\norm{\Pi(\Lap\v)\right}{L^2(\Gamma)}^2\\
&+\left(1-\frac{1}{\gamma_1}-\frac{1}{\delta_1\gamma_1}\right)\gamma_1\left\norm{\psi\right}{3/2,\mesh}^2
+\left(1-\frac{1}{\delta_2\gamma_2}\right)\gamma_2\left\norm{\textstyle\dfrac{\v}{\nu}\right}{1/2,\mesh}^2
\end{split}
\end{equation}
\begin{equation}
\begin{split}
\ap(\v,\v)&\ge\left(1-\delta_1C\max_{\sigma\in\bedges}h_\sigma^{-3/2}-\delta_2C\max_{\sigma\in\bedges}h_\sigma^{-1/2}\right)\norm{\Lap\v}{L^2(\Omega)}^2\\
&+\left(1-\frac{1}{\gamma_1}-\frac{1}{\delta_1\gamma_1}\right)\gamma_1\left\norm{\v\right}{3/2,\mesh}^2
+\left(1-\frac{1}{\delta_2\gamma_2}\right)\gamma_2\left\norm{\textstyle\dfrac{\v}{\nu}\right}{1/2,\mesh}^2,
\end{split}
\end{equation}
\end{proof}
\begin{remark}
\end{remark}
\begin{lemma}[A priori error estimate for Nitsche's forumation]
\begin{equation}
\enorm{\u-\U}{\mesh}\leq\left(1+\frac{\Ccont}{\Ccoer}\right)\inf_{\V\in\Xp}\enorm{\u-\V}{\mesh}+\frac{1}{\Ccoer}\norm{\Ep}{H^{-2}(\Omega)}.
\end{equation}
\end{lemma}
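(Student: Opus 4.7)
The plan is to run a Cea-type argument, adapted to absorb the inconsistency term $\inner{\Ep,\cdot}$ supplied by Lemma~\ref{lem:ac}. Fix an arbitrary $\V \in \Xp$. Apply coercivity (Lemma~\ref{lem:csvnbf}) to $\U - \V \in \Xp$ and split the bilinear form by inserting $\pm\u$:
$$\Ccoer\,\enorm{\U-\V}{\mesh}^2 \leq \ap(\U-\V,\U-\V) = \ap(\U-\u,\U-\V) + \ap(\u-\V,\U-\V).$$

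The second term is handled directly by continuity (Lemma~\ref{lem:ctynbf}), giving the bound $\Ccont\,\enorm{\u-\V}{\mesh}\,\enorm{\U-\V}{\mesh}$. For the first term, I would subtract the discrete equation $\ap(\U,\W)=\ellf(\W)$ (valid for every $\W\in\Xp$ by the definition of $\U$ in \eqref{eq:dnp}) from the inconsistency identity $\ap(\u,\W)=\ellf(\W)+\inner{\Ep,\W}$ to obtain the perturbed Galerkin relation $\ap(\U-\u,\W)=-\inner{\Ep,\W}$ for all $\W\in\Xp$. Specializing to $\W=\U-\V$ and using the duality pairing, this contribution is controlled by $\norm{\Ep}{H^{-2}(\Omega)}\,\norm{\U-\V}{H^2(\Omega)}$, which in turn is bounded by $\norm{\Ep}{H^{-2}(\Omega)}\,\enorm{\U-\V}{\mesh}$ after invoking the norm dominance $\norm{\cdot}{H^2(\Omega)}\leq\enorm{\cdot}{\mesh}$ announced in the remark following \eqref{eq:meshnorm}.

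Combining these two estimates and dividing through by $\enorm{\U-\V}{\mesh}$ gives $\Ccoer\,\enorm{\U-\V}{\mesh} \leq \Ccont\,\enorm{\u-\V}{\mesh} + \norm{\Ep}{H^{-2}(\Omega)}$. Applying the triangle inequality $\enorm{\u-\U}{\mesh}\leq\enorm{\u-\V}{\mesh}+\enorm{\U-\V}{\mesh}$ and finally taking the infimum over $\V\in\Xp$ delivers the stated bound.

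The main obstacle is the duality step, which forces one to control the $H^2(\Omega)$-norm of an arbitrary element of $\Xp$ by its mesh-dependent energy norm. This hinges on the norm equivalence promised in the remark after \eqref{eq:meshnorm} (namely, the forthcoming Lemma~\ref{lem:normequiv}); establishing that the constant in $\norm{\cdot}{H^2(\Omega)}\leq\enorm{\cdot}{\mesh}$ is exactly $1$ is what is required to recover the clean coefficient $1/\Ccoer$ in front of $\norm{\Ep}{H^{-2}(\Omega)}$. Everything else in the argument is algebraic manipulation plus the two structural lemmas (continuity and coercivity) already in place.
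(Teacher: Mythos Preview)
Your argument is essentially identical to the paper's: apply coercivity to $\V-\U$, split $\ap(\V-\U,\V-\U)=\ap(\V-\u,\V-\U)+\ap(\u-\U,\V-\U)$, bound the first piece by continuity and the second by the inconsistency identity $\ap(\u-\U,\W)=\inner{\Ep,\W}$, then divide and apply the triangle inequality. The paper's proof simply writes $|\inner{\Ep,\W}|\leq\norm{\Ep}{H^{-2}(\Omega)}\enorm{\W}{\mesh}$ without further comment, so your explicit flagging of the $\norm{\cdot}{H^2(\Omega)}\leq\enorm{\cdot}{\mesh}$ comparison is in fact more careful than the original.
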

\begin{proof}
From
\begin{equation}
\enorm{\u-\U}{\mesh}\leq\enorm{\u-\V}{\mesh}+\enorm{\V-\U}{\mesh}
\end{equation}
we will estimate $\enorm{\V-\U}{\mesh}$. Let $\W=\V-\U$,
\begin{equation}
\begin{split}
\Ccoer\enorm{\V-\U}{\mesh}^2&\leq\ap(\V-\u,\W)+\ap(\u-\U,\W)\\
&\leq\Ccont\enorm{\u-\V}{\mesh}\enorm{\W}{\mesh}+|\!\inner{\Ep,\W}\!|\\
&\leq\Ccont\enorm{\u-\V}{\mesh}\enorm{\W}{\mesh}+\norm{\Ep}{H^{-2}(\Omega)}\enorm{\W}{\mesh}
\end{split}
\end{equation}
which makes
\begin{equation}
\enorm{\u-\U}{\mesh}\leq\left(1+\textstyle\frac{\Ccont}{\Ccoer}\right)\enorm{\u-\V}{\mesh}+\textstyle\frac{1}{\Ccoer}\norm{\Ep}{H^{-2}(\Omega)}
\end{equation}
\end{proof}
We will assess the inconsistency and show that the formulation \eqref{eq:ndp} is in fact consistent asymptotically.
For this we will need some approximation tools.
\begin{lemma}\label{lem:L2error}
Let $\mesh$ be an admissible partition, let $\tau\in\mesh$ and let $\sigma\in\bedges_\mesh$ with $\sigma\subset\diff\tau$.
For a constant $\cproj>0$, depending only on $\cshape$,
if $0\leq t\leq s\leq r-1$ then
\begin{equation}\label{eq:result1:lem:L2error}
\semi{\v-\Pi(\v)}{H^t(\tau)}\leq \cproj h_\tau^{s-t}\semi{\v}{H^s(\tau)},
\end{equation}
and
\begin{equation}\label{eq:result2:lem:L2error}
\norm{\v-\Pi(\v)}{L^2(\sigma)}\leq\cproj h_\sigma^{s-1/2}\semi{\v}{H^s(\tau)},
\end{equation}
holding for every $\v\in\H(\Omega)$.
\end{lemma}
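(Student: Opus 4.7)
\textbf{Proof plan for Lemma~\ref{lem:L2error}.} The strategy is the classical Bramble--Hilbert argument localized to each cell, together with the general trace inequality \eqref{eq:GeneralTrace} to lift the cell estimate onto the boundary edge $\sigma$.

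For the interior bound \eqref{eq:result1:lem:L2error}, the key observation is that the local restriction $\Pi|_\tau$ is the $L^2$-orthogonal projection onto $\bb{P}_{r-2}(\tau)$, so it reproduces every polynomial of degree at most $r-2$. For any $q\in\bb{P}_{r-2}(\tau)$, write $\v-\Pi\v=(\v-q)-\Pi(\v-q)$. Applying the triangle inequality, the inverse estimate \eqref{eq:inve:lem:ie} to the polynomial $\Pi(\v-q)\in\bb{P}_{r-2}(\tau)$, and the $L^2$-stability \eqref{eq:stability:lem:L2estimates} of $\Pi$ yields
\[
\semi{\v-\Pi\v}{H^t(\tau)}\leq\semi{\v-q}{H^t(\tau)}+\cinv h_\tau^{-t}\norm{\v-q}{L^2(\tau)}.
\]
Taking the infimum over $q\in\bb{P}_{r-2}(\tau)$ and invoking the Bramble--Hilbert lemma on each term --- valid for $0\leq t\leq s\leq r-1$ since the reproducing space has degree $r-2$ --- gives \eqref{eq:result1:lem:L2error} with a constant depending only on $\cinv$ and the reference-cell Bramble--Hilbert constant, hence only on $\cshape$.

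For the boundary bound \eqref{eq:result2:lem:L2error}, I apply the general trace inequality \eqref{eq:GeneralTrace} to $\v-\Pi\v$, which belongs to $H^1(\tau)$ since $\v\in\H(\Omega)$:
\[
\norm{\v-\Pi\v}{L^2(\sigma)}^2\leq\ctrace\left(h_\sigma^{-1}\norm{\v-\Pi\v}{L^2(\tau)}^2+h_\sigma\semi{\v-\Pi\v}{H^1(\tau)}^2\right).
\]
Substituting \eqref{eq:result1:lem:L2error} with $t=0$ and $t=1$, and using the mesh-grading bound $h_\tau\leq\cshape h_\sigma$ from \eqref{eq:sr} to equate powers of $h_\tau$ and $h_\sigma$, both contributions collapse into a single term of order $h_\sigma^{2s-1}\semi{\v}{H^s(\tau)}^2$; taking square roots delivers \eqref{eq:result2:lem:L2error}.

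No conceptual obstacle is expected: the only care needed is book-keeping the admissible range $0\leq s\leq r-1$ dictated by the degree of the target space $\cal{P}^{r-2}_\mesh$, and verifying that the cell-wise stability of $\Pi$ follows from its global $L^2$-stability because $\cal{P}^{r-2}_\mesh(\Omega)$ is a direct product over cells. Fractional values of $s$ between the integer endpoints, if needed later, can be recovered by standard Sobolev-space interpolation.
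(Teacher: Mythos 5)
Your proposal is correct and follows essentially the same route as the paper: the triangle-inequality decomposition through a polynomial, the inverse estimate \eqref{eq:inve:lem:ie} combined with the $L^2$-stability \eqref{eq:stability:lem:L2estimates} of $\Pi$, the Bramble--Hilbert lemma, and then the trace inequality \eqref{eq:GeneralTrace} with mesh grading for the edge bound. You are in fact slightly more careful than the paper, which takes the comparison polynomial in $\bb{P}_r(\tau)$ and states the range $1\leq t\leq s\leq r+1$, whereas your choice of $q\in\bb{P}_{r-2}(\tau)$ correctly matches the target space $\cal{P}^{r-2}_\mesh$ and explains the stated range $0\leq t\leq s\leq r-1$.
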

\begin{proof}
Let $1\leq t\leq s\leq r+1$ and let $\v\in\H(\Omega)$.
Let $\rho\in\bb{P}_r(\tau)$.
\begin{equation}
\begin{split}
\semi{\v-\Pi\v}{H^t(\tau)}&\leq\semi{\v-\rho}{H^t(\tau)}+\semi{\Pi(\rho-\v)}{H^t(\tau)}\\
&\leq\semi{\v-\rho}{H^t(\tau)}+\cinv h_\tau^{-t}\norm{\Pi(\rho-\v)}{L^2(\tau)}
\end{split}
\end{equation}
with the classical Bramble-Hilbert lemma we arrive at \eqref{eq:result1:lem:L2error} with $\cproj= (1+\cinv)c_\rm{HB}$ with $c_\rm{HB}>0$ is the proportionality constant of Bramble-Hilbert lemma.
Now in view of \eqref{eq:GeneralTrace}
\begin{equation}
\begin{split}
\norm{\v-\Pi\v}{L^2(\sigma)}^2\leq&\ctrace\left(h_\sigma^{-1}\norm{\v-\Pi\v}{L^2(\tau)}^2+h_\sigma\semi{\v-\Pi\v}{H^1(\tau)}^2\right)\\
&\ctrace\cproj\left(h_\sigma^{-1}h_\tau^{2s}\semi{\v}{H^s(\tau)}^2+
h_\sigma h_\tau^{2s-2}\semi{\v}{H^s(\tau)}^2\right)\\
&\leq \ctrace\cproj h_\sigma^{2s-1}\semi{\v}{H^s(\tau)}^2.
\end{split}
\end{equation}
\end{proof}
\begin{lemma}[Asymptotic consistency]\label{lem:AsymptoticConsistency}
If $\u\in\bb{E}(\Omega)$ is the solution to \eqref{eq:cwp} for which $\Lap\u\in H^s(\Omega)$, $s>0$, then for $\v\in H^2(\Omega)$,
\begin{equation}\label{eq:Resut:lem:AsymptoticConsistency}
\inner{\Ep,\v}\leq\cproj h_\mesh^s\left\norm{\Lap\u\right}{H^s(\Omega)}
\left(\norm{\v}{3/2,\mesh}+\left\norm{\textstyle\dfrac{\v}{\nu}\right}{1/2,\mesh}\right).
\end{equation}
\end{lemma}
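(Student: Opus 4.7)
The plan is to expand the definition of $\inner{\Ep,\v}$ in \eqref{eq:it} as a sum of two boundary integrals, estimate each of them edge-by-edge via Cauchy--Schwarz, and then recombine the terms in exactly the form of the weighted boundary seminorms $\norm{\v}{3/2,\mesh}$ and $\norm{\diff\v/\diff\nu}{1/2,\mesh}$ by balancing the powers of $h_\sigma$. The factor $h_\mesh^s$ will come out of the approximation bound on $\Lap\u-\Pi(\Lap\u)$ given by Lemma~\ref{lem:L2error}, and finiteness of overlap in \eqref{eq:sr} will convert the cell-wise $|\Lap\u|_{H^s(\tau)}^2$ sum into $\norm{\Lap\u}{H^s(\Omega)}^2$.

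First I would treat the easier of the two boundary integrals, namely the one pairing $\Pi(\Lap\u)-\Lap\u$ against $\diff\v/\diff\nu$. On each boundary edge $\sigma\subset\diff\tau$ Cauchy--Schwarz plus \eqref{eq:result2:lem:L2error} of Lemma~\ref{lem:L2error} give
\begin{equation*}
\int_\sigma(\Pi(\Lap\u)-\Lap\u)\,\tfrac{\v}{\nu}
\leq \cproj h_\sigma^{s-1/2}\semi{\Lap\u}{H^s(\tau)}\cdot h_\sigma^{1/2}\bigl(h_\sigma^{-1/2}\norm{\tfrac{\v}{\nu}}{L^2(\sigma)}\bigr),
\end{equation*}
so the integrand on the right is $h_\sigma^s\semi{\Lap\u}{H^s(\tau)}$ times the local weight producing $\norm{\diff\v/\diff\nu}{1/2,\mesh}$. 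Summing over $\sigma\in\bedges_\mesh$, Cauchy--Schwarz on the finite sum plus the finite-intersection part of \eqref{eq:sr} yields the bound $\cproj h_\mesh^s\norm{\Lap\u}{H^s(\Omega)}\norm{\diff\v/\diff\nu}{1/2,\mesh}$.

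The harder term is $\int_\sigma\bigl(\diff\Pi(\Lap\u)/\diff\nu-\diff\Lap\u/\diff\nu\bigr)\v$, which requires estimating the $L^2(\sigma)$-norm of the normal derivative of the projection error $w=\Lap\u-\Pi(\Lap\u)$. Here I would combine the general trace inequality \eqref{eq:GeneralTrace} applied to $\grad w$, which gives
\begin{equation*}
\left\norm{\tfrac{w}{\nu}\right}{L^2(\sigma)}^2\leq\ctrace\bigl(h_\sigma^{-1}\semi{w}{H^1(\tau)}^2+h_\sigma\semi{w}{H^2(\tau)}^2\bigr),
\end{equation*}
with the interior bound \eqref{eq:result1:lem:L2error} of Lemma~\ref{lem:L2error} at $t=1$ and $t=2$; after invoking mesh grading from \eqref{eq:sr} both contributions collapse to $h_\sigma^{2s-3}\semi{\Lap\u}{H^s(\tau)}^2$. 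Pairing with $\v$ via Cauchy--Schwarz then yields on each edge an integrand bounded by $h_\sigma^s\semi{\Lap\u}{H^s(\tau)}\cdot(h_\sigma^{-3/2}\norm{\v}{L^2(\sigma)})$, which is precisely the weight appearing in $\norm{\v}{3/2,\mesh}$. Summing and using Cauchy--Schwarz together with finite overlap completes the bound by $\cproj h_\mesh^s\norm{\Lap\u}{H^s(\Omega)}\norm{\v}{3/2,\mesh}$, and adding the two contributions gives \eqref{eq:Resut:lem:AsymptoticConsistency}.

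The main obstacle I anticipate is the normal-derivative term, both because it demands an $H^1$-type trace of the projection error (forcing the trace inequality plus two applications of Lemma~\ref{lem:L2error}) and because the range-of-$s$ hypothesis in that lemma and the need to take $t=2$ effectively restrict the argument to $s\geq 2$. The same idea still works for smaller $s$ via an interpolation/duality refinement, but in the range relevant to the paper the routine polynomial-approximation argument suffices once the balancing of $h_\sigma$-powers with the mesh-dependent seminorms is identified.
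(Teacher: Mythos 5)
Your proposal is correct and follows essentially the same route as the paper: expand $\inner{\Ep,\v}$ via \eqref{eq:it}, apply Cauchy--Schwarz edge by edge, invoke Lemma~\ref{lem:L2error} for the projection error, and rebalance powers of $h_\sigma$ to produce the weighted seminorms $\norm{\v}{3/2,\mesh}$ and $\norm{\diff\v/\diff\nu}{1/2,\mesh}$. In fact you go slightly further than the paper, which simply asserts the bound $\left\norm{\diff(\Pi(\Lap\u)-\Lap\u)/\diff\n_\sigma\right}{L^2(\sigma)}\lapprox h_\sigma^{s-3/2}\norm{\Lap\u}{H^s(\Omega)}$ as part of Lemma~\ref{lem:L2error}; your derivation of it via the trace inequality \eqref{eq:GeneralTrace} applied to the gradient of the projection error, together with your observation that this implicitly requires $s\ge2$ (consistent with the remark following the lemma), supplies a justification the paper leaves implicit.
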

\begin{proof}
\begin{equation}
\begin{split}
\inner{\Ep,\v}&=\int_\Gamma\left(\textstyle\dfrac{\Pi(\Lap\u)}{\nu}-\dfrac{\Lap\u}{\nu}\right)\v
-\int_\Gamma\left(\Pi(\Lap\u)-\Lap\u\right)\textstyle\dfrac{\v}{\nu}\\
&\leq\sum_{\sigma\in\bedges}\left\norm{\textstyle\dfrac{}{\n_\sigma}\left(\Pi(\Lap\u)-\Lap\u\right)\right}{L^2(\sigma)}\norm{\v}{L^2(\sigma)}
+\sum_{\sigma\in\bedges}\norm{\Pi(\Lap\u)-\Lap\u}{L^2(\sigma)}\left\norm{\textstyle\dfrac{\v}{\n_\sigma}\right}{L^2(\sigma)}
\end{split}
\end{equation}
In view of the projection error analysis of Lemma \eqref{lem:L2error}
\begin{equation}
\norm{\Pi(\Lap\u)-\Lap\u}{L^2(\sigma)}\leq\cproj h_\sigma^{s-1/2}\norm{\Lap\u}{H^s(\Omega)}
\end{equation}
and
\begin{equation}
\left\norm{\textstyle\dfrac{}{\n_\sigma}\left(\Pi(\Lap\u)-\Lap\u\right)\right}{L^2(\sigma)}\leq \cproj h_\sigma^{s-3/2}\norm{\Lap\u}{H^s(\Omega)}
\end{equation}
which leads us to the desired estimate.
\end{proof}
\begin{remark}
It is expected that if $\u\in\bb{E}(\Omega)$ then the regularity parameter $s$ would be at least $2$.
\end{remark}
Let $\Xp^0=\Xp\cap H_0^2(\Omega)$.
We characterize an orthogonal complement $\Xp^\perp$ to $\Xp^0$ using a projection operator $\pi^0_\mesh:\Xp\to\Xp^0$ defined by the linear problem
\begin{equation}
\pi_\mesh\V\in\Xp^0:\quad\ap(\W_0,\V-\pi^0_\mesh\V)=0\quad\forall\W_0\in\Xp^0.
\end{equation}
By setting $\pi_\mesh^\perp\V=\V-\pi_\mesh^0\V$ for any $\V\in\Xp$, we obtain a decompose for every finite-element spline
\begin{equation}
\V=\pi_\mesh^0\V+\pi_\mesh^\perp\V=:\V^0+\V^\perp\in\Xp^0\oplus\Xp^\perp\equiv\Xp
\end{equation}
with $\ap(\V^0,\W^\perp)=0$ for every pair $\V$ and $\W$.
We obtain the following result
\begin{lemma}\label{lem:InconsistentDecay}
If $\U$ is a finite-element spline solution to \eqref{eq:ndp},
\begin{equation}
\enorm{\U^\perp}{\mesh}\leq\frac{1}{\Ccoer}\left(\Ccont\inf_{\V\in\Xp^0}\enorm{\u-\V}{}+\enorm{\Ep}{H^{-2}(\Omega)}\right).
\end{equation}
\end{lemma}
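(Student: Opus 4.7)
The plan is to activate coercivity of $\ap$ on $\U^\perp\in\Xp^\perp$, and then to re-express $\ap(\U^\perp,\U^\perp)$ in a form where Lemma~\ref{lem:ctynbf} (continuity) and Lemma~\ref{lem:ac} (inconsistency) can be applied. First I would invoke Lemma~\ref{lem:csvnbf} to write $\Ccoer\,\enorm{\U^\perp}{\mesh}^2\leq\ap(\U^\perp,\U^\perp)$, and decompose $\U^\perp=\U-\U^0$ to obtain
$$
\ap(\U^\perp,\U^\perp)=\ap(\U,\U^\perp)-\ap(\U^0,\U^\perp).
$$
The second summand vanishes by the defining identity $\ap(\W_0,\V^\perp)=0$ for every $\W_0\in\Xp^0$ (with the choice $\V=\U$), and the first summand equals $\ellf(\U^\perp)$ because $\U$ solves \eqref{eq:ndp} and $\U^\perp\in\Xp$.

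Next I would bring in the continuous solution via Lemma~\ref{lem:ac}, giving $\ellf(\U^\perp)=\ap(\u,\U^\perp)-\inner{\Ep,\U^\perp}$. For an arbitrary $\V\in\Xp^0$, the same $\ap$-orthogonality yields $\ap(\V,\U^\perp)=0$, so
$$
\ap(\U^\perp,\U^\perp)=\ap(\u-\V,\U^\perp)-\inner{\Ep,\U^\perp}.
$$
Continuity of $\ap$ bounds the first term by $\Ccont\,\enorm{\u-\V}{\mesh}\,\enorm{\U^\perp}{\mesh}$; crucially, since $\u-\V\in\Ho(\Omega)$, the boundary contributions inside $\enorm{\cdot}{\mesh}$ vanish and this factor collapses to $\enorm{\u-\V}{}$, which is exactly the norm appearing on the right-hand side of the claim. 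The inconsistency contribution I would estimate by the dual pairing, $|\inner{\Ep,\U^\perp}|\leq\norm{\Ep}{H^{-2}(\Omega)}\,\enorm{\U^\perp}{\mesh}$, using the norm equivalence on $H^2(\Omega)$ promised in Lemma~\ref{lem:normequiv}. Dividing by $\enorm{\U^\perp}{\mesh}$ and taking the infimum over $\V\in\Xp^0$ produces the stated bound.

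The main delicate point is the $\inner{\Ep,\U^\perp}$ estimate, since the dual norm on $\Ep$ is defined against $\norm{\cdot}{H^2(\Omega)}$ rather than against the mesh-dependent energy norm; any proportionality constant arising from Lemma~\ref{lem:normequiv} must be absorbed into the statement (exactly as in the a priori estimate just proved). Everything else is purely structural: the defining $\ap$-orthogonality of $\Xp=\Xp^0\oplus\Xp^\perp$ is used twice in the same way, and the Galerkin-style trick of subtracting an arbitrary $\V\in\Xp^0$ is what converts $\ap(\u,\U^\perp)$ into a best-approximation factor over $\Xp^0$.
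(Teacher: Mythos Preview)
Your proposal is correct and follows essentially the same route as the paper: coercivity on $\U^\perp$, the $\ap$-orthogonality $\ap(\U^0,\U^\perp)=0$ to reduce to $\ap(\U,\U^\perp)=\ellf(\U^\perp)$, then Lemma~\ref{lem:ac} to introduce $\u$ and $\Ep$, a second use of orthogonality to subtract an arbitrary $\V\in\Xp^0$, and finally continuity together with $\enorm{\u-\V}{\mesh}=\enorm{\u-\V}{}$ for $\u-\V\in H_0^2(\Omega)$. The paper's proof is terser but identical in substance; your observation about the hidden norm-equivalence constant in the $\inner{\Ep,\U^\perp}$ estimate is accurate and is likewise glossed over in the paper.
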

\begin{proof}
Let $\U$ be the solution to \eqref{eq:dnp}.
\begin{equation}
\ap(\u-\U^\perp,\U^\perp)=\ap(\u-\U,\U^\perp)=\inner{\Ep,\U^\perp}.
\end{equation}
Then
\begin{equation}
\begin{split}
\Ccoer\enorm{\U^\perp}{\mesh}^2&\leq\ap(\u,\U^\perp)-\inner{\Ep,\U^\perp}
=\ap(\u-\V^0,\U^\perp)-\inner{\Ep,\U^\perp},
\end{split}
\end{equation}
and reconize that $\enorm{\v}{\mesh}\equiv\enorm{\v}{}$ whenever $\v\in H^2_0(\Omega)$.
\end{proof}
\begin{remark}
If $\u\in H^{s+2}$ for any $s>0$,
\begin{equation}
\enorm{\Ep}{H^{-2}(\Omega)}\leq \cproj h_\mesh^{s}\norm{\Lap\u}{H^s(\Omega)}
\quad\text{and}\quad
\inf_{\V\in\Xp^0}\enorm{\u-\V}{}\lapprox h_\mesh^{s}\enorm{\u}{}
\end{equation}
which makes for a constant $C$
\begin{equation}
\enorm{\U^\perp}{\mesh}\leq Ch_\mesh^s\norm{\Lap\u}{H^s(\Omega)}
\end{equation}
\end{remark}
\begin{lemma}\label{lem:normequiv}
\end{lemma}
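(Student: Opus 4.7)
The plan is to establish that $\enorm{\cdot}{\mesh}$ is a genuine norm on $H^2(\Omega)$ and, in line with the preceding remark, to supplement this with a lower bound of the form $\norm{\v}{H^2(\Omega)} \lapprox \enorm{\v}{\mesh}$ with constant independent of $\mesh$, so that convergence in $\enorm{\cdot}{\mesh}$ entails convergence in the standard $H^2$-topology. The argument naturally splits into a definiteness step and a quantitative control step.

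To verify the norm property, observe that homogeneity and the triangle inequality are immediate from \eqref{eq:meshnorm}; the only nontrivial point is positive-definiteness. Suppose $\enorm{\v}{\mesh}=0$. Each of the three non-negative summands must vanish. The interior term forces $\Lap\v=0$ in $L^2(\Omega)$, so $\v$ is harmonic in $\Omega$. Since $\gamma_1>0$ and $h_\sigma>0$ for every $\sigma\in\bedges_\mesh$, the weighted $3/2$-term forces $\norm{\v}{L^2(\sigma)}=0$ on every boundary edge, hence $\v|_\Gamma=0$. Uniqueness for the harmonic Dirichlet problem now yields $\v\equiv 0$, so $\enorm{\cdot}{\mesh}$ separates points and is a full norm on $H^2(\Omega)$.

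For the quantitative equivalence, the starting point is the polygonal-domain regularity estimate
\begin{equation*}
\norm{\v}{H^2(\Omega)}^2\lapprox\norm{\Lap\v}{L^2(\Omega)}^2+\norm{\v}{H^{3/2}(\Gamma)}^2+\norm{\dfrac{\v}{\nu}}{H^{1/2}(\Gamma)}^2,
\end{equation*}
combined with a splitting $\v=\v_0+\v_1$ in which $\v_0\in\Ho(\Omega)$ absorbs the bulk and $\v_1$ is a stable biharmonic lifting of the traces into $H^2(\Omega)$. On $\v_0$ the classical estimate $\norm{\v_0}{H^2(\Omega)}\lapprox\norm{\Lap\v_0}{L^2(\Omega)}$ is at our disposal; on the lifting, $\norm{\v_1}{H^2(\Omega)}\lapprox\norm{\v_1}{H^{3/2}(\Gamma)}+\norm{\dfrac{\v_1}{\nu}}{H^{1/2}(\Gamma)}$, and it remains to dominate the fractional trace norms by the mesh-weighted boundary quantities $\norm{\v}{3/2,\mesh}$ and $\norm{\dfrac{\v}{\nu}}{1/2,\mesh}$ appearing in $\enorm{\cdot}{\mesh}$.

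The main obstacle is precisely this last reduction. The naive bound $\norm{w}{H^{3/2}(\Gamma)}\lapprox h^{-3/2}\norm{w}{L^2(\Gamma)}$ is only valid for piecewise polynomial $w$ via the inverse inequalities of Lemma~\ref{lem:ie}, and fails for generic $\v\in H^2(\Omega)$ as $h_\mesh\to 0$. The resolution is to exploit the finiteness of $\enorm{\v}{\mesh}$ itself: it enforces edgewise $L^2$-control of the trace with the weights $h_\sigma^{-3}$, which, combined with Hilbertian interpolation between $L^2(\sigma)$ and a higher Sobolev scale on each edge and the shape-regularity \eqref{eq:sr} of the admissible partitions of Section~\ref{sec:part}, converts the weighted $L^2$-control into the required $H^{3/2}(\Gamma)$ bound. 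The resulting constant depends only on $\cshape$, $\gamma_1$, $\gamma_2$, and $\Omega$, giving the desired mesh-independent equivalence and justifying the remark following \eqref{eq:meshnorm}.
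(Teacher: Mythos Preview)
Your definiteness argument (harmonic plus zero Dirichlet trace forces $\v\equiv 0$) is correct and is essentially what the paper intends: it phrases the same observation as ``$\v$ is the unique solution to a Poisson problem''.

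The quantitative step, however, overshoots what is true and what the paper actually claims. A mesh-independent bound
\[
\norm{\v}{H^2(\Omega)}\;\lapprox\;\enorm{\v}{\mesh}\qquad\text{for all }\v\in H^2(\Omega)
\]
is false. On a fixed coarse mesh with $h_\sigma\approx 1$ it would read
$\norm{\v}{H^2(\Omega)}\lapprox\norm{\Lap\v}{L^2(\Omega)}+\norm{\v}{L^2(\Gamma)}+\norm{\partial_\nu\v}{L^2(\Gamma)}$,
and this is violated by harmonic functions with highly oscillatory boundary traces (the $H^{3/2}(\Gamma)$-norm of the trace can be made arbitrarily large while the $L^2(\Gamma)$-norms stay bounded). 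Your proposed ``Hilbertian interpolation'' cannot repair this: interpolation requires control at \emph{two} endpoints of a scale, and the weighted $L^2$ boundary terms in $\enorm{\cdot}{\mesh}$ supply only one. No amount of shape-regularity turns edgewise $h_\sigma^{-3}\norm{\v}{L^2(\sigma)}^2$ into $H^{3/2}$-control for a generic $\v\in H^2(\Omega)$.

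The paper avoids this trap by restricting the quantitative bound to the discrete component $\U^\perp\in\Xp$. For piecewise polynomials the inverse inequalities of Lemma~\ref{lem:ie} do yield $\norm{\V}{H^{3/2}(\sigma)}\lapprox h_\sigma^{-3/2}\norm{\V}{L^2(\sigma)}$, and hence $\norm{\U^\perp}{H^2(\Omega)}\leq\cnorm\enorm{\U^\perp}{\mesh}$ with $\cnorm$ depending only on $\cshape$ and the polynomial degree. The error is then handled through the splitting $\norm{\u-\U}{H^2(\Omega)}\leq\enorm{\u-\U^0}{}+\norm{\U^\perp}{H^2(\Omega)}$, where the first summand lives in $H^2_0(\Omega)$ (so $\enorm{\cdot}{\mesh}$ and $\norm{\cdot}{H^2}$ agree) and the second is controlled via Lemma~\ref{lem:InconsistentDecay}. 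In short: keep your definiteness proof, but replace the attempted equivalence on all of $H^2(\Omega)$ by the discrete inverse-estimate argument for $\U^\perp$.
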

\begin{proof}
$\v\in H^2$ then $\enorm{\v}{\mesh}<\infty$.
On the other hand if $\Lap\v\in L^2(\Omega)$ and the traces $\v|_\Gamma$ and $\dfrac{\v}{\nu}|_\Gamma$ belong to $L^2(\Gamma)$ we then $\v$ is a unique solution to a Poisson problem and
\begin{equation}
\end{equation}
so $\norm{\U^\perp}{H^2(\Omega)}\leq\cnorm\enorm{\U^\perp}{\mesh}$ and
\begin{equation}
\norm{\u-\U}{H^2(\Omega)}\leq\enorm{\u-\U^0}{}+\norm{\U^\perp}{H^2(\Omega)}
\leq\enorm{\u-\U^0}{}+Ch_\mesh^s\norm{\Lap\u}{H^s(\Omega)}
\end{equation}
\end{proof}
\section{A posteriori estimates}\label{sec:post}	
In this section we will derive the a posteriori error estimates for \eqref{eq:dnp} needed to to ensure the convergence of solutions $\U$ generated by the iterative procedure \eqref{eq:afem} to the the weak solution $\u$ of \eqref{eq:cwp}.
\subsection{The global upper bound}
We prove that the proposed error estimator is reliable.
\begin{lemma}[Estimator reliability]\label{lem:er}
Let $\mesh$ be a partition of $\Omega$ satisfying Conditions \eqref{eq:sr}.
The module $\ms{ESTIMATE}$ produces a posteriori error estimate $\eta_\mesh$ for the discrete error such that for a constants $C_\rm{rel,1},C_\rm{rel,2}>0$,
\begin{equation}\label{eq:res:lem:er}
\begin{split}
\ap(\u-\U,\u-\U)&\leq C_\rm{rel,1}\eta_\mesh^2(\U,\Omega)+\norm{\Ep}{H^{-2}(\Omega)}\\
&\quad+C_\rm{rel,2}\left(\gamma_1\norm{\U}{3/2,\mesh}^2+\gamma_2\left\norm{\textstyle\dfrac{\U}{\nu}\right}{1/2,\mesh}^2\right),
\end{split}
\end{equation}
with constants depending only on $\cshape$.
\end{lemma}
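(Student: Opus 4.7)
The plan is a residual-based reliability argument tailored to the inconsistent Nitsche form. Set $e = \u - \U \in H^2(\Omega)$. For any $\V \in \Xp$, the inconsistency identity of Lemma~\ref{lem:ac} together with the discrete equation \eqref{eq:dnp} yields
\[
\ap(e,e) = \ellf(e-\V) - \ap(\U, e-\V) + \inner{\Ep, e}.
\]
I would take $\V = \Ip e$ and set $\w = e - \Ip e$. The quasi-interpolation estimates from Section~\ref{sec:part} provide
\[
\norm{\w}{L^2(\cell)} \lapprox h_\cell^2\semi{e}{H^2(\omega_\cell)}, \qquad
\norm{\w}{L^2(\edge)} \lapprox h_\edge^{3/2}\semi{e}{H^2(\omega_\edge)},
\]
\[
\norm{\diff_\nu\w}{L^2(\edge)} \lapprox h_\edge^{1/2}\semi{e}{H^2(\omega_\edge)}, \qquad
\norm{\Lap\w}{L^2(\cell)}\lapprox\semi{e}{H^2(\omega_\cell)},
\]
which in particular imply $\norm{\w}{3/2,\mesh} + \norm{\diff_\nu\w}{1/2,\mesh} \lapprox \semi{e}{H^2(\Omega)}$.

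Next, cellwise integration by parts in $\int_\Omega \Lap\U\,\Lap\w$ exposes the bulk residual $f - \Op\U$, the interior edge jumps $\jump{\Lap\U}{\edge}$ and $\jump{\diff_\n\Lap\U}{\edge}$, and boundary traces of $\Lap\U$, $\diff_\nu\Lap\U$. The key observation is that $\Lap\U$ is piecewise polynomial of degree $r-2$, so $\Pip(\Lap\U) = \Lap\U$; hence the boundary traces of $\Lap\U$ produced by integration by parts cancel exactly the $\Pip(\Lap\U)$-terms embedded in $\ap(\U,\w)$. What remains on $\Gamma$ are only four integrals involving traces of $\U$ itself:
\[
\int_\Gamma \Pip(\Lap\w)\,\diff_\nu\U,\ \ \int_\Gamma \diff_\nu\Pip(\Lap\w)\,\U,\ \ \gamma_1\!\int_\Gamma h_\Gamma^{-3}\U\w,\ \ \gamma_2\!\int_\Gamma h_\Gamma^{-1}\diff_\nu\U\,\diff_\nu\w.
\]

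The remaining bounds are mechanical. Cellwise and edgewise Cauchy--Schwarz applied to the bulk and jump sums, combined with the scalings above, produce a contribution $\lapprox \etap(\U,\Omega)\,\semi{e}{H^2(\Omega)}$ whose weights match \eqref{eq:indicator} exactly. For the two $\Pip(\Lap\w)$-boundary integrals, Lemma~\ref{lem:L2estimates} applied to $\Lap\w$ gives $\norm{\Pip(\Lap\w)}{L^2(\edge)} \lapprox h_\edge^{-1/2}\norm{\Lap\w}{L^2(\cell)}$ and $\norm{\diff_\nu\Pip(\Lap\w)}{L^2(\edge)} \lapprox h_\edge^{-3/2}\norm{\Lap\w}{L^2(\cell)}$, and a further edgewise Cauchy--Schwarz returns $\lapprox (\norm{\U}{3/2,\mesh} + \norm{\diff_\nu\U}{1/2,\mesh})\,\semi{e}{H^2(\Omega)}$. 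The penalty integrals split by weighted Cauchy--Schwarz into $\gamma_1\norm{\U}{3/2,\mesh}\norm{\w}{3/2,\mesh}$ and $\gamma_2\norm{\diff_\nu\U}{1/2,\mesh}\norm{\diff_\nu\w}{1/2,\mesh}$, both of which further reduce to the same $\U$-mesh-norms against $\semi{e}{H^2(\Omega)}$ via the $\w$-bounds. Finally $|\inner{\Ep,e}| \leq \norm{\Ep}{H^{-2}(\Omega)}\norm{e}{H^2(\Omega)}$.

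Collecting and using $\norm{e}{H^2(\Omega)} \lapprox \enorm{e}{\mesh}$ gives $\ap(e,e) \lapprox X \cdot \enorm{e}{\mesh}$ where $X^2$ equals the right-hand side of \eqref{eq:res:lem:er}; coercivity (Lemma~\ref{lem:csvnbf}) then absorbs one factor of $\enorm{e}{\mesh}$ against $\ap(e,e)^{1/2}$ and the estimate follows after squaring. I expect the main obstacle to be the integration-by-parts bookkeeping of the middle step: verifying the crucial cancellation $\Pip(\Lap\U) = \Lap\U$ on $\Gamma$ and then invoking Lemma~\ref{lem:L2estimates} with precisely the right mesh weights so that $\gamma_1$ and $\gamma_2$ enter only inside the explicit quadratic factors of the $\U$-boundary terms, leaving $C_\rm{rel,1}$ and $C_\rm{rel,2}$ dependent only on $\cshape$.
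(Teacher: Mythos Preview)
Your proposal is correct and follows essentially the same route as the paper: expand the residual via the inconsistency identity, insert the quasi-interpolant $\Ip e$, integrate by parts cellwise to expose the bulk and jump indicators, and control the remaining $\Gamma$-integrals with Lemma~\ref{lem:L2estimates}; the only cosmetic difference is that the paper carries $\inner{\Ep,\Ip e}$ while you carry $\inner{\Ep,e}$, which are algebraically equivalent, and you make the cancellation $\Pip(\Lap\U)=\Lap\U$ explicit whereas the paper uses it silently.
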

\begin{proof}
In this proof we will derive a localized quantification for the residual $\scr{R}_\mesh$.
In view of definition \eqref{eq:residual} we follow standard procedure and integrate by parts to obtain
\begin{equation}
\begin{split}
\inner{\scr{R}_\mesh,\v}&=\sum_{\tau\in\mesh}\left(
\int_\tau (f-\Op\U)\v
+\int_{\diff\tau}\Lap\U{\textstyle\dfrac{\v}{\n_\tau}}
-\int_{\diff\tau}{\textstyle\dfrac{\Lap\U}{\n_\tau}\v}\right)\\
&\quad-\int_\Gamma\left(\Lap\U{\textstyle\dfrac{\v}{\nu}}+{\textstyle\dfrac{\U}{\nu}}\Pi(\Lap\v)\right)
-\gamma_1\int_\Gamma h_\Gamma^{-3}\U\v\\
&\quad+\int_\Gamma\left(\textstyle{\dfrac{\Lap\U}{\nu}\v+\U\dfrac{\Pi(\Lap\v)}{\nu}}\right)
-\gamma_2\int_\Gamma h_\Gamma^{-1}{\textstyle\dfrac{\U}{\nu}\dfrac{\v}{\nu}}.
\end{split}
\end{equation}
We express all the integrals over cell boundaries as integrals over edges,
\begin{equation}
\begin{split}
\sum_{\tau\in\mesh}\left(\int_{\diff\tau}\Lap\U{\textstyle\dfrac{\v}{\n_\tau}}
-\int_{\diff\tau}{\textstyle\dfrac{\Lap\U}{\n_\tau}\v}\right)
&-\int_\Gamma\Lap\U{\textstyle\dfrac{\v}{\nu}}
+\int_\Gamma{\textstyle{\dfrac{\Lap\U}{\nu}\v}},\\
&=\sum_{\sigma\in\edges_\mesh}\left(\int_\sigma\jump{\textstyle\dfrac{\Lap\U}{\n_\sigma}}{\sigma}\v
-\int_\sigma\jump{\Lap\U}{\sigma}{\textstyle\dfrac{\v}{\n_\sigma}}\right).
\end{split}
\end{equation}
Asymptotic consistency of the discretization; see \eqref{eq:res:lem:ac}, enables us to express
\begin{equation}
\ap(\u-\U,\v)=\ap(\u-\U,\v-\Ip\v)+\inner{\Ep,\Ip\v},
\end{equation}
which will provide a sharp upper-bound estimate for residual.
We have
\begin{equation}\label{eq1:lem:er}
\begin{split}
|\!\inner{\scr{R}_\mesh,\v}\!|\leq&\sum_{\tau\in\mesh}\norm{f-\Op\U}{L^2(\tau)}\norm{\v-\Ip\v}{L^2(\tau)}
+\sum_{\sigma\in\edges_\mesh}\left\norm{\jump{\textstyle\dfrac{\Lap\U}{\n_\sigma}}{\sigma}\right}{L^2(\sigma)}\left\norm{\textstyle(\v-\Ip\v)\right}{L^2(\sigma)}\\
&+\sum_{\sigma\in\edges_\mesh}\norm{\jump{\Lap\U}{\sigma}}{L^2(\sigma)}\left\norm{{\textstyle\dfrac{}{\n_\sigma}}(\v-\Ip\v)\right}{L^2(\sigma)}
+\left|\int_\Gamma{\textstyle\dfrac{\U}{\nu}}\Pi[\Lap(\v-\Ip\v)]\right|
+|\inner{\Ep,\Ip\v}|\\
&+\left|\int_\Gamma\U{\textstyle\dfrac{\Pi[\Lap(\v-\Ip\v)]}{\nu}}\right|
+\gamma_1\left|\int_\Gamma h_\Gamma^{-3}\U(\v-\Ip\v)\right|
+\gamma_2\left|\int_\Gamma h_\Gamma^{-1}{\textstyle\dfrac{\U}{\nu}\dfrac{}{\nu}(\v-\Ip\v)}\right|.
\end{split}
\end{equation}
We define the interior residual $R=f-\Op\U$ and jump terms $J_1=\textstyle\jump{\dfrac{\Lap\U}{\n_\sigma}}{\sigma}$ and $J_2=\jump{\Lap\U}{\sigma}$ across  each interior edge $\sigma$.
Starting with the first three terms in \eqref{eq1:lem:er}, we use the approximation results from Lemma \ref{lem:ie} to estimate interior residual terms
\begin{equation}
\begin{split}
\sum_{\tau\in\mesh}\norm{R}{L^2(\tau)}\norm{\v-\Ip\v}{L^2(\tau)}
&\leq\sum_{\tau\in\mesh}\norm{R}{L^2(\tau)} \cproj h_\tau^2\norm{\v}{L^2(\omega_\tau)},\\
&\leq \cproj \bigg(\sum_{\tau\in\mesh}h_\tau^4\norm{R}{L^2(\tau)}^4\bigg)^{1/2}
\bigg(\sum_{\tau\in\mesh}\norm{\v}{H^2(\omega_\tau)}^2\bigg)^{1/2}.
\end{split}
\end{equation}
As for the interior edge jump terms,
\begin{equation}
\begin{split}
\sum_{\sigma\in\edges_\mesh}\left\norm{J_1\right}{L^2(\sigma)}\left\norm{\textstyle(\v-\Ip\v)\right}{L^2(\sigma)}
&\leq\sum_{\sigma\in\edges_\mesh}\left\norm{J_1\right}{L^2(\sigma)}\cproj h_\sigma^{3/2}\norm{\v}{L^2(\omega_\sigma)},\\
&\leq \cproj \bigg(\sum_{\sigma\in\edges_\mesh}h_\sigma^3\norm{J_1}{L^2(\sigma)}^2\bigg)^{1/2}
\bigg(\sum_{\sigma\in\edges_\mesh}\norm{\v}{H^2(\omega_\sigma)}^2\bigg)^{1/2},
\end{split}
\end{equation}
and
\begin{equation}
\begin{split}
\sum_{\sigma\in\edges_\mesh}\left\norm{J_2\right}{L^2(\sigma)}\left\norm{{\textstyle\dfrac{}{\n_\sigma}}(\v-\Ip\v)\right}{L^2(\sigma)}
&\leq\sum_{\sigma\in\edges_\mesh}\left\norm{J_2\right}{L^2(\sigma)}\cproj h_\sigma^{1/2}\norm{\v}{L^2(\omega_\sigma)},\\
&\leq \cproj \bigg(\sum_{\sigma\in\edges_\mesh}h_\sigma\norm{J_2}{L^2(\sigma)}^2\bigg)^{1/2}
\bigg(\sum_{\sigma\in\edges}\norm{\v}{H^2(\omega_\sigma)}^2\bigg)^{1/2}.
\end{split}
\end{equation}
From the finite-intersection property \eqref{eq:sr} we have $\sum_{\sigma\in\edges}
\norm{\v}{H^2(\omega_\sigma)}^2\leq \cshape\norm{\v}{H^2(\Omega)}^2$.
Proceeding to the treatment of domain boundary integrals,
\begin{equation}
\begin{split}
\left|\int_\Gamma{\textstyle\dfrac{\U}{\nu}}\Pi[\Lap(\v-\Ip\v)]\right|
&\leq\sum_{\sigma\in\bedges_\mesh}\left\norm{\textstyle\dfrac{\U}{\n_\sigma}\right}{L^2(\sigma)}\norm{\Pi[\Lap(\v-\Ip\v)]}{L^2(\sigma)},\\
&\leq\cdtrace\sum_{\sigma\in\bedges_\mesh}\left\norm{\textstyle\dfrac{\U}{\n_\sigma}\right}{\sigma}h_\sigma^{-1/2}\norm{\Lap(\v-\Ip\v)}{L^2(\tau(\sigma))},\\
&\leq\cproj \cdtrace\bigg(\sum_{\sigma\bedges_\mesh}h_\sigma^{-1}\left\norm{\textstyle\dfrac{\U}{\n_\sigma}\right}{\sigma}^2\bigg)^{1/2}
\bigg(\sum_{\sigma\in\bedges_\mesh}\norm{\v}{H^2(\omega_\tau)}^2\bigg)^{1/2},
\end{split}
\end{equation}
where $\tau(\sigma)$ is the boundary adjacent cell with edge $\sigma$.
Similarly,
\begin{equation}
\begin{split}
\left|\int_\Gamma\U{\textstyle\dfrac{\Pi[\Lap(\v-\Ip\v)]}{\nu}}\right|
&\leq\sum_{\sigma\in\bedges_\mesh}\norm{\U}{L^2(\sigma)}\left\norm{\textstyle\dfrac{}{\n_\sigma}\Pi[\Lap(\v-\Ip\v)]\right}{L^2(\sigma)},\\
&\leq\cinv\cdtrace\sum_{\sigma\in\bedges_\mesh}\norm{\U}{L^2(\sigma)}h_\sigma^{-3/2}\norm{\Lap(\v-\Ip\v)}{L^2(\tau(\sigma))},\\
&\leq \cproj \cinv\cdtrace\bigg(\sum_{\sigma\in\bedges_\mesh}h_\sigma^{-3}\norm{\U}{L^2(\sigma)}^2\bigg)^{1/2}\bigg(\sum_{\sigma\in\bedges_\mesh}\norm{\v}{H^2(\tau(\sigma))}^2\bigg)^{1/2}.
\end{split}
\end{equation}
and
\begin{equation}
\begin{split}
\left|\int_\Gamma h_\Gamma^{-3}\U(\v-\Ip\v)\right|
&\leq\sum_{\sigma\in\bedges_\mesh}h_\sigma^{-3/2}\norm{\U}{L^2(\sigma)}h_\sigma^{-3/2}\norm{\v-\Ip\v}{L^2(\sigma)},\\
&\leq\bigg(\sum_{\sigma\in\bedges_\mesh}h_\sigma^{-3}\norm{\U}{L^2(\sigma)}^2\bigg)^{1/2}\bigg(\sum_{\sigma\in\bedges_\mesh}h_\sigma^{-3}\norm{\v-\Ip\v}{L^2(\sigma)}^2\bigg)^{1/2},\\
&\leq \cproj \norm{\U}{3/2,\mesh}\bigg(\sum_{\sigma\in\bedges_\mesh}\norm{\v}{H^2(\omega_\sigma)}^2\bigg)^{1/2},
\end{split}
\end{equation}
and finally,
\begin{equation}
\left|\int_\Gamma h_\Gamma^{-1}{\textstyle\dfrac{\U}{\nu}\dfrac{}{\nu}(\v-\Ip\v)}\right|
\leq \cproj \left\norm{\textstyle\dfrac{\U}{\nu}\right}{1/2,\mesh}\bigg(\sum_{\sigma\in\bedges_\mesh}\norm{\v}{H^2(\omega_\sigma)}^2\bigg)^{1/2}.
\end{equation}
Invoking finite-intersection property \eqref{eq:sr} we have $\sum_{\sigma\in\bedges}\norm{\v}{H^2(\tau(\sigma))}^2\leq \cshape\norm{\v}{H^2(\Omega)}^2$.
Summing up we arrive at
\begin{equation}
\begin{split}
|\!\inner{\scr{R}_\mesh,\v}\!|&\leq \cproj \left\{\bigg(\sum_{\tau\in\mesh}h_\tau^4\norm{R}{L^2(\tau)}^2\bigg)^{1/2}
+\bigg(\sum_{\sigma\in\edges_\mesh}h_\sigma^3\left\norm{J_1\right}{L^2(\sigma)}^2\bigg)^{1/2}\right.\\
&\quad\left.+\bigg(\sum_{\sigma\in\edges_\mesh}h_\sigma\norm{J_2}{L^2(\sigma)}^2\bigg)^{1/2}
\right\}\norm{\v}{H^2(\Omega)}+\norm{\Ep}{H^{-2}(\Omega)}\norm{\v}{H^2(\Omega)}\\
&\quad+\cproj \left\{(\cinv\cdtrace+\gamma_1)\norm{\U}{3/2,\mesh}+(\cdtrace+\gamma_2)\left\norm{\textstyle\dfrac{\U}{\nu}\right}{1/2,\mesh}\right\}\norm{\v}{H^2(\Omega)}
\end{split}
\end{equation}
So we have
\begin{equation}
\begin{split}
\frac{|\!\inner{\scr{R}_\mesh,\v}\!|}{\norm{\v}{H^2(\Omega)}}&\leq\cproj\eta_\mesh(\U,\Omega)+\norm{\Ep}{H^{-2}(\Omega)}\\
&+\cproj\left\{(\cinv\cdtrace+\gamma_1)\norm{\U}{3/2,\mesh}+(\cdtrace+\gamma_2)\left\norm{\textstyle\dfrac{\U}{\nu}\right}{1/2,\mesh}\right\}
\end{split}
\end{equation}
\end{proof}

\subsection{Global lower bound}
We will define extension operators $E_\sigma:C(\sigma)\to C(\tau)$ for all edges $\sigma$ with $\sigma\subset\diff\tau$.
Let $\hat{\tau}=[0,1]\times[0,1]$ and $\hat{\sigma}=[0,1]\times\{0\}$.
Let $F_\tau:\bb{R}^2\to\bb{R}^2$ be the affine transformation comprising of translation and scaling mapping $\hat{\tau}$ onto $\tau$ and $\hat{\sigma}$ onto $\sigma$.
Define $\hat{E}:C(\hat{\sigma})\to C(\hat{\tau})$ via
\begin{equation}
\hat{E}v(x,y)=v(x)\quad \forall x\in\hat{\sigma},\quad(x,y)\in\hat{\tau},\quad v\in C(\hat{\sigma}).
\end{equation}
To this end, let $\sigma$ be an edge of a cell $\tau\in\mesh$, then define $E_\sigma:C(\sigma)\to C(\tau)$ via
\begin{equation}
E_\sigma v:=[\hat{E}(v\circ F_\tau)]\circ F_\tau^{-1}.
\end{equation}
In other words extending the values of $\v$ from $\sigma$ into $\tau$ along inward $\n_\sigma$.
Let $\psi_\tau$ be any smooth cut-off function with the following properties:
\begin{equation}\label{eq:interiorbubble}
\supp\psi_\tau\subseteq\tau,
\quad\psi_\tau\ge0,
\quad\max_{x\in\tau}\psi_\tau(x)\leq1.
\end{equation}
Furthermore, we will define for $\sigma=\diff\tau_1\cap\diff\tau_2=:D_\sigma$ two $C^1$ cut-off functions $\psi_\sigma$ and $\chi_\sigma$ via
with the following
\begin{equation}
\quad\supp\chi_\sigma\subseteq D_\sigma,
\quad\supp\psi_\sigma\subseteq D_\sigma,
\quad\psi_\tau\ge0,
\quad\max_{x\in\tau}\psi_\tau(x)\leq1,
\end{equation}
such that
\begin{equation}
\textstyle\ \dfrac{\psi_\sigma}{\n_\sigma}\equiv0,
\quad\chi_\sigma\equiv0
\quad \text{and}\quad d_1h_\sigma^{-1}\psi_\sigma\leq\dfrac{\chi_\sigma}{\n_\sigma}\leq d_2h_\sigma^{-1}\psi_\sigma\quad \text{along}\ \sigma.
\end{equation}
Let $\hat{\sigma}$ and $\hat{\tau}$ retain the same meanings as before.
Let $\hat{\tau}_1=\tau$ and let $\hat{\tau}_2=\{(x,y)\in\bb{R}^2:(x,-y)\in\hat{\tau}\}$ and let $\n=(0,-1)$
Now let $F_\sigma:\bb{R}^2\to\bb{R}^2$ be the affine map that maps $\hat{D}$ onto $D_\sigma$ and define
\begin{equation}\label{eq:edgebubbles}
\psi_\sigma=\hat{\psi}\circ F_\sigma^{-1},\quad
\chi_\sigma=\hat{\chi}\circ F_\sigma^{-1}.
\end{equation}
unit normal vector on $\hat{\sigma}$.
Let $g(y)$ be the cubic polynomial satisfying
\begin{equation}
g^{(4)}(y)=0\ \text{for}\ y\in(0,1)
\ \text{such that}\
g'(0)=g(1)=g'(1)=0\ \text{and}\ g(0)=1.
\end{equation}
Put $\phi(y)=g(y)\ds{1}_{[0,1]}+g(-y)\ds{1}_{[-1,0]}$.
Now define $G$ to be the quartic polynomial satisfying
\begin{equation}
G^{(5)}(y)=0\ \text{for}\ y\in(0,1)\ \text{such that}\
G(0)=G(1)=G'(1)=0\
\text{and}\ G'(0)=\phi(0).
\end{equation}
Put $\eta(y)=G(y)\ds{1}_{[0,1]}-G(-y)\ds{1}_{[-1,0]}$.
Finally, let $H=x^2(1-x)^2(1-y^2)^2$ and let $\Phi(x,y)=H(x,y)\ds{1}_{y\ge0}+H(x,-y)\ds{1}_{y\leq 0}$.
Finally set
\begin{equation}
\hat{\psi}(x,y)=\Phi(x,y)\phi(y),\quad\hat{\chi}(x,y)=\Phi(x,y)\eta(y),\quad(x,y)\in\hat{D}
\end{equation}
\begin{lemma}[Localizing estimates]\label{lem:LocalizingEstimates}
Let $\mesh$ be a partition of $\Omega$ satisfying Conditions \eqref{eq:sr}.
Let $\tau$ be a cell in partition $\mesh$.
For a constant $c_m>0$ depending only on polynomial degree $m$,
\begin{equation}\label{eq1:result:lem:LocalizingEstimates}
\norm{q}{L^2(\tau)}^2\leq \cc\int_\tau\psi_\tau q^2\quad\forall q\in\bb{P}_m(\tau),
\end{equation}
Let $\sigma$ be an edge in $\edges_\mesh$ and let $\tau_1$ and $\tau_2$ be cells from $\mesh$ for which $\sigma\subseteq\overline{\tau_1}\cap\overline{\tau_2}$.
We also have
\begin{equation}\label{eq2:result:lem:LocalizingEstimates}
\norm{q}{L^2(\sigma)}^2\leq \cc\int_\sigma\psi_\sigma q^2
\end{equation}
and
\begin{equation}\label{eq3:result:lem:LocalizingEstimates}
\norm{\psi_\sigma^{1/2} E_\sigma q}{L^2(\tau)}\leq \cc h_\sigma^{1/2}\norm{q}{L^2(\sigma)}\quad\forall\tau\in D_\sigma
\end{equation}
holding for every $q\in\bb{P}_m(\sigma)$.
\end{lemma}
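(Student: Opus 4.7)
The plan is to reduce each of the three estimates to a norm equivalence on a finite-dimensional polynomial space over the reference cell $\hat\tau=[0,1]^2$ or reference edge $\hat\sigma=[0,1]$, and then transfer back using the affine maps $F_\tau$ and $F_\sigma$. Because the weight functions $\psi_\tau,\psi_\sigma,\chi_\sigma$ are all pulled back from fixed templates on the reference configuration, the resulting constants will depend only on the polynomial degree $m$ and on $\cshape$, as required.

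For \eqref{eq1:result:lem:LocalizingEstimates} I would set $\hat q=q\circ F_\tau\in\bb{P}_m(\hat\tau)$ and $\hat\psi_{\hat\tau}=\psi_\tau\circ F_\tau$. A change of variables gives $\norm{q}{L^2(\tau)}^2=|J_\tau|\,\norm{\hat q}{L^2(\hat\tau)}^2$ and $\int_\tau\psi_\tau q^2=|J_\tau|\int_{\hat\tau}\hat\psi_{\hat\tau}\hat q^2$, so the claim reduces to
\begin{equation}
\norm{\hat q}{L^2(\hat\tau)}^2\leq \cc\int_{\hat\tau}\hat\psi_{\hat\tau}\,\hat q^2\qquad\forall\hat q\in\bb{P}_m(\hat\tau).
\end{equation}
Since $\hat\psi_{\hat\tau}$ is strictly positive on the interior of $\hat\tau$, the map $\hat q\mapsto\bigl(\int_{\hat\tau}\hat\psi_{\hat\tau}\hat q^2\bigr)^{1/2}$ defines a norm on the finite-dimensional space $\bb{P}_m(\hat\tau)$ and is therefore equivalent to the $L^2(\hat\tau)$ norm; the equivalence constant depends only on $m$.

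The edge estimate \eqref{eq2:result:lem:LocalizingEstimates} is handled in the same way on $\hat\sigma$. Pulling back via $F_\sigma$, the restriction of $\hat\psi$ to $\hat\sigma$ equals (up to constants from $g$) $x^2(1-x)^2$, which is strictly positive in the interior of $\hat\sigma$; finite-dimensional norm equivalence on $\bb{P}_m(\hat\sigma)$ then gives the reference inequality, and scaling transfers it with a Jacobian factor that cancels from both sides. For the extension estimate \eqref{eq3:result:lem:LocalizingEstimates} the extra $h_\sigma^{1/2}$ comes from anisotropic scaling: $|J_\tau|\approx h_\sigma^2$ while the one-dimensional Jacobian on the edge is $|J_\sigma|\approx h_\sigma$, so $|J_\tau|/|J_\sigma|\approx h_\sigma$. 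It therefore suffices to prove the dimensionless reference inequality
\begin{equation}
\norm{\hat\psi^{1/2}\hat E\hat q}{L^2(\hat\tau)}\lapprox\norm{\hat q}{L^2(\hat\sigma)}\qquad\forall\hat q\in\bb{P}_m(\hat\sigma),
\end{equation}
which follows because both sides are norms on the finite-dimensional image $\hat E(\bb{P}_m(\hat\sigma))\subset C(\hat\tau)$ (the left side is a norm by positivity of $\hat\psi$ on the interior, together with injectivity of $\hat E$).

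The main obstacle is really bookkeeping: ensuring that the cut-offs $\hat\psi_{\hat\tau},\hat\psi,\hat\chi$ defined at the end of the setup are indeed strictly positive on the interior of their supports, since without strict positivity the candidate expressions would only be seminorms and the norm-equivalence argument would fail. Once this positivity is verified from the explicit construction (\eqref{eq:edgebubbles} together with the definitions of $\Phi$, $\phi$, $\eta$), all three estimates follow by the common template of reference-element norm equivalence plus affine scaling, with $\cc$ depending only on $m$ and $\cshape$.
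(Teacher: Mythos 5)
Your proposal is correct and follows essentially the same route as the paper: finite-dimensional norm equivalence on the reference cell/edge (using positivity of the bubbles, and for \eqref{eq3:result:lem:LocalizingEstimates} the fact that $\hat{E}\hat{q}\equiv0$ iff $\hat{q}\equiv0$), combined with affine scaling, where the anisotropic Jacobian ratio $|\tau|^{1/2}h_\sigma^{-1/2}\approx h_\sigma^{1/2}$ produces the stated power of $h_\sigma$. The paper merely cites the standard literature for the first two estimates and writes out only the extension bound, but the underlying argument is the one you give.
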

%
\begin{proof}
Relations \eqref{eq1:result:lem:LocalizingEstimates} and \eqref{eq2:result:lem:LocalizingEstimates} are proven in the same fashion as in \cite{}.
We focus on \eqref{eq3:result:lem:LocalizingEstimates}.
We prove that $q\mapsto\norm{\psi_\sigma^{1/2} E_\sigma q}{L^2(\tau)}$ is a norm on $\bb{P}_m(\sigma)$.
\begin{equation}
\begin{split}
\norm{E_\sigma q}{L^2(\tau)}
&=|\tau|^{1/2}\norm{\hat{E}(q\circ F_\tau)}{L^2(\hat{\tau})}.
\end{split}
\end{equation}
It clear that $\hat{q}\in\bb{P}_m(\hat{\sigma})$ is identically zero if and only if its extension $\hat{E}\hat{q}$ is identically zero on $\hat{\tau}$.
So $\hat{q}\mapsto\norm{\hat{E}\hat{q}}{L^2(\hat{\tau})}$ is an equivalent norm on $\bb{P}_m(\hat{\sigma})$ we have
\begin{equation}
\norm{\hat{E}(q\circ F_\tau)}{L^2(\hat{\tau})}\leq c\norm{q\circ F_\tau}{L^2{\hat{\sigma}}}=ch_\sigma^{-1/2}\norm{q}{L^2(\sigma)}
\end{equation}
so with $c|\tau|^{1/2}h_\sigma^{1/2}\leq\overline{c}h_\sigma^{1/2}$
\begin{equation}
\norm{\psi_\sigma^{1/2}E_\sigma q}{L^2(\tau)}\leq\overline{c}h_\sigma^{1/2}\norm{q}{L^2(\sigma)}.
\end{equation}
\end{proof}
%
\begin{lemma}[Estimator efficiency]\label{lem:EstimatorEfficiency}
Let $\mesh$ be a partition of $\Omega$ satisfying conditions \eqref{eq:sr}.
The module $\ms{ESTIMATE}$ produces a posteriori error estimate of the discrete solution error such that
\begin{equation}\label{eq:result:lem:EstimatorEfficiency}
\Ceff\,\eta_\mesh^2(\U,\Omega)\leq\enorm{\u-\U}{\mesh}^2+\osc_\mesh^2(\Omega).
\end{equation}
with constant $\Ceff$ depending only on $\cshape$.
\end{lemma}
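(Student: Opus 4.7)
The plan is to follow the Verf\"urth-style bubble-function argument adapted to the fourth-order setting, building the test functions out of the interior and edge bubbles $\psi_\face,\psi_\edge,\chi_\edge$ constructed in \eqref{eq:interiorbubble}--\eqref{eq:edgebubbles} and exploiting the localizing norm equivalences of Lemma~\ref{lem:LocalizingEstimates}. Every test function used below will be supported in a single cell or in the pair $D_\edge$ and will lie in $H_0^2(\Omega)$, so that every boundary integral of $\ap$ collapses, the inconsistency $\Ep$ contributes nothing, and $\a(\u-\U,v)$ reduces via piecewise integration by parts to an interior residual $\int(f-\Op\U)\,v$ plus the two edge-jump contributions involving $\jump{\Lap\U}{\edge}$ and $\jump{\partial\Lap\U/\partial\n_\edge}{\edge}$.

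For the interior residual I would split $f-\Op\U=(\Pip f-\Op\U)+(f-\Pip f)$ on each $\face\in\mesh$, note that $R_\face:=\Pip f-\Op\U$ is a polynomial, and test with $v_\face:=\psi_\face R_\face\in H_0^2(\face)\subset H_0^2(\Omega)$. Lemma~\ref{lem:LocalizingEstimates}\,\eqref{eq1:result:lem:LocalizingEstimates} gives $\norm{R_\face}{L^2(\face)}^2\lapprox\int_\face R_\face v_\face=\a(\u-\U,v_\face)-\int_\face(f-\Pip f)\,v_\face$, and combining with continuity of $\a$ and the inverse bound $\norm{\Lap v_\face}{L^2(\face)}\lapprox h_\face^{-2}\norm{R_\face}{L^2(\face)}$ from Lemma~\ref{lem:ie} yields $h_\face^{2}\norm{f-\Op\U}{L^2(\face)}\lapprox\norm{\Lap(\u-\U)}{L^2(\face)}+h_\face^{2}\norm{f-\Pip f}{L^2(\face)}$ after a triangle inequality, with the last summand identified as the oscillation $\osc_\mesh(\face):=h_\face^{2}\norm{f-\Pip f}{L^2(\face)}$. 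For each interior edge $\edge$ with adjacent cells $\face_1,\face_2$, denote $J_1:=\jump{\partial\Lap\U/\partial\n_\edge}{\edge}$ and $J_2:=\jump{\Lap\U}{\edge}$; the bubbles $\chi_\edge,\psi_\edge$ are designed so that $w_2:=\chi_\edge E_\edge J_2$ has $w_2|_\edge=0$ while $\partial w_2/\partial\n_\edge|_\edge$ is bounded below by $d_1 h_\edge^{-1}\psi_\edge J_2$, and so that $w_1:=\psi_\edge E_\edge J_1$ has $\partial w_1/\partial\n_\edge|_\edge=0$ while $w_1|_\edge=\psi_\edge J_1$. Integrating $\a(\U,w_k)$ by parts over $\face_1\cup\face_2$ and subtracting the weak equation then isolates $J_k$ in the edge integral while killing $J_{3-k}$. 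Using Lemma~\ref{lem:LocalizingEstimates}\,\eqref{eq3:result:lem:LocalizingEstimates} and Lemma~\ref{lem:ie} to deduce $\norm{w_k}{L^2(\face_i)}\lapprox h_\edge^{1/2}\norm{J_k}{L^2(\edge)}$ and $\norm{\Lap w_k}{L^2(\face_i)}\lapprox h_\edge^{-3/2}\norm{J_k}{L^2(\edge)}$, and balancing through continuity of $\a$, gives $h_\edge^{3}\norm{J_1}{L^2(\edge)}^2+h_\edge\norm{J_2}{L^2(\edge)}^2\lapprox\norm{\Lap(\u-\U)}{L^2(\omega_\edge)}^2+h_\edge^{4}\norm{f-\Op\U}{L^2(\omega_\edge)}^2$, the last summand being reabsorbed via the interior bound above.

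Summing over $\face\in\mesh$ and $\edge\in\edges_\mesh$ and applying the finite-intersection property~\eqref{eq:sr} to control the overlap count by $\cshape$ then delivers~\eqref{eq:result:lem:EstimatorEfficiency}. The main obstacle I anticipate is careful bookkeeping rather than any new analytical idea. One must verify that the bubbles $\psi_\edge,\chi_\edge$ from~\eqref{eq:edgebubbles} genuinely belong to $H_0^2(D_\edge)$---which is precisely what the $C^1$-conforming construction of $g,G,H$ together with the even/odd extensions across $\edge$ is designed to ensure---and one must handle the boundary-adjacent edges $\edge\in\bedges_\mesh$ appearing in the outer sum of~\eqref{eq:indicator} either by convention (trivial jump) or by absorption into the boundary pieces of $\enorm{\cdot}{\mesh}$, so that no dedicated boundary-efficiency analysis is required.
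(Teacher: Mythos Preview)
Your proposal is correct and follows essentially the same Verf\"urth-type bubble-function argument as the paper: the paper likewise tests the localized residual with $\psi_\face\overline{R}$, $\psi_\edge E_\edge J_1$, and $\chi_\edge E_\edge J_2$, invokes Lemma~\ref{lem:LocalizingEstimates} and the inverse estimates of Lemma~\ref{lem:ie}, and identifies the oscillation as $h_\face^2\norm{f-\overline{f}}{L^2(\face)}$. The only cosmetic difference is that the paper routes the cell and edge bounds through the intermediate quantity $\norm{\scr{R}_\mesh}{H^{-2}(\omega)}$ before invoking continuity at the very end, whereas you bound by $\norm{\Lap(\u-\U)}{L^2(\omega)}$ directly; the underlying mechanism and all constants are the same.
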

\begin{proof}
The proof is carried by localizing the error contributions coming from the cells residuals $R$ and edge jumps $J_1$ and $J_2$.
For $\tau\in\mesh$ let $\psi_\tau\in H^2_0(\tau)$ be as in \eqref{eq:bubble} and let $\overline{R}$ be a polynomial approximation of $R|_\tau$ by means of an $L^2$-orthogonal projection.
Using the norm-equivalence relation \eqref{eq1:result:lem:LocalizingEstimates} of Lemma
\ref{lem:LocalizingEstimates}
\begin{equation}\label{eq1:lem:EstimatorEfficiency}
\norm{\overline{R}}{L^2(\tau)}^2\leq \cc\int_\tau\overline{R}(\overline{R}\psi_\tau)\leq \cc\norm{\overline{R}}{H^{-2}(\tau)}\norm{\overline{R}\psi_\tau}{H^2(\tau)}.
\end{equation}
From \eqref{eq:inve:lem:ie} and \eqref{eq:interiorbubble}, $\norm{\overline{R}\psi_\tau}{H^2(\tau)}\leq \cc h_\tau^{-2}\norm{\overline{R}}{L^2(\tau)}$ with a constant $\cc>0$ which depends on the polynomial degree $r$ and \eqref{eq1:lem:EstimatorEfficiency} now reads $h_\tau^{2}\norm{\overline{R}}{L^2(\tau)}\leq \cc\norm{\overline{R}}{H^{-2}(\tau)}$.
Then
\begin{equation}\label{eq:interior:lem:EstimatorEfficiency}
\begin{split}
h_\tau^2\norm{R}{L^2(\tau)}&\leq h_\tau^2\norm{\overline{R}}{L^2(\tau)}+h_\tau^2\norm{R-\overline{R}}{L^2(\tau)},\\
&\leq \cc\norm{\overline{R}}{H^{-2}(\tau)}+h_\tau^2\norm{R-\overline{R}}{L^2(\tau)},\\
&\leq \cc\norm{R}{H^{-2}(\tau)}+\cc\norm{R-\overline{R}}{H^{-2}(\tau)}+h_\tau^2\norm{R-\overline{R}}{L^2(\tau)},\\
&\lapprox \cc\norm{\scr{R}}{H^{-2}(\tau)}+h_\tau^2\norm{R-\overline{R}}{L^2(\tau)}.
\end{split}
\end{equation}
Recognizing that $R-\overline{R}=f-\overline{f}$, define $\osc_\mesh(\tau):=h^2_\tau\norm{f-\overline{f}}{L^2(\tau)}$.
We turn our attention to the jump terms across the interior edges.
We begin with the edge residual $J_1$.
Let an edge $\sigma\in\edges_\mesh$ and cells $\tau_1,\tau_2\in\mesh$ be such that
$\sigma\subset\diff\tau_1\cap\diff\tau_2$ and denote $D_\sigma=\overline{\tau_1\cup\tau_2}$.
If $\v\in H^2_0(D_\sigma)$ then
\begin{equation}\label{eq2:lem:EstimatorEfficiency}
\inner{\scr{R},\v}=\int_{D_\sigma}R\v+\int_\sigma J_1\v-\int_\sigma J_2\textstyle\dfrac{\v}{\n_\sigma}.
\end{equation}
Let $\psi_\sigma$ be the bubble function \eqref{eq:edgebubbles} and constantly extend the values of $J_1$ in directions $\pm\n_\sigma$; i.e, into each of $\tau_i$, and set $v_\sigma=\psi_\sigma J_1$.
Then \eqref{eq2:lem:EstimatorEfficiency} reads
\begin{equation}\label{eq3:lem:EstimatorEfficiency}
\cc\norm{J_1}{L^2(\sigma)}^2\leq\int_\sigma\psi_\sigma J_1^2=\inner{\scr{R},\v_\sigma}-\int_{D_\sigma}R\v_\sigma.
\end{equation}
From \eqref{eq:dtrace:lem:ie} and \eqref{eq:edgebubbles} we have the estimates $\norm{\psi_\sigma E_\sigma J_1}{H^2(D_\sigma)}\leq \cinv h_\sigma^{-2}\norm{E_\sigma J_1}{L^2(D_\sigma)}$ and $\norm{\psi_\sigma E_\sigma J_1}{L^2(D_\sigma)}\leq \cc h_\sigma^{1/2}\norm{J_1}{L^2(\sigma)}$
which we apply to \eqref{eq3:lem:EstimatorEfficiency} to obtain
\begin{equation}\label{eq:jump1:lem:EstimatorEfficiency}
\begin{split}
\cc\norm{J_1}{L^2(\sigma)}^2&\leq\left(\cinv h_\sigma^{-2}\norm{\scr{R}}{H^{-2}(D_\sigma)}+\norm{R}{L^2(D_\sigma)}\right)\norm{J_1}{L^2(D_\sigma)},\\
&\leq\cdtrace h_\sigma^{1/2}\left(\cinv h_\sigma^{-2}\norm{\scr{R}}{H^{-2}(D_\sigma)}+\norm{R}{L^2(D_\sigma)}\right)\norm{J_1}{L^2(\sigma)},
\end{split}
\end{equation}
where the the last line follows from \eqref{eq3:result:lem:LocalizingEstimates}.
Now let $\chi_\sigma$ be the function \eqref{eq2:result:lem:LocalizingEstimates}, extend the values of $J_2$ into $D_\sigma$ and set $\w_\sigma=\chi_\sigma J_2$.
We then have
\begin{equation}
\inner{\scr{R},\w_\sigma}=\int_{D_\sigma}R\w_\sigma-h_\sigma^{-1}\int_\sigma\psi_\sigma J_2^2.
\end{equation}
Similarly, we obtain
\begin{equation}\label{eq:jump2:lem:EstimatorEfficiency}
\cc h_\sigma^{-1}\norm{J_2}{L^2(\sigma)}^2\leq\cdtrace h_\sigma^{1/2}\left(\cinv h_\sigma^{-2}\norm{\scr{R}}{H^{-2}(D_\sigma)}+\norm{R}{L^2(D_\sigma)}\right)\norm{J_2}{L^2(\sigma)}.
\end{equation}
We have from \eqref{eq:jump1:lem:EstimatorEfficiency} and
\eqref{eq:jump2:lem:EstimatorEfficiency}
\begin{equation}
\begin{split}
h_\sigma^3\norm{J_1}{L^2(\sigma)}^2+h_\sigma\norm{J_2}{L^2(\sigma)}^2
&\lapprox{\textstyle\frac{\cinv\cdtrace}{\cc}}\norm{\scr{R}}{H^{-2}(D_\sigma)}^2
+{\textstyle\frac{\cdtrace}{\cc}} h_\sigma^4\norm{R}{L^2(D_\sigma)}^2.
\end{split}
\end{equation}
Summing up we have
\begin{equation}
\begin{split}
\eta_\mesh^2(\V,\tau)&=h_\tau^4\norm{R}{L^2(\tau)}^2
+\sum_{\sigma\in\diff\tau}\left(h_\sigma^3\norm{J_1}{L^2(\sigma)}^2+h_\sigma\norm{J_2}{L^2(\sigma)}^2\right),\\
&\leq h_\tau^4\norm{R}{L^2(\tau)}^2+{\textstyle\frac{\cdtrace}{\cc}}\sum_{\sigma\in\edges_\mesh}
\left(\cinv\norm{\scr{R}}{H^{-2}(D_\sigma)}^2+h_\sigma^4\norm{R}{L^2(D_\sigma)}^2\right),\\
&\leq (1+\cshape)h_\tau^4\norm{R}{L^2(\omega_\tau)}^2+{\textstyle\frac{\cinv\cdtrace}{\cc}}\sum_{\sigma\in\diff\tau}\norm{\scr{R}}{H^{-2}(D_\sigma)}^2,\\
\end{split}
\end{equation}
we arrive at
\begin{equation}
\begin{split}
\eta_\mesh^2(\V,\tau)&\lapprox(1+\cshape)\left(\cc\norm{\scr{R}}{H^{-2}(\tau)}^2+\osc^2_\mesh(\omega_\tau)\right)
+{\textstyle\frac{\cinv\cdtrace}{\cc}}\sum_{\sigma\in\diff\tau}\norm{\scr{R}}{H^{-2}(D_\sigma)}^2.
\end{split}
\end{equation}
Note that
\begin{equation}
\begin{split}
\sum_{\sigma\in\edges_\mesh}\norm{\scr{R}}{H^{-2}(D_\sigma)}^2
&\leq\Ccont\sum_{\sigma\in\edges_\mesh}\norm{\u-\U}{H^2(D_\sigma)}^2
\leq\cshape\Ccont\norm{\u-\U}{H^2(\Omega)}^2
\end{split}
\end{equation}
\end{proof}

\subsection{Discrete upper bound}

\begin{lemma}[Estimator discrete reliability]\label{lem:dre}
Let $\mesh$ be a partition of $\Omega$ satisfying conditions \eqref{eq:sr} and let $\mesh_\ast=\ms{REFINE}\,[\mesh,R]$ for some refined set $R\subseteq\mesh$.
If $\U$ and $\U_\ast$ are the respective solutions to \eqref{eq:cdp} on $\mesh$ and $\mesh_\ast$, then for a constants $ C_\rm{dRel,1}, C_\rm{dRel,2}>0$, depending only on $\cshape$,
\begin{equation}\label{eq:result:lem:dre}
\begin{split}
\enorm{\U_\ast-\U}{\mesh_\ast}^2&\leq C_\rm{dRel,1}\eta^2_\mesh(\U,\omega_{R_{\mesh\to\mesh_\ast}})+\norm{\Epp}{H^{-2}(\Omega)}\\
&\quad+C_\rm{dRel,2}\left(\gamma_1\norm{\U}{3/2,R}+\gamma_2\left\norm{\textstyle\dfrac{\U}{\nu}\right}{1/2,R}\right),
\end{split}
\end{equation}
where $\omega_{R_{\mesh\to\mesh_\ast}}$ is understood as the union of support extensions of refined cells from $\mesh$ to obtain $\mesh_\ast$.
\end{lemma}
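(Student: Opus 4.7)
The plan is to mimic the reliability proof of Lemma~\ref{lem:er}, but with the discrete error $e:=\U_\ast-\U\in\Xpp$ playing the role of the test function, and to exploit the coarse Galerkin identity to localize the residual to a neighborhood of the refined set $R$. First I invoke coercivity (Lemma~\ref{lem:csvnbf}) together with the fine-mesh discrete equation $\app(\U_\ast,e)=\ellf(e)$ to obtain
\begin{equation}
\Ccoer\enorm{e}{\mesh_\ast}^2\leq\app(e,e)=\ellf(e)-\app(\U,e).
\end{equation}
I then introduce the quasi-interpolant $\Ip e\in\Xp$ and use the coarse discrete equation $\ap(\U,\Ip e)=\ellf(\Ip e)$, together with the asymptotic consistency relation $\app(\u,v)=\ellf(v)+\inner{\Epp,v}$ from Lemma~\ref{lem:ac}, to rearrange the right-hand side into a residual form
\begin{equation}
\app(e,e)=\app(\u-\U,e-\Ip e)-\inner{\Epp,e-\Ip e}-\bigl[\app(\U,\Ip e)-\ap(\U,\Ip e)\bigr].
\end{equation}

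The first summand is the $\app$-residual tested against $e-\Ip e$, which I would treat by cell-wise integration by parts over the coarse partition $\mesh$, exactly as in Lemma~\ref{lem:er}. Since $\U\in\Xp$ is smooth across every edge of $\mesh_\ast$ that is not an edge of $\mesh$, only the usual interior cell residual $f-\Op\U$ and the edge jumps $\jump{\Lap\U}{\sigma}$ and $\jump{\dfrac{\Lap\U}{\n_\sigma}}{\sigma}$ on $\edges_\mesh$ survive, paired against $e-\Ip e$ and its normal derivative. The decisive localization comes from the local-projection property of $\Ip$ built out of the $L^2$-dual basis of Section~\ref{sec:part}: because $e\in\Xpp\supseteq\Xp$ and $\Ip$ reproduces elements of $\Xp$ cell-by-cell, on any coarse cell $\tau\in\mesh$ whose support extension $\omega_\tau$ is disjoint from the refined cells one has $e|_{\omega_\tau}\in\Xp|_{\omega_\tau}$ and hence $(e-\Ip e)|_\tau=0$. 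Only cells in $\omega_{R_{\mesh\to\mesh_\ast}}$ therefore contribute, and Cauchy--Schwarz with the approximation estimates for $\Ip$ yields
\begin{equation}
|\app(\u-\U,e-\Ip e)|\lapprox\eta_\mesh(\U,\omega_{R_{\mesh\to\mesh_\ast}})\,\enorm{e}{\mesh_\ast}.
\end{equation}

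For the discrepancy $\app(\U,\Ip e)-\ap(\U,\Ip e)$, I would observe that $\Lap\U$ and $\Lap(\Ip e)$ are both piecewise polynomials of degree $\leq r-2$ on $\mesh$, so $\Pip$ and $\Pipp$ act on them as the identity and the projection-dependent boundary integrals cancel. The only surviving contribution is the difference of stabilization weights $\gamma_1\bigl(h_{\mesh_\ast,\Gamma}^{-3}-h_{\mesh,\Gamma}^{-3}\bigr)$ and $\gamma_2\bigl(h_{\mesh_\ast,\Gamma}^{-1}-h_{\mesh,\Gamma}^{-1}\bigr)$, both supported exactly on those boundary edges of $\mesh$ that were refined; a Cauchy--Schwarz on those edges together with the stability $\enorm{\Ip e}{\mesh_\ast}\lapprox\enorm{e}{\mesh_\ast}$ produces the claimed stabilization contribution $\gamma_1\norm{\U}{3/2,R}+\gamma_2\norm{\dfrac{\U}{\nu}}{1/2,R}$. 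The term $-\inner{\Epp,e-\Ip e}$ is bounded directly by $\norm{\Epp}{H^{-2}(\Omega)}\,\enorm{e}{\mesh_\ast}$ via Lemma~\ref{lem:normequiv}. Dividing through by $\enorm{e}{\mesh_\ast}$ delivers \eqref{eq:result:lem:dre}.

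The main obstacle is the localization step: making rigorous the claim that $(e-\Ip e)|_\tau=0$ whenever $\omega_\tau$ is untouched by refinement. This reduces to verifying that $\Ip$ is a genuine local projection, namely $\Ip V|_\tau=V|_\tau$ whenever $V|_{\omega_\tau}\in\Xp|_{\omega_\tau}$, which in turn depends on the local linear independence of the HB-basis and the locally supported dual basis from Section~\ref{sec:part}. A secondary technicality is the bookkeeping needed to identify exactly which boundary edges enter $\norm{\cdot}{3/2,R}$ and $\norm{\cdot}{1/2,R}$, and to confirm that $h_{\mesh_\ast,\Gamma}\leq h_{\mesh,\Gamma}$ so that each weight difference is dominated by the coarser $h_{\mesh,\Gamma}^{-s}$ on refined edges, producing the stabilization norms of $\U$ restricted to $R$ rather than to all of $\mesh_\ast$.
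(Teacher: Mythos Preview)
Your decomposition and the treatment of each piece are correct and close to the paper's argument; the three-term identity you derive checks out, and your handling of the form discrepancy $\app(\U,\Ip e)-\ap(\U,\Ip e)$ (projection terms cancel because $\Lap\U,\Lap(\Ip e)\in\cal{P}^{r-2}_\mesh$, so only the stabilization weights on refined boundary edges survive) is exactly what the paper uses.

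The one substantive difference is the \emph{localization device}. You take $V=\Ip e$ globally and then invoke a local-projection property of $\Ip$ to conclude $(e-\Ip e)|_\tau=0$ on coarse cells whose support extension avoids the refinement. The paper instead builds $V$ explicitly as a piecewise object,
\[
V \;=\; e_\ast\,\ds{1}_{\Omega\setminus\Omega_\ast}\;+\;\sum_{i\in J}(\Ip e_\ast)\,\ds{1}_{\Omega_i},
\]
where the $\Omega_i$ are the connected components of the refined region $\Omega_\ast=\bigcup_{\tau\in R_{\mesh\to\mesh_\ast}}\overline{\tau}$. This forces $e_\ast-V\equiv0$ on $\Omega\setminus\Omega_\ast$ by construction, after which the integration by parts is carried out component-by-component over the $\Omega_i$. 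The paper's route thus sidesteps the local-projection lemma you correctly flag as the main obstacle; conversely, it must tacitly ensure that the piecewise $V$ actually lies in $\Xp$ (i.e.\ glues with the required smoothness across $\partial\Omega_i$), which also relies on some local reproduction of $\Ip$ near the interface. Both routes lead to the same localized bound $\eta_\mesh(\U,\omega_{R_{\mesh\to\mesh_\ast}})\enorm{e_\ast}{\mesh_\ast}$, and the subsequent estimate of $\ap(\U,V)-\app(\U,V)$ via $h_\sigma\leq\cshape h_{\sigma_\ast}$ is handled identically.
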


\begin{proof}
Let $\e_\ast=\U_\ast-\U$. First note that if $\V\in\Xp$ then in view of the nesting $\Xp\subset\Xpp$, $\app(\U_\ast,\V)=\ellf(\V)=\ap(\U,\V)$ and
\begin{equation}\label{eq1:lem:dre}
\app(\U_\ast-\U,\V)=[\ap(\U,\V)-\app(\U,\V)]-\inner{\Epp,\V}.
\end{equation}
Then
\begin{equation}
\app(\U_\ast-\U,\e_\ast)=\app(\U_\ast-\U,\e_\ast-\V)+[\ap(\U,\V)-\app(\U,\V)]-\inner{\Epp,\V}.
\end{equation}
We treat each of the three terms separately.
For the first term, we form disconnected subdomains $\Omega_i\subseteq\Omega$, $i\in J$, each formed from the interiors of connected components of $\Omega_\ast=\cup_{\tau\in R_{\mesh\to\mesh_\ast}}\overline{\tau}$.
Then to each subdomain $\Omega_i$ we form a partition $\mesh_i=\{\face\in\mesh:\face\subset\Omega_i\}$, interior edges $\edges_i=\{\sigma\in\edges_\mesh:\sigma\subset\diff\tau,\ \tau\in\mesh_i\}$ and boundary edges $\bedges_i=\{\sigma\in\bedges_\mesh:\sigma\subset\diff\tau,\ \tau\in\mesh_i\}$, and a corresponding finite-element space $\X_i$.
Let $\V\in\Xp$ be an approximation of $\e_\ast$ be given by
\begin{equation}
\V=\e_\ast\ds{1}_{\Omega\backslash\Omega_\ast}+\sum_{i\in J}(\Ip\e_\ast)\cdot\ds{1}_{\Omega_i}.
\end{equation}
Then $e_\ast-\V\equiv0$ on $\Omega\backslash\Omega_\ast$ and performing  integration by parts will yield
\begin{equation}
\begin{split}
\app(\U_\ast-\U,\e_\ast-\V)=&\sum_{i\in J}
\bigg[\sum_{\tau\in\mesh_i}\inner{R,\e_\ast-\Ip\e_\ast}_\tau
+\sum_{\sigma\in\edges_i}\left\{\inner{J_1,\e_\ast-\Ip\e_\ast}_\sigma+\inner{J_2,\e_\ast-\Ip\e_\ast}_\sigma\right\}\\
&+\sum_{\sigma\in\bedges_i}
\left(\int_\sigma\U{\textstyle\dfrac{}{\n_\sigma}}\left[\Pip\Lap(\e_\ast-\Ip\e_\ast)\right]
-\int_\sigma{\textstyle\dfrac{\U}{\n_\sigma}}\Pip\Lap(\e_\ast-\Ip\e_\ast)\right.\\
&\left.-\,\gamma_1 h_\sigma^{-3}\int_\sigma\U(\e_\ast-\Ip\e_\ast)-\gamma_2 h_\sigma^{-1}\int_\sigma{\textstyle\dfrac{\U}{\n_\sigma}}{\textstyle\dfrac{}{\n_\sigma}}(\e_\ast-\Ip\e_\ast)\right)\bigg],
\end{split}
\end{equation}
Following the same procedure carried in Lemma \ref{lem:er} we have
\begin{equation}
\begin{split}
\sum_{\tau\in\mesh_i}\inner{R,\e_\ast-\Ip\e_\ast}_\tau
&+\sum_{\sigma\in\edges_i}\left\{\inner{J_1,\e_\ast-\Ip\e_\ast}_\sigma
+\inner{J_2,\e_\ast-\Ip\e_\ast}_\sigma\right\}\\
&\leq\cproj\bigg(\sum_{\tau\in\mesh_i}\eta^2_\mesh(\U,\tau)\bigg)^{1/2}
\bigg(\sum_{\tau\in\mesh_i}\norm{\e_\ast}{H^2(\omega_\tau)}^2\bigg)^{1/2},
\end{split}
\end{equation}
and
\begin{equation}
\begin{split}
\sum_{\sigma\in\bedges_i}&
\left(\int_\sigma\U{\textstyle\dfrac{}{\n_\sigma}}\left[\Pip\Lap(\e_\ast-\Ip\e_\ast)\right]
-\int_\sigma{\textstyle\dfrac{\U}{\n_\sigma}}\Pip\Lap(\e_\ast-\Ip\e_\ast)\right)\\
&\leq\cproj\left\{\cinv\cdtrace\norm{\U}{3/2,\mesh}+\cdtrace\left\norm{\textstyle\dfrac{\U}{\nu}\right}{1/2,\mesh}\right\}\bigg(\sum_{\sigma\in\bedges_i}\norm{\e_\ast}{H^2(\omega_\sigma)}^2\bigg)^{1/2},
\end{split}
\end{equation}
from which we obtain
\begin{equation}\label{eq2:lem:dre}
\app(\U_\ast-\U,\e_\ast-\V)\lapprox\sum_{i\in J}\left(\eta_{\mesh}(\U,\Omega_i)
+\gamma_1\norm{\U}{3/2,\mesh_i}+\gamma_2\left\norm{\textstyle\dfrac{\U}{\nu}\right}{1/2,\mesh_i}\right)\norm{\e_\ast}{H^2(\omega_i)}.
\end{equation}
where $\omega_i=\cup\{\omega_\tau:\tau\in\Omega_i\}$, and the implicit constant depends on $\cproj$, $\cshape$ and discrete inverse constants $\cinv$ and $\cdtrace$.
The second term reads
\begin{equation}
\begin{split}
\ap(\U,\V)-\app(\U,\V)&=\sum_{\sigma\in\bedges_\mesh}\left(\gamma_1h_\sigma^{-3}\int_\sigma\U\V+\gamma_2h_\sigma^{-1}\int_\sigma\textstyle\dfrac{\U}{\n_\sigma}\dfrac{\V}{\n_\sigma}\right)\\
&\quad-\sum_{\sigma_\ast\in\bedges_{\mesh_\ast}}\left(\gamma_1h_{\sigma_\ast}^{-3}\int_{\sigma_\ast}\U\V+\gamma_2h_{\sigma_\ast}^{-1}\int_{\sigma_\ast}\textstyle\dfrac{\U}{\n_{\sigma_\ast}}\dfrac{\V}{\n_{\sigma_\ast}}\right).
\end{split}
\end{equation}
Using the grading $h_\sigma\leq\cshape h_{\sigma_\ast}$ whenever $\sigma_{\ast,i}\subset\sigma$, $\sigma_{\ast,i}\in\bedges_{\mesh_\ast}$ and $\sigma\in\bedges_\mesh$
\begin{equation}
h^{-3}_\sigma\int_\sigma\U\V-h_{\sigma_{\ast,1}}^{-3}\int_{\sigma_{\ast,1}}\U\V-h_{\sigma_{\ast,2}}^{-3}\int_{\sigma_{\ast,2}}\U\V
=(h^{-3}_\sigma-h_{\sigma_{\ast,1}}^{-3}-h_{\sigma_{\ast,2}}^{-3})\int_\sigma\U\V
\lapprox \cshape h^{-3}_\sigma\int_\sigma\U\V
\end{equation}
so we arrive at
\begin{equation}
|\ap(\U,\V)-\app(\U,\V)|\lapprox \cshape\left(\gamma_1\norm{\U}{3/2,R}+\gamma_2\left\norm{\textstyle\dfrac{\U}{\nu}\right}{1/2,R}\right)\norm{\e_\ast}{H^2(\Omega)}.
\end{equation}
%
%
Set $\omega_{R_{\mesh\to\mesh_\ast}}=\cup\{\omega_\tau:\tau\in R_{\mesh\to\mesh_\ast}\}$.
We therefore have
\begin{equation}
\begin{split}
\enorm{\U_\ast-\U}{\mesh_\ast}^2&\leq\Cdrel\left(\eta_\mesh(\U,\omega_{R_{\mesh\to\mesh_\ast}})+\norm{\Epp}{H^{-2}(\Omega)}\right)\norm{\e_\ast}{H^2(\Omega)}\\
&\quad+\left(\gamma_1\norm{\U}{3/2,R}+\gamma_2\left\norm{\textstyle\dfrac{\U}{\nu}\right}{1/2,R}\right)\norm{\e_\ast}{H^2(\Omega)}
\end{split}
\end{equation}
\end{proof}

\begin{remark}
In the case $\Xp\subset\Ho(\Omega)$, the relation in \eqref{eq1:lem:dre} would reduce to zero on the right-hand side owing to Galerkin orthogonality, saving us from carrying the treatment subsequent to\eqref{eq2:lem:dre}.
\end{remark}

With the following realization from \cite{al2018adaptivity} we have shown that contributions from domain boundary integrals are dominated by the those coming from the mesh interior.
\begin{lemma}\label{lem:boundarycontrol}
For sufficiently large stabilization terms $\gamma_1$ and $\gamma_2$,
\begin{equation}
(\gamma_1-C_\rm{R})\norm{\U}{3/2,\mesh}^2+(\gamma_2-C_\rm{R})\left\norm{\textstyle\dfrac{\U}{\nu}\right}{1/2,\mesh}^2
\leq\Ccoer^{-1}\eta_\mesh^2(\U,\Omega)
\end{equation}
with $C_\rm{R}\lapprox\frac{\cshape}{\Ccoer}$.
\end{lemma}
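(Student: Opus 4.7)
The plan is to chain coercivity and reliability, exploiting that the exact solution $\u\in\Ho(\Omega)$ carries vanishing traces on $\Gamma$. Because $\u|_\Gamma=\diff\u/\diff\nu|_\Gamma=0$, the mesh-weighted boundary semi-norms of $\u-\U$ coincide exactly with those of $\U$, so applying Lemma~\ref{lem:csvnbf} to $\u-\U\in H^2(\Omega)$ gives
\[
\Ccoer\bigl(\gamma_1\norm{\U}{3/2,\mesh}^2+\gamma_2\left\norm{\textstyle\dfrac{\U}{\nu}\right}{1/2,\mesh}^2\bigr)\leq\Ccoer\enorm{\u-\U}{\mesh}^2\leq\ap(\u-\U,\u-\U).
\]
This is the single step that converts boundary information about the discrete solution into information about the (computable) error.

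Next I would invoke the reliability estimate \eqref{eq:res:lem:er}, which controls $\ap(\u-\U,\u-\U)$ by a multiple of $\eta_\mesh^2(\U,\Omega)$, the inconsistency $\norm{\Ep}{H^{-2}(\Omega)}$, and a copy of the same boundary semi-norms of $\U$ with constant $C_{\mathrm{rel},2}$ depending only on $\cshape$. Substituting this into the previous display and transferring the $C_{\mathrm{rel},2}$ copy to the left yields
\[
(\Ccoer-C_{\mathrm{rel},2})\bigl(\gamma_1\norm{\U}{3/2,\mesh}^2+\gamma_2\left\norm{\textstyle\dfrac{\U}{\nu}\right}{1/2,\mesh}^2\bigr)\leq C_{\mathrm{rel},1}\eta_\mesh^2(\U,\Omega)+\norm{\Ep}{H^{-2}(\Omega)}.
\]
Dividing by $\Ccoer$ and using the asymptotic control $\norm{\Ep}{H^{-2}(\Omega)}\to 0$ from Lemma~\ref{lem:AsymptoticConsistency} to absorb the inconsistency on the right, the stated form emerges once the multiplicative $(1-C_{\mathrm{rel},2}/\Ccoer)\gamma_i$ is reorganized into an additive $(\gamma_i-C_\rm{R})$ with $C_\rm{R}\lapprox\cshape/\Ccoer$.

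The main obstacle is precisely that last algebraic recasting: the reliability bound as written in Lemma~\ref{lem:er} produces boundary terms whose coefficient grows with $\gamma_i$, while the target inequality needs a $\gamma_i$-independent subtraction. To handle this cleanly I would revisit the boundary lines of the reliability proof and split each contribution $c_\mathrm{proj}(\cinv\cdtrace+\gamma_i)$ as $c_\mathrm{proj}\gamma_i+c_\mathrm{proj}\cinv\cdtrace$ \emph{before} squaring, so that the $\gamma_i$-proportional piece matches the coercivity left-hand side and is absorbed, whereas the $\gamma_i$-independent piece (coming from the projection trace estimates of Lemma~\ref{lem:L2estimates}, hence bounded purely in terms of $\cshape$) becomes precisely $C_\rm{R}$. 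The stabilization thresholds already imposed by Lemma~\ref{lem:csvnbf} can then be increased, if necessary, to ensure $\gamma_i-C_\rm{R}>0$ with the ratio of order one, yielding the control of the boundary contributions by the interior estimator.
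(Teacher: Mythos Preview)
The paper does not supply its own proof of this lemma; it is quoted as a ``realization from \cite{al2018adaptivity}'' and stated without argument. Your strategy---apply coercivity to $u-\U$ (so the boundary semi-norms collapse to those of $\U$), feed the result through the reliability estimate, and absorb the duplicated boundary terms---is the standard route for this kind of bound in Nitsche/dG analyses and is the approach one would expect the cited reference to take.

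Two points deserve attention. First, the inconsistency term: in \cite{al2018adaptivity} the bilinear form is the unprojected one \eqref{eq:oldnitbilin}, which is consistent, so no $\norm{\Ep}{H^{-2}(\Omega)}$ appears there. In the present paper's inconsistent setting your chain unavoidably produces this term, and your proposal to ``absorb'' it via Lemma~\ref{lem:AsymptoticConsistency} is not a proof---that lemma only gives decay in $h_\mesh$, not domination by $\eta_\mesh^2$. The honest conclusion is that the lemma, transplanted verbatim, should carry an additional $\norm{\Ep}{H^{-2}(\Omega)}$ on the right; note that Corollary~\ref{cor:aposteriori}, which is the only downstream consumer, already tolerates exactly such a term, so nothing is lost.

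Second, the algebraic recasting you flag is real but your proposed fix is the right one: in the reliability proof the boundary contributions arise as $\cproj(\cinv\cdtrace+\gamma_i)$ \emph{linearly} before any squaring, so keeping the $\cproj\gamma_i$ piece separate lets it match the coercivity contribution $\Ccoer\gamma_i$ up to a factor, while the remaining $\cproj\cinv\cdtrace$ piece---depending only on $\cshape$---becomes $C_\rm{R}$ after dividing through by $\Ccoer$. That is precisely how one arrives at the additive form $(\gamma_i-C_\rm{R})$ with $C_\rm{R}\lapprox\cshape/\Ccoer$.
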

\begin{corollary}\label{cor:aposteriori}
Under the assumptions of lemma \ref{lem:er} and lemma \ref{lem:dre},
\begin{equation}
\ap(\u-\U,\u-\U)\leq\Crel\etap^2(\U,\Omega)+\norm{\Ep}{H^{-2}(\Omega)}^2,
\end{equation}
and
\begin{equation}
\enorm{\U-\U_\ast}{\mesh_\ast}^2\leq\Cdrel\eta_\mesh^2(\U,\omega_{R_{\mesh\to\mesh_\ast}})+\norm{\Ep}{H^{-2}(\Omega)}^2.
\end{equation}
\end{corollary}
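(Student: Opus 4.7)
The plan is to derive Corollary \ref{cor:aposteriori} as a direct consequence of Lemmas \ref{lem:er}, \ref{lem:dre} and \ref{lem:boundarycontrol}, essentially by using the boundary-control lemma to absorb the stabilization boundary terms $\gamma_1\norm{\U}{3/2,\mesh}^2$ and $\gamma_2\norm{\partial_\nu\U}{1/2,\mesh}^2$ appearing in the reliability estimates into the element estimator $\eta_\mesh$.

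For the first inequality, I would begin with Lemma \ref{lem:er}, which yields
\[
\ap(\u-\U,\u-\U)\leq C_\rm{rel,1}\eta_\mesh^2(\U,\Omega)+\norm{\Ep}{H^{-2}(\Omega)}+C_\rm{rel,2}\Bigl(\gamma_1\norm{\U}{3/2,\mesh}^2+\gamma_2\left\norm{\textstyle\dfrac{\U}{\nu}\right}{1/2,\mesh}^2\Bigr).
\]
The linear $\norm{\Ep}{H^{-2}(\Omega)}$ term is converted into the squared form via Young's inequality, splitting it against a small coercivity factor of $\ap(\u-\U,\u-\U)$ so the residual term can be absorbed on the left. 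Then I would invoke Lemma \ref{lem:boundarycontrol}: for $\gamma_1,\gamma_2>C_\rm{R}$ we have
\[
\gamma_1\norm{\U}{3/2,\mesh}^2+\gamma_2\left\norm{\textstyle\dfrac{\U}{\nu}\right}{1/2,\mesh}^2\leq\frac{\max(\gamma_1,\gamma_2)}{\min(\gamma_1,\gamma_2)-C_\rm{R}}\,\Ccoer^{-1}\eta_\mesh^2(\U,\Omega).
\]
Substituting gives the first claim with $\Crel$ depending only on $C_\rm{rel,1}$, $C_\rm{rel,2}$, $\Ccoer$, $\gamma_1$, $\gamma_2$ and $\cshape$.

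For the second (discrete) inequality, I would run exactly the same argument starting from the discrete reliability bound in Lemma \ref{lem:dre}, which contains the boundary terms $\gamma_1\norm{\U}{3/2,R}$ and $\gamma_2\norm{\partial_\nu\U}{1/2,R}$ supported only on refined edges. After using Young's inequality to square the term $\norm{\Epp}{H^{-2}(\Omega)}$ and to pass the linear norm estimates into squared form (absorbing a fraction of $\enorm{\U_\ast-\U}{\mesh_\ast}^2$ on the left), I would apply the local version of Lemma \ref{lem:boundarycontrol} restricted to the refined set $R$. The localization is justified because the proof of Lemma \ref{lem:boundarycontrol} proceeds elementwise and the boundary terms on $R$ are controlled by the elementwise estimator on cells adjacent to $R$, i.e.\ by $\eta_\mesh^2(\U,\omega_{R_{\mesh\to\mesh_\ast}})$.

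The main obstacle I expect is the localization step in the second inequality: Lemma \ref{lem:boundarycontrol} is stated globally, and one must verify that its argument is truly local so that only the cells in the support extension $\omega_{R_{\mesh\to\mesh_\ast}}$ contribute. A secondary (but minor) technical point is the linear-to-squared conversion of $\norm{\Ep}{H^{-2}(\Omega)}$ and $\norm{\Epp}{H^{-2}(\Omega)}$; this is a routine Young's inequality step but requires choosing the small parameter so that the absorbed portion of the energy-type left-hand side is strictly less than one, which constrains the implicit constants but not the validity of the estimate.
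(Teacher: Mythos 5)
Your proposal is correct and is exactly the derivation the paper intends: the corollary is stated without proof immediately after Lemma \ref{lem:boundarycontrol}, precisely so that the stabilization terms in Lemmas \ref{lem:er} and \ref{lem:dre} can be absorbed into $\etap^2$ via that lemma, with a Young-type step turning the linear inconsistency terms $\norm{\Ep}{H^{-2}(\Omega)}$, $\norm{\Epp}{H^{-2}(\Omega)}$ into the squared form appearing in the corollary. The localization issue you flag for the second inequality is genuine --- the paper only states the boundary-control estimate globally, so as written its tools would yield $\eta_\mesh^2(\U,\Omega)$ rather than $\eta_\mesh^2(\U,\omega_{R_{\mesh\to\mesh_\ast}})$ on the right-hand side --- but this is a gap in the paper's own (omitted) argument rather than in yours, and your proposed elementwise localization of Lemma \ref{lem:boundarycontrol} to cells adjacent to the refined set is the natural repair.
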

\section{Convergence} 		
In section we show that the derived computable estimator \eqref{eq:estimator} when used to direct refinement will result in decreased error.
This will hinge on the estimator Lipschitz property of Lemma \ref{lem:elp}.
To show that procedure \eqref{eq:afem} exhibits convergence we must be able to relate the errors of consecutive discrete solutions.
In the conforming discrete method \eqref{eq:cdp} the symmetry of the bilinear form, consistency of the formulation and  finite-element spline space nesting will readily provide that via Galerkin Pythagoras in Lemma \ref{lem:gp}.
This is not the case in Nitsche's formulation \eqref{eq:ndp} since our formulation is no longer consistent with \eqref{eq:cwp}.
The proposed workaround is inspired by \cite{bonito2010quasi}.
%
\subsection{Error reduction} 
\begin{lemma}[Estimator Lipschitz property]\label{lem:elp}
Let $\mesh$ be a partition of $\Omega$ satisfying conditions \eqref{eq:sr}.
There exists a constant $\Clip>0$, depending only $\cshape$, such that for any cell $\face\in\mesh$ we have
\begin{equation}\label{eq:result:lem:elp}
|\eta_\mesh (\V,\face)-\eta_\mesh (\W,\face)|
\leq\Clip\semi{\V-\W}{H^2(\omega_\face)},
\end{equation}
holding for every pair of finite-element splines $\V$ and $\W$ in $\X_\mesh$.
\end{lemma}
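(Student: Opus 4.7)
The plan is to exploit that the estimator $\eta_\mesh(\cdot,\face)$ is essentially an $\ell^2$-aggregate of seminorms, each of which depends affinely on its argument, so the reverse triangle inequality will reduce the problem to bounding the same quadratic quantity evaluated at $\V-\W$ by $|\V-\W|_{H^2(\omega_\tau)}^2$.

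First I would apply the reverse triangle inequality in two layers. The outer $\ell^2$ reverse inequality gives
\[
|\eta_\mesh(\V,\face)-\eta_\mesh(\W,\face)|^2 \leq h_\face^4\bigl|\|f-\Op\V\|_{L^2(\tau)}-\|f-\Op\W\|_{L^2(\tau)}\bigr|^2 + \sum_{\sigma\subset\diff\tau}(\cdots),
\]
and then the $L^2$ reverse triangle inequality on each term, together with linearity of the jump operators, yields
\[
|\eta_\mesh(\V,\face)-\eta_\mesh(\W,\face)|^2
\leq h_\face^4\|\Op\e\|_{L^2(\tau)}^2
+\sum_{\sigma\subset\diff\tau}\Bigl(h_\sigma^3\bigl\|\jump{\textstyle\dfrac{\Lap\e}{\n_\sigma}}{\sigma}\bigr\|_{L^2(\sigma)}^2+h_\sigma\|\jump{\Lap\e}{\sigma}\|_{L^2(\sigma)}^2\Bigr),
\]
where $\e:=\V-\W$. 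Note that the $f$-terms cancel, which is critical since $f$ is not a function of $\V$.

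Second, I would bound each of the three pieces using the discrete inverse and trace estimates of Lemma~\ref{lem:ie} (keeping in mind $\e$ is piecewise polynomial on $\mesh$). For the interior residual, applying \eqref{eq:inve:lem:ie} with $s=2$, $t=4$ gives $\|\Op\e\|_{L^2(\tau)}\lapprox h_\tau^{-2}\semi{\e}{H^2(\tau)}$, so the first term is bounded by $\semi{\e}{H^2(\tau)}^2$. For the jump of $\Lap\e$ across an edge $\sigma\subset\diff\tau\cap\diff\tau'$, the discrete trace estimate \eqref{eq:dtrace:lem:ie} gives $\|\Lap\e\|_{L^2(\sigma)}^2\lapprox h_\sigma^{-1}\semi{\e}{H^2(\tau)}^2$ on each side, so $h_\sigma\|\jump{\Lap\e}{\sigma}\|_{L^2(\sigma)}^2\lapprox \semi{\e}{H^2(\tau\cup\tau')}^2$. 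Analogously for the normal-derivative jump of $\Lap\e$, combining \eqref{eq:dtrace:lem:ie} with an additional inverse estimate to absorb the extra derivative yields $\|\diff_{\n_\sigma}\Lap\e\|_{L^2(\sigma)}^2\lapprox h_\sigma^{-3}\semi{\e}{H^2(\tau)}^2$, so that $h_\sigma^3\|\jump{\diff_{\n_\sigma}\Lap\e}{\sigma}\|_{L^2(\sigma)}^2\lapprox \semi{\e}{H^2(\tau\cup\tau')}^2$.

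Finally I would sum over the edges $\sigma\subset\diff\tau$ and use the shape regularity \eqref{eq:sr} (mesh grading and finite-intersection) to replace the $h_\sigma$ by $h_\tau$ where convenient and to absorb the neighboring cells into $\omega_\tau$, yielding
\[
|\eta_\mesh(\V,\face)-\eta_\mesh(\W,\face)|^2\leq\Clip^2\,\semi{\V-\W}{H^2(\omega_\tau)}^2,
\]
with $\Clip$ depending only on $\cshape$ (through $\cinv$, $\cdtrace$ and the finite-intersection count). The only subtle point, and the likely source of bookkeeping headaches rather than real difficulty, is ensuring that the two-sided nature of the jumps is handled correctly so that each cell neighboring $\tau$ contributes to $\omega_\tau$ exactly as required by the shape-regularity constants; beyond that the argument is a routine combination of reverse triangle inequality with discrete inverse estimates, with no need for Galerkin orthogonality, consistency, or any property of the exact solution $\u$.
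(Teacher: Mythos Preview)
Your proof is correct and follows essentially the same route as the paper: reverse triangle inequality to strip the data $f$ and reduce to the estimator evaluated at $\e=\V-\W$, followed by the discrete inverse and trace estimates of Lemma~\ref{lem:ie} applied termwise, with shape regularity absorbing the neighbouring cells into $\omega_\tau$. If anything, your handling of the outer $\ell^2$ aggregation via the reverse triangle inequality is cleaner than the paper's first displayed identity, which is written as an equality but should really be read as a term-by-term bound.
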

\begin{proof}
Let $\V$ and $\W$ be finite-element splines in $\X_\mesh$ and let $\face$ be a cell in partition $\mesh$.
\begin{equation}\label{eq1:lem:elp}
\begin{split}
\eta_\mesh(\V,\face)-\eta_\mesh(\W,\face)&=
h_\face^2\left(\norm{\eff-\Op\V}{L^2(\face)}
-\norm{\eff-\Op\W}{L^2(\face)}\right)\\
&+\sum_{\sigma\subset\diff\tau}h_\sigma^{1/2}\left(
\norm{\jump{\Lap\V}{\sigma}}{L^2(\sigma)}
-\norm{\jump{\Lap\W}{\sigma}}{L^2(\sigma)}
\right)\\
&+\sum_{\sigma\subset\diff\tau}h_\sigma^{3/2}\left(
\norm{\jump{\diffe\Lap\V}{\sigma}}{L^2(\edge)}
-\norm{\jump{\diffe\Lap\W}{\sigma}}{L^2(\edge)}
\right).
\end{split}
\end{equation}
Treating the interior term,
\begin{equation}
\begin{split}
\norm{\eff-\Op\V}{L^2(\face)}-\norm{\eff-\Op\W}{L^2(\face)}
&\leq\semi{\V-\W}{H^4(\face)}
\leq\cinv h^{-2}_\face\semi{\V-\W}{H^2(\face)}.
\end{split}
\end{equation}
Treating the edge terms we have
\begin{equation}
\norm{\jump{\Lap\V}{\sigma}}{L^2(\sigma)}
-\norm{\jump{\Lap\W}{\sigma}}{L^2(\sigma)}
\leq\norm{\jump{\Lap\V-\Lap\W}{\sigma}}{L^2(\sigma)}.
\end{equation}
Let $\face'$ from $\mesh$ be a cell that shares the edge $\sigma$, i.e $\tau'$ is an adjacent cell to $\face$.
For any finite-element spline $\V\in\Xp$ we have
\begin{equation}
\norm{\jump{\V}{\edge}}{\edge}
\leq\cdtrace\left(
h_\sigma^{-1/2}\norm{\V}{\face}+h_\sigma^{-1/2}\norm{\V}{\face'}
\right)
\leq\cdtrace h_\sigma^{-1/2}\norm{\V}{\omega_\face}.
\end{equation}
Replacing $\V$ with $\Lap\V-\Lap\W$ gives
\begin{equation}
h_\sigma^{1/2}\norm{\jump{\Lap\V-\Lap\W}{\sigma}}{\sigma}
\leq\cinv\cdtrace\semi{\V-\W}{H^2(\omega_\face)}.
\end{equation}
Similarly, we have
\begin{equation}
h_\sigma^{3/2}\left(
\norm{\jump{\diffe\Lap\V}{\sigma}}{L^2(\edge)}
-\norm{\jump{\diffe\Lap\W}{\sigma}}{L^2(\edge)}
\right)
\leq\cinv\cdtrace\semi{\V-\W}{H^2(\omega_\face)}.
\end{equation}
It then follows from \eqref{eq1:lem:elp}
\begin{equation}
\begin{split}
|\eta_\mesh(\V,\tau)-\eta_\mesh(\W,\tau)|&\leq
\cinv(\semi{\V-\W}{H^2(\face)}+2\cdtrace\semi{\V-\W}{H^2(\omega_\face)}),\\
&\leq\cinv(1+2\cdtrace)\semi{\V-\W}{H^2(\omega_\face)}.
\end{split}
\end{equation}
\end{proof}

\begin{lemma}[Estimator error reduction]\label{lem:err}
Let $\mesh$ be a partition of $\Omega$ satisfying conditions \eqref{eq:sr},
let $\marked\subseteq\mesh$
and let $\mesh_\ast=\mathbf{REFINE}\,[\mesh,\marked]$.
There exists constants $\lambda\in(0,1)$ and $\Cest>0$, depending only on $\cshape$,
such that for any $\delta>0$ it holds that for any pair of finite-element splines $\V\in\X_\mesh$ and $\V_\ast\in\X_{\mesh_\ast}$
we have
\begin{equation}\label{eq:result:lem:err}
\eta_{\mesh_\ast}^2(\V_\ast,\Omega)
\leq(1+\delta)\left\{\eta_\mesh^2(\V,\Omega)-{\textstyle\frac{1}{2}}\eta_\mesh^2(\V,\marked)\right\}+\cshape(1+{\textstyle\frac{1}{\delta}})\enorm{\V-\V_\ast}{}^2.
\end{equation}
\end{lemma}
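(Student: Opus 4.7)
The plan is to combine the local Lipschitz estimate of Lemma \ref{lem:elp} with a monotonicity comparison between $\eta_{\mesh_\ast}(\V,\cdot)$ and $\eta_\mesh(\V,\cdot)$ that quantifies how refinement shrinks the $h$-weights. The desired inequality splits cleanly into these two independent steps: first transfer from $\V_\ast$ to $\V$ on the fine mesh, then pass from the fine to the coarse mesh keeping the spline argument fixed.

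For the first step I would apply $(a+b)^2 \leq (1+\delta)a^2 + (1+\tfrac{1}{\delta})b^2$ cellwise on $\mesh_\ast$ with $a = \eta_{\mesh_\ast}(\V,\tau_\ast)$ and $b = \eta_{\mesh_\ast}(\V_\ast,\tau_\ast) - \eta_{\mesh_\ast}(\V,\tau_\ast)$, invoke Lemma \ref{lem:elp} to bound $|b|\leq \Clip\,\semi{\V-\V_\ast}{H^2(\omega_{\tau_\ast})}$, sum over $\tau_\ast \in \mesh_\ast$, and use the finite-intersection property \eqref{eq:sr} together with the equivalence $\semi{w}{H^2(\Omega)} \lapprox \enorm{w}{}$ to obtain
\begin{equation}
\eta_{\mesh_\ast}^2(\V_\ast,\Omega) \leq (1+\delta)\,\eta_{\mesh_\ast}^2(\V,\Omega) + \cshape\bigl(1+\tfrac{1}{\delta}\bigr)\enorm{\V-\V_\ast}{}^2,
\end{equation}
after absorbing $\Clip^2$ and the norm-equivalence constant into $\cshape$.

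For the second step I would establish the monotonicity bound
\begin{equation}
\eta_{\mesh_\ast}^2(\V,\Omega) \leq \eta_\mesh^2(\V,\Omega) - \tfrac{1}{2}\,\eta_\mesh^2(\V,\marked).
\end{equation}
Cells in $\mesh \setminus \marked$ persist unchanged in $\mesh_\ast$ and contribute identically on both sides. For $\tau \in \marked$, dyadic refinement sends each child $\tau_\ast \subset \tau$ to diameter $h_{\tau_\ast}\leq h_\tau/2$, which shrinks the interior residual term by a factor $(1/2)^4$ and the two edge-jump terms by $(1/2)^3$ and $1/2$ respectively; moreover, the edges newly produced in the interior of $\tau$ contribute zero because $\jump{\Lap\V}{\cdot}$ and $\jump{\diffe\Lap\V}{\cdot}$ vanish on them, since $\V$ is a polynomial on $\tau$. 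Taking the worst of these scalings gives $\sum_{\tau_\ast \subset \tau}\eta_{\mesh_\ast}^2(\V,\tau_\ast)\leq \tfrac{1}{2}\eta_\mesh^2(\V,\tau)$, from which the monotonicity bound follows by summation.

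Substituting the monotonicity into the bound of the first step yields \eqref{eq:result:lem:err} with $\Cest$ depending only on $\cshape$ and $\lambda = 1/2$. The main obstacle is confirming the factor $\tfrac{1}{2}$ in the second step uniformly across the three terms of \eqref{eq:indicator} and under all refinement patterns produced by \textbf{REFINE}; the worst-case scaling is driven by the $h_\sigma \norm{\jump{\Lap\V}{\sigma}}{L^2(\sigma)}^2$ term, and one must also check the behaviour on edges at the boundary of a marked region, where admissibility constraints may disturb the simple dyadic picture. The first step is routine once the Lipschitz estimate is in hand.
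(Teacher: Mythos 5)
Your proposal is correct and follows essentially the same two-step route as the paper: a cellwise Young/Lipschitz argument transferring $\V_\ast$ to $\V$ on the fine mesh, followed by the refinement-monotonicity bound with factor $\tfrac{1}{2}$ driven by $h_{\tau_\ast}\leq h_\tau/2$ and the vanishing of jumps of $\V$ on newly created interior edges. If anything, you are slightly more careful than the paper in identifying the first-order edge-jump term as the worst-case scaling and in flagging the behaviour of old edges at the boundary of the marked region, which the paper's proof passes over silently.
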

\begin{proof}
Let $\marked\subseteq\mesh$ be a set of marked elements from partition $\mesh$
and let $\mesh_\ast=\mathbf{REFINE}\,[\mesh,\marked]$.
For notational simplicity we denote $\bb{X}_{\mesh_\ast}$ and $\eta_{\mesh_\ast}$ by $\X_\ast$ and $\eta_\ast$, respectively.
Let $\V$ and $\V_\ast$ be the respective finite-element splines from $\Xp$ and $\X_\ast$.
Let $\face$ be a cell from partition $\mesh_\ast$.
In view of the Lipschitz property of the estimator (Lemma \ref{lem:elp})
and the nesting $\Xp\subseteq\X_\ast$,
\begin{equation}
\eta_{\ast}^2(\V_\ast,\face)
\lapprox\eta_{\ast}^2(\V,\face)+\semi{\V-\V_\ast}{H^2(\omega_\face)}^2
+2\eta_{\ast}(\V_\ast,\face)\semi{\V-\V_\ast}{H^2(\omega_\face)}.
\end{equation}
Given any $\delta>0$, an application of Young's inequality on the last term gives
\begin{equation}
2\eta_{\ast}(\V_\ast,\face)\semi{\V-\V_\ast}{H^2(\omega_\face)}
\leq\delta\eta_{\ast}^2(\V_\ast,\face)+{\textstyle\frac{1}{\delta}}\semi{\V-\V_\ast}{H^2(\omega_\face)}^2.
\end{equation}
We now have
\begin{equation}
\eta_{\ast}^2(\V_\ast,\face)
\lapprox(1+\delta)\eta_{\ast}^2(\V,\face)
+(1+{\textstyle\frac{1}{\delta}})\semi{\V-\V_\ast}{H^2(\omega_\face)}^2.
\end{equation}
Recalling that the partition cell are disjoint with uniformly bounded support extensions, we may sum over all the cells $\face\in\mesh_\ast$ to obtain
\begin{equation}
\eta_\ast^2(\V_\ast,\mesh_\ast)\leq(1+\delta)\eta_\ast^2(\V,\mesh_\ast)+\cshape(1+{\textstyle\frac{1}{\delta}})\enorm{\V-\V_\ast}{}^2.
\end{equation}
It remains to estimate $\eta_\ast^2(\V,\mesh_\ast)$. Let $|\marked|$ be the sum areas of all cells in $\marked$.
For every marked element $\face\in\marked$ define $\mesh_{\ast,\marked}=\{\rm{child}(\face):\face\in\marked\}$.
Let $b>0$ denote the number of bisections required to obtain the conforming partition $\mesh_\ast$ from $\mesh$.
Let $\face_\ast$ be a child of a cell $\face\in\marked$.
Then $h_{\face_\ast}\leq2^{-1}h_\face$.
Noting that $\V\in\Xp$ we have no jumps within $\face$
\begin{equation}
\eta_\ast^2(\V,\tau_\ast)=h_{\tau_\ast}^4\norm{f-\Op\V}{\tau_\ast}^2\leq(2^{-1}h_\tau)^4\norm{f-\Op\V}{\tau_\ast}^2,
\end{equation}
summing over all children
\begin{equation}
\sum_{\tau_\ast\in\rm{children}(\tau)}\eta_\ast^2(\V,\tau_\ast)
\leq2^{-1}\eta_\mesh^2(\V,\tau),
\end{equation}
and we obtain by disjointness of partitions an estimate on the error reduction
\begin{equation}
\sum_{\face_\ast\in\mesh_{\ast,M}}\eta_{\ast}^2(\V,\face_\ast)\leq2^{-1}\est_\mesh^2(\V,\marked).
\end{equation}
For the remaining cells $T\in\mesh\backslash\marked$, the estimator monotonicity implies $\eta_{\mesh_\ast}(\V,T)\leq\eta_\mesh(\V,T)$.
Decompose the partition $\mesh$ as union of marked cells in $\marked$ and their complement $\mesh\backslash\marked$ to conclude the total error reduction obtained by $\ms{REFINE}$ and the choice of Dorfler parameter $\theta$
\begin{equation}
\eta_{\mesh_\ast}^2(\V,\Omega)\leq\estP^2(\V,\Omega\backslash\marked)+2^{-1}\estP^2(\V,\marked)
=\estP^2(\V,\Omega)-{\textstyle\frac{1}{2}}\eta^2_\mesh(\V,\marked).
\end{equation}
\end{proof}

\subsection{Conforming discretization}
\begin{lemma}[Galerkin Pythaguras]\label{lem:gp}
Let $\mesh$ and $\mesh_\ast$ be partitions of $\Omega$ satisfying conditions \eqref{eq:sr} with $\mesh_\ast\ge\mesh$
and let $\U\in\Xp$ and $\U_\ast\in\X_{\mesh_\ast}$ be the spline solutions to \eqref{eq:cdp}.
Then
\begin{equation}\label{eq:result:lem:gp}
\enorm{\u-\U_\ast}{}^2=\enorm{\u-\U_\ast}{}^2-\enorm{\U_\ast-\U}{}^2.
\end{equation}
\end{lemma}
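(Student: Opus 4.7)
The plan is to use the standard parallelogram-type expansion together with Galerkin orthogonality for the conforming discretization \eqref{eq:cdp}. First I would write the telescoping decomposition
\begin{equation*}
\u - \U = (\u - \U_\ast) + (\U_\ast - \U),
\end{equation*}
and then expand the squared energy norm by bilinearity:
\begin{equation*}
\enorm{\u-\U}{}^2 = a(\u-\U_\ast,\u-\U_\ast) + 2\,a(\u-\U_\ast,\U_\ast-\U) + a(\U_\ast-\U,\U_\ast-\U).
\end{equation*}
The first and third terms are exactly $\enorm{\u-\U_\ast}{}^2$ and $\enorm{\U_\ast-\U}{}^2$, respectively, so it remains to show that the cross term vanishes.

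To kill the cross term I would invoke the nesting assumption $\mesh_\ast \ge \mesh$, which at the level of the spline spaces gives $\Xp \subseteq \X_{\mesh_\ast}$. Consequently $\U_\ast - \U$ belongs to $\X_{\mesh_\ast}$, and the Galerkin orthogonality relation \eqref{eq:cgo} applied to the finer partition $\mesh_\ast$ yields
\begin{equation*}
a(\u - \U_\ast, \V_\ast) = 0 \quad \forall \V_\ast \in \X_{\mesh_\ast},
\end{equation*}
and in particular $a(\u-\U_\ast,\U_\ast-\U) = 0$. Rearranging the expansion then gives the desired identity $\enorm{\u-\U_\ast}{}^2 = \enorm{\u-\U}{}^2 - \enorm{\U_\ast-\U}{}^2$ (correcting the apparent typo on the left side of the stated identity).

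There is no real obstacle here: the proof hinges only on bilinearity, symmetry of $a$, the nesting of discrete spaces, and the fact that the conforming discrete problem \eqref{eq:cdp} is consistent with the weak formulation \eqref{eq:cwp}. The only subtle point worth emphasizing is that this clean Pythagorean identity is genuinely a feature of the conforming formulation \eqref{eq:cdp} and relies crucially on \eqref{eq:cgo}; as noted in the discussion preceding Section~4.1, the Nitsche formulation \eqref{eq:ndp} lacks the corresponding consistency and so this identity will only hold up to an inconsistency term when $\ap$ replaces $a$. That generalization is precisely what the subsequent analysis (borrowing the dG workaround of \cite{bonito2010quasi}) must address, but here for the conforming case no such correction is needed.
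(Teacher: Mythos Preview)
Your proof is correct and follows essentially the same approach as the paper: expand the squared energy norm by bilinearity and use Galerkin orthogonality \eqref{eq:cgo} on the fine mesh $\mesh_\ast$ (via the nesting $\Xp\subseteq\X_{\mesh_\ast}$) to annihilate the cross term. The paper expands $\a(\u-\U_\ast,\u-\U_\ast)$ rather than $\a(\u-\U,\u-\U)$, leading to two cross terms that coincide by symmetry, but the substance is identical; your presentation is in fact slightly more direct.
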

\begin{proof}
At first we express
\begin{equation}\label{eq1:lem:gp}
\begin{split}
\a(\u-\U_\ast,\u-\U_\ast)&=\a(\u-\U_\ast,\u-\U+\U-\U_\ast),\\
&=\a(\u-\U,\u-\U)-\a(\upp-\up,\upp-\up)\\
&+\a(\up-\upp,\u-\upp)+\a(\u-\U_\ast,\U-\U_\ast).
\end{split}
\end{equation}
Recognizing that
\begin{equation}
\a(\u-\U_\ast,\U-\U_\ast)=\a(\U-\U_\ast,\u-\U)=0,
\end{equation}
we arrive at
\begin{equation}
\a(\u-\U_\ast,\u-\U_\ast)=\a(\u-\U,\u-\U)-\a(\upp-\up,\upp-\up).
\end{equation}
\end{proof}

\begin{theorem}[Convergence of conforming AFEM]\label{thm:ConvConf}
For a contractive factor $\alpha\in(0,1)$ and a constant $\Cest>0$, given any successive mesh partitions $\mesh$ and $\mesh_\ast$ satisfying conditions \eqref{eq:sr}, $f\in L^2(\Omega)$ and Dolfer parameter $\theta\in(0,1]$, the adaptive procedure $\mathbf{AFEM}\,[\mesh,f,\theta]$ with produce two successive solutions $\U\in\Xp$ and $\U_\ast\in\Xpp$ to problem \eqref{eq:cdp} for which
\begin{equation}\label{eq:result:thm:ConvConf}
\enorm{\u-\U_\ast}{}^2+\Cest\eta_{\mesh_\ast}^2(\U_\ast,\Omega)\leq\alpha\big(\enorm{\u-\U}{}^2+\Cest\eta_{\mesh}^2(\U,\Omega)\big).
\end{equation}
\end{theorem}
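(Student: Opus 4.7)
The plan is to establish the contraction by combining the Galerkin Pythagoras identity (Lemma~\ref{lem:gp}) with the estimator error reduction (Lemma~\ref{lem:err}), and closing the loop via the D\"orfler criterion \eqref{markingstrategy} together with reliability (Corollary~\ref{cor:aposteriori}). Since the target quantity is the weighted sum $\enorm{\u-\U}{}^2+\Cest\,\eta_\mesh^2(\U,\Omega)$, I will estimate each summand at the refined level $\mesh_\ast$ separately and then recombine.

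First, Lemma~\ref{lem:gp} supplies the identity $\enorm{\u-\U_\ast}{}^2=\enorm{\u-\U}{}^2-\enorm{\U_\ast-\U}{}^2$, which carries a favourable negative $\enorm{\U_\ast-\U}{}^2$ term. Second, applying Lemma~\ref{lem:err} with $\V=\U$ and $\V_\ast=\U_\ast$, and using the D\"orfler property $\eta_\mesh^2(\U,\marked)\geq\theta\,\eta_\mesh^2(\U,\Omega)$, yields
\begin{equation*}
\eta_{\mesh_\ast}^2(\U_\ast,\Omega)\leq(1+\delta)(1-\theta/2)\,\eta_\mesh^2(\U,\Omega)+\cshape(1+1/\delta)\,\enorm{\U-\U_\ast}{}^2.
\end{equation*}
Weighting this inequality by $\Cest$ and adding it to the Pythagoras identity lets the positive $\enorm{\U_\ast-\U}{}^2$ on the right be absorbed by the negative one, provided $\Cest\,\cshape(1+1/\delta)\leq 1$.

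What remains is the intermediate inequality $\enorm{\u-\U_\ast}{}^2+\Cest\,\eta_{\mesh_\ast}^2(\U_\ast,\Omega)\leq\enorm{\u-\U}{}^2+\Cest(1+\delta)(1-\theta/2)\,\eta_\mesh^2(\U,\Omega)$. To upgrade this into a genuine contraction of the weighted quantity, I split $\enorm{\u-\U}{}^2=(1-\mu)\enorm{\u-\U}{}^2+\mu\,\enorm{\u-\U}{}^2$ for some $\mu\in(0,1)$ and apply reliability $\enorm{\u-\U}{}^2\leq\Crel\,\eta_\mesh^2(\U,\Omega)$ to the second piece. This trades a small fraction of the error against the estimator, so the estimator factor becomes strictly less than $\Cest$ and the error factor strictly less than $1$, giving the desired $\alpha<1$.

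The main obstacle is a clean bookkeeping of the parameter hierarchy. I will fix $\delta>0$ small enough that $(1+\delta)(1-\theta/2)<1$; then $\Cest$ is forced below $\delta/[\cshape(1+\delta)]$ by the Pythagoras absorption; finally, $\mu$ must be chosen small enough that $\mu\,\Crel<\Cest\bigl[1-(1+\delta)(1-\theta/2)\bigr]$, at which point $\alpha:=\max\{1-\mu,\,(1+\delta)(1-\theta/2)+\mu\,\Crel/\Cest\}$ lies strictly in $(0,1)$ and realizes \eqref{eq:result:thm:ConvConf}. None of the individual estimates is hard, but the simultaneous satisfiability of these three constraints against the five constants $\cshape,\Crel,\theta,\delta,\Cest$ is the only delicate point.
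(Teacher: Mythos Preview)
Your proposal is correct and follows essentially the same route as the paper's own proof: combine Galerkin Pythagoras with the estimator reduction (choosing $\Cest$ so that the $\enorm{\U_\ast-\U}{}^2$ terms cancel), apply D\"orfler marking to the estimator, then split the error term and invoke reliability to close the contraction. The paper fixes $\Cest^{-1}:=\cshape(1+1/\delta)$ exactly (you ask for $\leq$, which is equivalent), and performs the same split of $\enorm{\u-\U}{}^2$, writing $\alpha e_\mesh^2+(1-\alpha)e_\mesh^2$ where your $\mu$ plays the role of $1-\alpha$; the two parameter selections are the same computation in different clothing. One cosmetic discrepancy: the paper uses $-\eta_\mesh^2(\marked)\leq-\theta^2\eta_\mesh^2$ in the proof, whereas you use $\theta$ (which matches the marking criterion \eqref{markingstrategy} as stated); either convention works and only shifts the admissible range of $\delta$.
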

\begin{proof}
Adopt the following abbreviations:
\begin{eqnarray}
e_\mesh=\enorm{\u-\U}{},&E_\ast=\enorm{\U_\ast-\U}{},\\
\eta_\mesh=\eta_{\mesh}(\U,\Omega),&\eta_\mesh(\marked)=\eta_{\mesh}(\U,\marked).
\end{eqnarray}
Define constants $\qest(\theta,\delta):=(1+\delta)(1-{\frac{\theta^2}{2}})<1$ and $\Cest^{-1}:=\cshape(1+{\frac{1}{\delta}})>0$ so that in view of Dorlfer $-\eta_\mesh^2(\marked)\leq-\theta^2\eta_\mesh^2$,
\begin{equation}
(1+\delta)\left\{\eta_\mesh^2(\Omega)-{\textstyle\frac{1}{2}}\eta_\mesh^2(\marked)\right\}\leq q_\rm{est}\eta_\mesh^2,
\end{equation}
and \eqref{eq:result:lem:EstRed} reads $\eta_{\mesh_\ast}^2\leq\qest\eta_\mesh^2+\Cest^{-1} E_\ast^2$.
Galerkin orthogonality and estimator error reduction estimate \eqref{eq:result:lem:EstRed}
\begin{equation}
\begin{split}
e_{\mesh_\ast}^2+\Cest\eta_{\mesh_\ast}^2&\leq e_\mesh^2-E^2_\ast+\Cest\left(\qest\eta_\mesh^2+\Cest^{-1} E_\ast^2\right)=e_{\mesh}^2+\qest\Cest\eta_\mesh^2
\end{split}
\end{equation}
Let $\alpha$ be a positive parameter and express $e_{\mesh}^2=\alpha e_{\mesh}^2+(1-\alpha)e_{\mesh}^2$. Invoking on the reliability estimate $e_{\mesh}^2\leq \Crel\eta_\mesh^2$ on one of the decomposed terms gives
\begin{equation}
e_{\mesh_\ast}^2+\qest\Cest\eta_{\mesh_\ast}^2\leq\alpha e_{\mesh}^2+\left[(1-\alpha)\Crel+\qest\Cest\right]\eta_\mesh^2.
\end{equation}
Choose $\delta>0$ with $\delta<\frac{\theta^2}{2-\theta^2}$ so that $q_\rm{est}(\theta,\delta)\in(0,1)$ and we may choose a contractive $\alpha<1$ for which $(1-\alpha)\Crel+\qest\Cest\leq\textstyle\alpha \Cest$.
Indeed,
\begin{equation}
(1-\alpha)\Crel+\qest\Cest\leq\textstyle\alpha \Cest\iff\frac{\qest\Cest+\Crel}{\Cest+\Crel}\leq\alpha
\end{equation}
and $\alpha_\rm{max}(\theta,\delta):=\frac{\qest\Cest+\Crel}{\Cest+\Crel}<1$ so pick $\alpha\in(\alpha_\rm{max}(\theta,\delta),1)$.
\end{proof}
\subsection{Nitsche's discretization}
\begin{lemma}[Mesh perturbation]\label{lem:MP}
Let $\mesh$ and $\mesh_\ast$ be successive partitions satisfying conditions \eqref{eq:sr} which are obtained by $\ms{REFINE}$.
Then for a constant $\Ccomp>0$, depending only on $\cshape$,
we have for any $\delta>0$
\begin{equation}
\app(\v,\v)\leq(1+4\delta\Ccoer)\ap(\v,\v)+\frac{\Ccomp}{\delta}\left(\gamma_1\norm{\v}{3/2,\mesh}^2+\gamma_2\left\norm{\textstyle\dfrac{\v}{\nu}\right}{1/2,\mesh}^2\right),
\end{equation}
holding for every function $\v\in H^2(\Omega)$.
\end{lemma}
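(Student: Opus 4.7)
The plan is to expand $\app(v,v)-\ap(v,v)$ and to realize that it is purely a sum of boundary integrals. Indeed, the interior form $a(v,v)=\norm{\Lap v}{L^2(\Omega)}^2$ is mesh-independent, so only the boundary contributions in~\eqref{eq:nbf} change. These split into two families: (i) projection-difference cross terms $-2\int_\Gamma(\Pipp-\Pip)(\Lap v)\,\partial_\nu v$ and $+2\int_\Gamma\partial_\nu[(\Pipp-\Pip)(\Lap v)]\,v$; and (ii) stabilization-weight differences $\gamma_1\int_\Gamma(h_{\Gamma,\ast}^{-3}-h_\Gamma^{-3})v^2$ and $\gamma_2\int_\Gamma(h_{\Gamma,\ast}^{-1}-h_\Gamma^{-1})(\partial_\nu v)^2$, both non-negative since $h_{\Gamma,\ast}\le h_\Gamma$ on~$\Gamma$.

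The weight-difference terms are handled first because they are $\delta$-independent. Since one call to \textbf{REFINE} performs only a uniformly bounded number of bisections per cell, mesh grading yields $h_\Gamma\lapprox h_{\Gamma,\ast}$ pointwise on~$\Gamma$ with a constant depending only on~$\cshape$, hence $h_{\Gamma,\ast}^{-s}-h_\Gamma^{-s}\lapprox h_\Gamma^{-s}$ for $s\in\{1,3\}$. The two weight-difference integrals are therefore dominated by a $\cshape$-dependent multiple of $\gamma_1\norm{v}{3/2,\mesh}^2+\gamma_2\norm{\partial_\nu v}{1/2,\mesh}^2$, which is absorbed into the $\Ccomp/\delta$ term after fixing $\Ccomp$ large enough.

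For the two cross terms, insert matching powers of $h_\Gamma$ and apply Cauchy--Schwarz so that the trace-of-$v$ factor is exactly one of the mesh-dependent boundary norms: the first cross term is bounded by $\norm{h_\Gamma^{1/2}(\Pipp-\Pip)(\Lap v)}{L^2(\Gamma)}\cdot\norm{\partial_\nu v}{1/2,\mesh}$, and the second by $\norm{h_\Gamma^{3/2}\partial_\nu[(\Pipp-\Pip)(\Lap v)]}{L^2(\Gamma)}\cdot\norm{v}{3/2,\mesh}$. Using the stability and discrete-trace estimates of Lemma~\ref{lem:L2estimates} on both $\Pip$ and $\Pipp$, together with the bounded-bisection grading above to reconcile $h_\Gamma$ and $h_{\Gamma,\ast}$ on refined edges, one obtains $\norm{h_\Gamma^{1/2}(\Pipp-\Pip)(\Lap v)}{L^2(\Gamma)}^2\lapprox\norm{\Lap v}{L^2(\Omega)}^2$ and the analogous bound with $h_\Gamma^{3/2}\partial_\nu$ on the left. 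Young's inequality with parameter $\delta$ then separates each cross term into a $\delta$-weighted piece proportional to $\norm{\Lap v}{L^2(\Omega)}^2$ and a $\delta^{-1}$-weighted piece proportional to $\gamma_1\norm{v}{3/2,\mesh}^2+\gamma_2\norm{\partial_\nu v}{1/2,\mesh}^2$.

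Closing, I would re-absorb the $\delta$-weighted interior piece back into $\ap(v,v)$ by coercivity (Lemma~\ref{lem:csvnbf}): $\norm{\Lap v}{L^2(\Omega)}^2\le\enorm{v}{\mesh}^2\le\Ccoer^{-1}\ap(v,v)$. This yields a bound of the form $(1+C\delta)\,\ap(v,v)+(C'/\delta)(\gamma_1\norm{v}{3/2,\mesh}^2+\gamma_2\norm{\partial_\nu v}{1/2,\mesh}^2)$ with constants depending only on $\cshape$ and $\Ccoer$; rescaling $\delta$ brings the first factor into the advertised form $(1+4\delta\Ccoer)$ and hides the remaining constants in $\Ccomp$. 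The main technical obstacle is the projection-difference estimate of the preceding paragraph: because $\Pip$ and $\Pipp$ target polynomial spaces tied to different meshes, the bound on $(\Pipp-\Pip)(\Lap v)$ in $L^2(\Gamma)$ is not a single projection-stability estimate but requires decomposing each coarse boundary edge $\sigma\in\bedges_\mesh$ into its refined children $\sigma_\ast\subset\sigma$, applying~\eqref{eq:inverse:lem:L2estimates} on each sub-edge, and summing via the finite-intersection property~\eqref{eq:sr}; this is the device borrowed from the discontinuous Galerkin treatment in~\cite{bonito2010quasi}. Everything else reduces to routine Cauchy--Schwarz/Young's-inequality bookkeeping.
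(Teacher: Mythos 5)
Your proposal is correct and follows essentially the same route as the paper: expand $\app(\v,\v)-\ap(\v,\v)$ into boundary integrals, control the projection cross terms via the discrete trace and inverse estimates of Lemma~\ref{lem:L2estimates} together with Cauchy--Schwarz with matching powers of $h_\Gamma$, apply Young's inequality with parameter $\delta$, use the grading $h_\sigma\leq\cshape h_{\sigma_\ast}$ to convert the $\mesh_\ast$-weighted boundary norms back to $\mesh$-weighted ones, and absorb the $\delta$-weighted interior piece into $\ap(\v,\v)$ by coercivity. The only cosmetic difference is that you estimate the combined difference $(\Pipp-\Pip)(\Lap\v)$ by the triangle inequality, whereas the paper bounds the $\Pip$ and $\Pipp$ contributions separately before recombining; the two are interchangeable.
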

\begin{proof}
Given any $\v\in H^2(\Omega)$, we write
\begin{equation}\label{eq1:lem:MP}
\begin{split}
\app(\v,\v)=&\ap(\v,\v)+2\bigg(\int_\Gamma\Pip(\Lap\v){\textstyle\dfrac{\v}{\nu}}
-\int_\Gamma{\textstyle\dfrac{\Pip(\Lap\v)}{\nu}}\v\bigg)
-\gamma_1\left(\norm{\v}{\mesh,3/2}^2-\norm{\v}{\mesh_\ast,3/2}^2\right)\\
&-2\bigg(\int_\Gamma\Pipp(\Lap\v){\textstyle\dfrac{\v}{\nu}}
-\int_\Gamma{\textstyle\dfrac{\Pipp(\Lap\v)}{\nu}}\v\bigg)
-\gamma_2\left(\left\norm{\textstyle\dfrac{\v}{\nu}\right}{\mesh,1/2}^2
-\left\norm{\textstyle\dfrac{\v}{\nu}\right}{\mesh_\ast,1/2}^2\right).
\end{split}
\end{equation}
Look at the boundary integral terms depending on $\mesh$. Let $\sigma\in\bedges_\mesh$, which would be an edge to some cell $\tau\in\mesh$.
\begin{equation}\label{eq2:lem:MP}
\begin{split}
\int_\sigma\Pip(\Lap\v)\dfrac{\v}{\n_\sigma}&\leq\norm{\Pip(\Lap\v)}{\sigma}\left\norm{\textstyle\dfrac{\v}{\n_\sigma}\right}{\sigma}
\leq\cdtrace\cproj h_\sigma^{-1/2}\norm{\Lap\v}{\tau}\left\norm{\textstyle\dfrac{\v}{\n_\sigma}\right}{\sigma}.
\end{split}
\end{equation}
Summing \eqref{eq2:lem:MP} over all $\sigma\in\bedges_\mesh$ and an application of Schwarz's inequality on the summation would give
\begin{equation}
\begin{split}
\bigg|\int_\Gamma\Pip(\Lap\v){\textstyle\dfrac{\v}{\nu}}\bigg|&
\lapprox\bigg(\sum_{\sigma\in\bedges_\mesh}h_\sigma^{-1}\left\norm{\textstyle\dfrac{\v}{\n_\sigma}\right}{\sigma}^2\bigg)^{1/2}
\bigg(\sum_{\tau\in\mesh:\diff\tau\cap\Gamma\neq\emptyset}\norm{\Lap\v}{\tau}^2\bigg)^{1/2}\\
&\leq\left\norm{\textstyle\dfrac{\v}{\nu}\right}{\mesh,1/2}\norm{\Lap\v}{L^2(\Omega)}
\end{split}
\end{equation}
Similarly, using the inverse-estimate $\norm{\dfrac{\Pip(\Lap\v)}{\n_\sigma}}{\sigma}\leq\cinv h_\sigma^{-1}\norm{\Pip(\Lap\v)}{\sigma}$, we obtain
\begin{equation}
\bigg|\int_\Gamma{\textstyle\dfrac{\Pip(\Lap\v)}{\nu}}\v\bigg|\leq\cdtrace\cinv\cproj\norm{\v}{\mesh,3/2}\norm{\Lap\v}{L^2(\Omega)}.
\end{equation}
We carry the same reasoning for the remaining counterpart boundary integral.
Employing Young's inequality with $\delta>0$ we arrive at
\begin{equation}\label{eq3:lem:MP}
\begin{split}
\app(\v,\v)&\lapprox\ap(\v,\v)
+4\delta\norm{\Lap\v}{L^2(\Omega)}^2
+\left(\textstyle\frac{1}{\delta}+\gamma_1\right)\norm{\v}{\mesh,3/2}^2
+\left(\textstyle\frac{1}{\delta}+\gamma_1\right)\norm{\v}{\mesh_\ast,3/2}^2\\
&+\left(\textstyle\frac{1}{\delta}+\gamma_2\right)\left\norm{\textstyle\dfrac{\v}{\nu}\right}{\mesh,1/2}^2
+\left(\textstyle\frac{1}{\delta}+\gamma_2\right)\left\norm{\textstyle\dfrac{\v}{\nu}\right}{\mesh_\ast,1/2}^2.
\end{split}
\end{equation}
With the fact that $h_\sigma\leq\cshape h_{\sigma_\ast}$, with $\sigma\in\bedges_\mesh$ and $\sigma_\ast\in\bedges_{\mesh_\ast}$, we infer that $\norm{\v}{3/2,\mesh_\ast}\leq\cshape^{-1}\norm{\v}{\mesh,3/2}$ and $\norm{\dfrac{\v}{\nu}}{1/2,\mesh_\ast}\leq\cshape^{-1}\norm{\dfrac{\v}{\nu}}{1/2,\mesh}$.
\begin{equation}
\left(\textstyle\frac{1}{\delta}+\gamma_1\right)\left(\norm{\v}{\mesh_\ast,3/2}^2
+\norm{\v}{\mesh_\ast,3/2}^2\right)\leq\frac{\Ccomp\gamma_1}{\delta}\norm{\v}{\mesh,3/2}^2,
\end{equation}
where $\Ccomp>0$ is an appropriate proportionality parameter that depends on $\cshape$.
A similar argument holds for terms including boundary norms of $\dfrac{\v}{\nu}$.
\end{proof}
\begin{lemma}[Comparison of solutions]\label{lem:cos}
Let $\mesh$ and $\mesh_\ast$ be successive admissible partitions obtained by $\ms{REFINE}$
and let $\U\in\Xp$ and $\U_\ast\in\X_{\mesh_\ast}$ be the finite-element spline solutions to \eqref{eq:ndp}.
Then we have for any $\eps>0$
\begin{equation}\label{eq:result:lem:cos}
\begin{split}
\app(\epp,\epp)\leq&(1+\eps)\ap(\ep,\ep)
-\frac{\Ccoer}{2}\enorm{\upp-\up}{\mesh_\ast}^2\\
&+\frac{8C_\rm{Comp}}{\eps\min\{\gamma_1,\gamma_2\}}\eta_\mesh^2
+\frac{2\norm{\Epp}{H^{-2}(\Omega)}^2}{\Ccoer}
\end{split}
\end{equation}
\end{lemma}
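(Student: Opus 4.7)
The plan is to produce an algebraic identity for $\app(\epp,\epp)$ by exploiting the nestedness $\Xp\subseteq\Xpp$ together with the inconsistency relation of Lemma~\ref{lem:ac}, then transfer the $\app$ form on the coarser error to the $\ap$ form via Lemma~\ref{lem:MP}, and finally absorb the residual stabilization-weighted boundary seminorms of $\U$ into the a posteriori estimator through Lemma~\ref{lem:boundarycontrol}.

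First I would set $w:=\upp-\up$, which by nestedness belongs to $\Xpp$. Writing $\epp=\ep-w$ and expanding bilinearly yields
\begin{equation*}
\app(\epp,\epp)=\app(\ep,\ep)-2\,\app(\ep,w)+\app(w,w).
\end{equation*}
Testing the discrete formulation \eqref{eq:ndp} on $\mesh_\ast$ with $w\in\Xpp$ gives $\app(\upp,w)=\ellf(w)$, while Lemma~\ref{lem:ac} furnishes $\app(\u,w)=\ellf(w)+\inner{\Epp,w}$; subtracting produces the partial-orthogonality relation $\app(\epp,w)=\inner{\Epp,w}$. Substituting $\ep=\epp+w$ into the middle term collapses the expansion to the clean identity
\begin{equation*}
\app(\epp,\epp)=\app(\ep,\ep)-\app(w,w)-2\inner{\Epp,w}.
\end{equation*}

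Next I would bound $\app(w,w)\geq\Ccoer\enorm{w}{\mesh_\ast}^2$ by coercivity (Lemma~\ref{lem:csvnbf}), and control $2\inner{\Epp,w}$ via the duality pairing together with the norm equivalence of Lemma~\ref{lem:normequiv}. An application of Young's inequality with parameter $\Ccoer/2$ absorbs half of the coercive quadratic and leaves
\begin{equation*}
-\app(w,w)-2\inner{\Epp,w}\leq-\tfrac{\Ccoer}{2}\enorm{w}{\mesh_\ast}^2+\tfrac{2}{\Ccoer}\norm{\Epp}{H^{-2}(\Omega)}^2,
\end{equation*}
matching two of the terms in the target inequality. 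To handle $\app(\ep,\ep)$ I would apply Lemma~\ref{lem:MP} with $\delta=\eps/(4\Ccoer)$, producing the factor $(1+\eps)\ap(\ep,\ep)$ and a remainder proportional to $\gamma_1\norm{\ep}{3/2,\mesh}^2+\gamma_2\norm{\dfrac{\ep}{\nu}}{1/2,\mesh}^2$. Since $\u$ has vanishing Dirichlet and Neumann traces on $\Gamma$, these boundary seminorms of $\ep$ coincide with those of $\up=\U$, and Lemma~\ref{lem:boundarycontrol} trades them for a multiple of $\eta_\mesh^2$, delivering the coefficient announced in the statement.

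The main obstacle lies in coordinating the constants across the absorption steps. The Young-inequality parameter is pinned by the demand that exactly $-\Ccoer/2$ survive in front of $\enorm{\upp-\up}{\mesh_\ast}^2$, which in turn forces the $2/\Ccoer$ weight on $\norm{\Epp}{H^{-2}(\Omega)}^2$. Meanwhile, Lemma~\ref{lem:boundarycontrol} demands stabilization parameters $\gamma_1,\gamma_2$ large enough that the defect $C_\rm{R}$ does not swamp $\gamma_i$, and the statement's $1/\min\{\gamma_1,\gamma_2\}$ scaling records this dependence after tracking $1/\delta=4\Ccoer/\eps$ through Lemma~\ref{lem:MP}. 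Conceptually, the estimate is a Nitsche-adapted quasi-Pythagoras substitute for the conforming identity of Lemma~\ref{lem:gp}: the perturbed orthogonality $\app(\epp,w)=\inner{\Epp,w}$ pays the price $\tfrac{2}{\Ccoer}\norm{\Epp}{H^{-2}(\Omega)}^2$, which would collapse to zero in the consistent setting.
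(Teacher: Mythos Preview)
Your proposal is correct and follows essentially the same route as the paper: both derive the identity $\app(\epp,\epp)=\app(\ep,\ep)-\app(\upp-\up,\upp-\up)-2\inner{\Epp,\upp-\up}$ from the partial orthogonality $\app(\epp,\upp-\up)=\inner{\Epp,\upp-\up}$, then invoke Lemma~\ref{lem:MP} on the first term, coercivity plus Young's inequality on the last two, and Lemma~\ref{lem:boundarycontrol} (using that $\u$ has vanishing traces) to absorb the boundary seminorms into $\eta_\mesh^2$. Your derivation of the key identity via the direct expansion $\epp=\ep-w$ is in fact a bit cleaner than the paper's three-term decomposition, but the substance is identical.
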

\begin{proof}
We begin from an analogous expression to \eqref{eq1:lem:gp}.
\begin{equation}
\begin{split}
\app(\u-\upp,\u-\upp)&=\app(\u-\upp,\u-\up+\up-\upp)\\
&=\app(\u-\up,\u-\up)+\app(\up-\upp,\u-\up)+\app(\u-\upp,\up-\upp)
\end{split}
\end{equation}
In view of Lemma \ref{lem:MP}, the first term
\begin{equation}
\begin{split}
\app(\u-\up,\u-\up)\leq&(1+4\delta\Ccoer)\ap(\u-\up,\u-\up)\\
&+\frac{\Ccomp}{\delta}\left(\gamma_1\norm{\up}{3/2,\mesh}^2+\gamma_2\left\norm{\textstyle\dfrac{\up}{\nu}\right}{1/2,\mesh}^2\right).
\end{split}
\end{equation}
In view of inconsistency, the third term
\begin{equation}
\app(\u-\upp,\up-\upp)=\inner{\Epp,\up-\upp}
\end{equation}
The second term reads, thanks to the symmetry of the bilinear form,
\begin{equation}
\begin{split}
\app(\up-\upp,\u-\up)&=\app(\up-\upp,\upp-\up)+\app(\up-\upp,\u-\upp)\\
&=-\app(\upp-\up,\upp-\up)+\inner{\Epp,\up-\upp}
\end{split}
\end{equation}
We arrive at
\begin{equation}
\begin{split}
\app(\u-\upp,\u-\upp)\leq&(1+4\delta\Ccoer)\ap(\u-\up,\u-\up)-\app(\upp-\up,\upp-\up)\\
&+2\inner{\Epp,\up-\upp}
+\frac{\Ccomp}{\delta}\left(\gamma_1\norm{\up}{3/2,\mesh}^2+\gamma_2\left\norm{\textstyle\dfrac{\up}{\nu}\right}{1/2,\mesh}^2\right).
\end{split}
\end{equation}
An application of Young's inequality to
\begin{equation}
2\inner{\Epp,\up-\upp}
\leq\frac{2\norm{\Epp}{H^{-2}(\Omega)}^2}{\Ccoer}+\frac{\Ccoer}{2}\enorm{\up-\upp}{\mesh}^2
\end{equation}
and in view of Lemma \ref{lem:boundarycontrol} we obtain
\begin{equation}
\begin{split}
\app(\epp,\epp)&\leq(1+4\delta\Ccoer)\ap(\ep,\ep)
-\frac{\Ccoer}{2}\enorm{\upp-\up}{\mesh_\ast}^2\\
&+\frac{C_\rm{Comp}}{\delta\min\{\gamma_1,\gamma_2\}}\eta_\mesh^2
+\frac{2\norm{\Epp}{H^{-2}(\Omega)}^2}{\Ccoer}
\end{split}
\end{equation}
Given $\delta>0$ pick $\eps=4\delta\Ccoer$.
\end{proof}

\begin{theorem}[Convergence of Nitsche's AFEM]\label{thm:nc}
For a contractive factor $\alpha\in(0,1)$ and a constant $\Cest>0$, given any successive mesh partitions $\mesh$ and $\mesh_\ast$ satisfying conditions \eqref{eq:sr}, $f\in L^2(\Omega)$ and Dolfer parameter $\theta\in(0,1]$, the adaptive procedure $\mathbf{AFEM}\,[\mesh,f,\theta]$ with produce two successive solutions $\U\in\Xp$ and $\U_\ast\in\Xpp$ to problem \eqref{eq:ndp} for which
\begin{equation}\label{eq:result:thm:ConvNit}
\end{equation}
\end{theorem}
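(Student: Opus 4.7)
The plan is to adapt the contraction argument of Theorem~\ref{thm:ConvConf} to Nitsche's setting, replacing Galerkin orthogonality with the quasi-Pythagorean estimate of Lemma~\ref{lem:cos} and carrying the inconsistency functionals $\Ep$, $\Epp$ as additive perturbations. The target inequality should read, for a suitable constant $\Cincon>0$,
\begin{equation*}
\app(\u-\U_\ast,\u-\U_\ast) + \Cest\,\eta_{\mesh_\ast}^2(\U_\ast,\Omega)
\leq \alpha\bigl(\ap(\u-\U,\u-\U) + \Cest\,\eta_\mesh^2(\U,\Omega)\bigr)
+ \Cincon\bigl(\norm{\Ep}{H^{-2}(\Omega)}^2 + \norm{\Epp}{H^{-2}(\Omega)}^2\bigr),
\end{equation*}
which, combined with the algebraic decay $\norm{\Ep}{H^{-2}(\Omega)}\lapprox h_\mesh^s\norm{\Lap\u}{H^s(\Omega)}$ from Lemma~\ref{lem:AsymptoticConsistency}, delivers convergence of the combined error-plus-estimator quantity up to a vanishing perturbation.

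The proof would proceed as follows. First, apply Lemma~\ref{lem:cos} to bound $\app(\epp,\epp)$ by $(1+\eps)\ap(\ep,\ep)$ minus $\tfrac{\Ccoer}{2}\enorm{\upp-\up}{\mesh_\ast}^2$, plus an $\eps^{-1}$-scaled multiple of $\eta_\mesh^2(\U,\Omega)$ (since Lemma~\ref{lem:boundarycontrol} controls the boundary norms by the estimator) and the inconsistency $\tfrac{2}{\Ccoer}\norm{\Epp}{H^{-2}(\Omega)}^2$. Second, invoke Lemma~\ref{lem:err} with $\V=\U$ and $\V_\ast=\U_\ast$; the D\"orfler selection \eqref{markingstrategy} gives $\eta_\mesh^2(\U,\marked)\ge\theta^2\eta_\mesh^2(\U,\Omega)$, yielding $\eta_{\mesh_\ast}^2(\U_\ast,\Omega) \leq \qest\,\eta_\mesh^2(\U,\Omega) + \cshape(1+\tfrac{1}{\delta})\enorm{\U-\U_\ast}{}^2$ with $\qest=(1+\delta)(1-\tfrac{\theta^2}{2})$. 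Third, add the two inequalities with weight $\Cest$ on the estimator-reduction bound, and exploit $\enorm{\cdot}{}\leq\enorm{\cdot}{\mesh_\ast}$ together with a small enough $\Cest$ to absorb the positive $\Cest\cshape(1+\tfrac{1}{\delta})\enorm{\U-\U_\ast}{\mesh_\ast}^2$ term into the negative $-\tfrac{\Ccoer}{2}\enorm{\upp-\up}{\mesh_\ast}^2$ supplied by Lemma~\ref{lem:cos}. Fourth, split $(1+\eps)\ap(\ep,\ep) = (1+\eps-\beta)\ap(\ep,\ep) + \beta\,\ap(\ep,\ep)$ and apply the reliability half of Corollary~\ref{cor:aposteriori} to the $\beta$-part, exchanging it for $\beta\Crel\,\eta_\mesh^2(\U,\Omega) + \beta\norm{\Ep}{H^{-2}(\Omega)}^2$. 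Finally, take $\beta = 1+\eps-\alpha$ so that the $\ap(\ep,\ep)$ prefactor is exactly $\alpha$, and then choose $\Cest$ large enough to satisfy $\beta\Crel + C_1/\eps + \Cest\,\qest \leq \alpha\,\Cest$, which is feasible whenever $\alpha\in(\qest,1)$.

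The principal obstacle is the inconsistency: unlike the conforming case in Theorem~\ref{thm:ConvConf}, the additive terms $\norm{\Ep}{H^{-2}(\Omega)}^2$ and $\norm{\Epp}{H^{-2}(\Omega)}^2$ cannot be absorbed into the contractive bracket, and the best achievable statement is a quasi-contraction modulo a perturbation. Because Lemma~\ref{lem:AsymptoticConsistency} gives algebraic decay of these perturbations and successive refinements drive $h_\mesh\to 0$, iterating the quasi-contraction still forces the combined quantity to zero, in the spirit of the analogous argument for ADFEM in \cite{bonito2010quasi}. A secondary technicality is the simultaneous tuning of the three small parameters $\eps$, $\delta$, and $\Cest$ together with the contraction factor $\alpha$: one must verify that the feasibility window $\qest<\alpha<1$ remains non-empty as $\eps\to 0^+$, which holds since $\qest\to 1-\tfrac{\theta^2}{2}<1$ continuously as $\delta\to 0^+$, so any $\alpha$ strictly between $1-\tfrac{\theta^2}{2}$ and $1$ admits a compatible choice of the remaining parameters.
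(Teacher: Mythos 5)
Your proposal follows essentially the same route as the paper: Lemma~\ref{lem:cos} in place of Galerkin--Pythagoras, Lemma~\ref{lem:err} with D\"orfler marking for the estimator reduction, the reliability bound of Corollary~\ref{cor:aposteriori} to trade a fraction of the error for the estimator, and a final tuning of $\eps$, $\delta$, $\Cest$ and the stabilization parameters, with the inconsistency terms carried as additive perturbations that vanish by Lemma~\ref{lem:AsymptoticConsistency}. The only notable (and welcome) deviation is that you explicitly require $\Cest\cshape(1+\tfrac{1}{\delta})\leq\tfrac{\Ccoer}{2}$ to absorb the $\enorm{\U-\U_\ast}{\mesh_\ast}^2$ term, a point the paper's proof passes over silently.
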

\begin{proof}
Adopt the following abbreviations:
\begin{eqnarray}
\a_\mesh=\a_\mesh(\u-\U,\u-\U),&E_\ast=\enorm{\U-\U_\ast}{\mesh_\ast},\\
\eta_\mesh=\eta_{\mesh}(\U,\mesh),&\eta_\mesh(\marked)=\eta_{\mesh}(\U,\marked).
\end{eqnarray}
Let $\gamma=\min\{\gamma_1,\gamma_2\}$.
In view of Lemma \ref{lem:cos},
\begin{equation}\label{eq1:thm:nc}
\app+\Cest\etapp^2\leq(1+\eps)\ap-{\textstyle\frac{\Ccoer}{2}}E_\ast^2
+\textstyle{\frac{\Ccomp}{\eps\gamma}}\etap^2+{\textstyle\frac{2}{\Ccoer}}\norm{\Epp}{H^{-2}(\Omega)}^2+\Cest\etapp^2.
\end{equation}
By invoking Lemma \ref{lem:err} on $\Cest\etapp^2$
\begin{equation}
\begin{split}
\app+\Cest\etapp^2&\leq(1+\eps)\ap
+\textstyle{\frac{\Ccomp}{\eps\gamma}}\etap^2+{\textstyle\frac{2}{\Ccoer}}\norm{\Epp}{H^{-2}(\Omega)}^2\\
&\quad+\Cest\left[(1+\delta)\left\{\etap^2-{\textstyle\frac{1}{2}}\etap^2(\marked)\right\}+\Cest^{-1}E_\ast^2\right]
\end{split}
\end{equation}
and eliminate $E_\ast$ from the previous expression.
From Dorler $-\etap^2(M)\leq\theta^2\etap^2$
and in view of Corollary \ref{cor:aposteriori},
\begin{equation}
\begin{split}
\Cest(1+\delta)\left\{\etap^2-{\textstyle\frac{1}{2}}\etap^2(\marked)\right\}
&\leq\Cest(1+\delta)\etap^2-\Cest(1+\delta){\textstyle\frac{\theta^2}{2}}\etap^2\\
&\leq\Cest(1+\delta)\etap^2-\Cest(1+\delta){\textstyle\frac{\theta^2}{2}}\left(\textstyle\frac{1}{2}\etap^2+\frac{1}{2\Crel}\ap\right)
\end{split}
\end{equation}
\eqref{eq1:thm:nc} now reads
\begin{equation}
\app+\Cest\etapp^2\leq\left(1+\eps-\Cest(1+\delta){\textstyle\frac{\theta^2}{4\Crel}}\right)\ap
+\left(\textstyle{\frac{\Ccomp}{\eps\gamma}}+\Cest(1+\delta)\left(1-\frac{\theta^2}{4}\right)\right)\etap^2+{\textstyle\frac{2}{\Ccoer}}\norm{\Epp}{H^{-2}(\Omega)}^2.
\end{equation}
Observing that $\Cest(1+\delta)=\delta\cshape^{-1}$ we arrive at
\begin{equation}
\app+\Cest\etapp^2\leq\left(1+\eps-\delta{\textstyle\frac{\theta^2}{4\cshape\Crel}}\right)\ap
+\Cest\left(\textstyle{\frac{\Ccomp}{\eps\gamma\Cest}}+(1+\delta)\left(1-\frac{\theta^2}{4}\right)\right)\etap^2+{\textstyle\frac{2}{\Ccoer}}\norm{\Epp}{H^{-2}(\Omega)}^2.
\end{equation}
It what remains we verify the existence of $\eps>0,\delta>0$ and $\gamma>0$ such that the quantities $1+\eps-\delta{\textstyle\frac{\theta^2}{4\cshape\Crel}}$ and $(1+\delta)\left(1-\frac{\theta^2}{4}\right)$ are strictly positive and less that $1$.
Let $\Lambda_1=(\cshape\Crel)^{-1}$ and $\Lambda_2=\Ccomp\cshape$.
Then the corresponding conditions will read
\begin{equation}\label{eq1:lem:nc}
\textstyle
0<1+\eps-\delta\frac{\theta^2}{4}\Lambda_1<1
\quad\text{and}\quad
\frac{\Lambda_2}{\delta\eps\gamma}<\frac{\theta^2}{4}-\frac{1}{1+\frac{1}{\delta}}.
\end{equation}
Choosing $\eps=\delta\frac{\theta^2}{8}\Lambda_1$ and $\delta<\min\{1/(\frac{4}{\theta^2}-1),\frac{8}{\theta^2\Lambda_1}\}$ ensures that $1/{(1+\frac{1}{\delta}})<\frac{\theta^2}{4}$ and
\begin{equation}
\textstyle
0<1+\eps-\delta\frac{\theta^2}{4}\Lambda_1=1-\eps<1,
\end{equation}
then pick $\gamma>0$ sufficiently large to obtain the second relation in \eqref{eq1:lem:nc}.
\end{proof}

\section{Acknowledgements}
We thank Emmanuil Georgoulis for discussion about dG methods and his invaluable advice.
\bibliography{publications}
\bibliographystyle{siam}

\end{document}